\newcounter{Change}%
\newcounter{nomcount}%
\newcommand{\nomentry}[2]{\stepcounter{nomcount}\nomenclature[\two@digits\thenomcount]{#1}{#2}}%
\renewcommand\section{\@startsection
	{section}
{1}
{0pt}
{-3.5ex plus -1ex minus -.2ex}
{2.3ex plus.2ex}
{\centering\normalfont\Large\scshape}}
\renewcommand\subsection{\@startsection
	{subsection}
{2}
{0pt}
{-3ex plus -1ex minus -.2ex}
{1ex plus.2ex}
{\normalfont\large\bfseries}}
\renewcommand\subsubsection{\@startsection
	{subsubsection}
{3}
{0pt}
{-1.5ex plus -1ex minus -.2ex}
{0.8ex plus .2ex}
{\normalfont\bfseries}}
\renewcommand\paragraph{\@startsection
	{paragraph}
{4}
{0em}
{-1.2ex plus -0.4ex minus -.2ex}
{0ex}
{\normalfont\bfseries}}
\newcommand\A{\mathcal A}%
\newcommand\B{\mathcal B}%
\newcommand\Z{\mathbb Z}%
\newcommand\N{\mathbb N}%
\newcommand\R{\mathbb R}%
\newcommand\Nb{\mathcal N}%
\newcommand\M{\mathcal M}%
\newcommand\Ball{\mathbf B}%
\newcommand\F{F_\ast}%
\newcommand\V{\mathcal V}%
\renewcommand\P{\mathcal P}%
\renewcommand\L{\mathcal L}%
\newcommand\s\sigma%
\newcommand\az{\A^\Z}%
\newcommand\bz{\B^\Z}%
\newcommand\dm{d_\M}%
\newcommand\ud{\mathrm d}%
\newcommand\sk\bigskip%
\newcommand\per[1]{\mathstrut^\infty#1^\infty}%
\newcommand\tx{\text}%
\newcommand\define\textit%
\newcommand\Ba{\mathfrak{B}}%
\newcommand\Ms{\M_{\sigma}}%
\newcommand\Merg{\mathcal{M}_{\s-\textrm{erg}}}%
\newcommand\Msaz{\Ms(\az)}%
\newcommand\meas[1]{\widehat{\delta_{#1}}}%
\newcommand\Ber{\mathrm {Ber}}%
\newcommand\card{\textrm{Card}}%
\newcommand\freq{\textrm{Freq}}%
\newcommand\supp{\mathrm{supp}}%
\newcommand\argmin[1]{\underset{#1}{\mbox{argmin }}}%
\newcommand\p{\textrm{Part}}%
\newcommand\prog{\textrm{Prog}}%
\newcommand\inter{\textrm{Inter}}%
\newcommand{\band}{\mbox{ and }}
\newcommand{\bor}{\mbox{ or }}
\newcommand\D{\mathcal D}
\newcommand\aarz{(\A^{\A^\Nb})^\Z}
\theoremstyle{definition}
\newtheorem{definition}{Definition}
\theoremstyle{plain}
\newtheorem{lemma}{Lemma}
\newtheorem{theorem}{Theorem}
\newtheorem{proposition}{Proposition}
\newtheorem{corollary}{Corollary}
\newcounter{claimcount}[theorem]
\newcommand{\bclaimprf}[1][Proof.~]{\begin{list}{}{\setlength{\leftmargin}{0.5em}
\setlength{\rightmargin}{2em}\setlength{\listparindent}{1em}}\item
{\em #1}}
\newcommand{\eclaimprf}{ \hfill $\Diamond$~{\scriptsize {\tt Claim~\theclaimcount}}\end{list}} 
\title[Self-organisation in cellular automata with coalescent particles: Qualitative and quantitative approaches]{Self-organisation in cellular automata with coalescent particles: Qualitative and quantitative approaches}
\author{Benjamin Hellouin de Menibus}
\address{Andrés Bello University and Centro de Modelamiento Matemático (Universidad de Chile), Santiago. }
\email{benjamin.hellouin@gmail.com}
\urladdr{http://mat-unab.cl/~hellouin/}
\author{Mathieu Sablik}
\address{Aix Marseille Université, CNRS, Centrale Marseille, I2M UMR 7373\\
13453, Marseille, France}
\email{sablik@univ-amu.fr}
\urladdr{http://www.i2m.univ-amu.fr/~sablik/}
\keywords{Cellular automata, Particles, limit measures, Brownian motion}
\date{}
\begin{document}

\begin{abstract}
This article introduces new tools to study self-organisation in a family of simple cellular automata which contain some particle-like objects with good collision properties (coalescence) in their time evolution. We draw an initial configuration at random according to some initial shift-ergodic measure, and use the limit measure to describe the asymptotic behaviour of the automata.  

We first take a qualitative approach, i.e. we obtain information on the limit measure(s). We prove that only particles moving in one particular direction can persist asymptotically. This provides some previously unknown information on the limit measures of various deterministic and probabilistic cellular automata: $3$ and $4$-cyclic cellular automata (introduced by Fisch, 1990), one-sided captive cellular automata (introduced by Theyssier, 2004), the majority-traffic cellular automaton, a self stabilisation process towards a discrete line (introduced by Regnault and Remila, 2015)\dots 

In a second time we restrict our study to to a subclass, the gliders cellular automata. For this class we show quantitative results, consisting in the asymptotic law of some parameters: the entry times (generalising~\cite{EntryTimes}), the density of particles and the rate of convergence to the limit measure.
\end{abstract}

\maketitle

 \section{Introduction} 

A cellular automaton is a complex system defined by a local rule which acts synchronously and uniformly on a configuration space. These simple models exhibit a wide variety of dynamical behaviour and even in the one-dimensional case (the focus of this article) they are not completely understood. Formally, given a finite alphabet $\A$, a configuration is an element of the set $\az$. This set is compact for the product topology. A \define{cellular automaton} $F:\az\to\az$ is defined by a local function $f:\A^{[-r,r]}\to \A$, for some \emph{radius} $r>0$, which acts synchronously and uniformly on every cell of the configuration:
\[F(x)_i=f(x_{[i-r,i+r]})\textrm{ for all }x\in\az\textrm{ and }i\in\Z.\]
Equivalently, cellular automata can be defined as continuous functions that commute with the \emph{shift map} $\sigma$ defined by $\sigma(x)_i=x_{i+1}$ for all $x\in\az$ and $i\in\Z$.

Even though cellular automata have been introduced by J. Von Neumann~\cite{VonNeumann}, the impulsion for their systematic study was given by the work of S. Wolfram~\cite{Wolfram84}. He instigated a systematic study of \emph{elementary} cellular automata, which are the cellular automata defined on the alphabet $\{0,1\}$ with radius $1$ (there are $2^{2^3}=256$ such cellular automata; to each of them we associate a number $\# n$). In particular he proposed a classification according to the observation of the space-time diagrams produced by the time evolution of cellular automata starting from a random configuration. 

One of these classes corresponds to a particular form of self-organisation: from a random configuration, after a short transitional regime, regions consisting in a simple repeated pattern emerge and grow in size, while the boundaries between them persist under the action of the cellular automaton and can be followed from an instant to the next. Therefore their movement (time evolution) can be defined inductively, and in this case we call these boundaries \define{particles}. In the simplest case, these particles evolve at constant speed and are annihilated when colliding with other particles; however, they can sometimes exhibit a periodic behaviour or even perform a random walk, and the collisions may give birth to new particles following some more or less complicated rules.

This type of behaviour was first observed empirically in elementary cellular automata \#18, \#122, \#126, \#146, and \#182 \cite{Grassb, Grassa}, then \#54, \#62, \#184 \cite{Boccara}, etc. The interest for these automata stems from their dynamics that appeared neither too simple nor too chaotic, giving hope to better understand their underlying structure. In Figure~\ref{fig:particles}, we show the space-time diagrams of a sample of such automata iterated on a (uniform) random configuration.

Roughly speaking, studying particles in cellular automata requires two steps:
\begin{itemize}
 \itemsep0em
 \item identifying and describing the particles, usually as finite words;
 \item describing the particle dynamics and understanding its effect on the properties of the CA.
\end{itemize}

Historically, this study was often performed on individual or small groups of similar-looking CA, and the first step was done in a case-by-case manner. See for example \cite{Fisch, Fisch-2} for the 3-state cyclic automaton, \cite{BelitskyFerrari-1995, Belitsky-2005} for Rule $\#184$ and other automata with similar dynamics, \cite{Grassa, Eloranta-1992} for Rule $\#18$\dots Other works such as \cite{Eloranta} skip the first step and study particle dynamics in an abstract manner, deducing dynamical properties of automata by making assumptions on the dynamics of their particles. This approach was used successfully on probabilistic cellular automata simulating traffic jams, generalising standard stochastic processes such as the TASEP~\cite{GrayGriffeath}.

The first general formalism of particles in cellular automata was introduced by M. Pivato: homogeneous regions correspond to words from a subshift $\Sigma$ and particles are defects in a configuration of $\Sigma$. He developed some invariants to characterise the persistence of a defect~\cite{Pivato-2007-algebra, Pivato-2007-spectral} and he described the different possible dynamics of propagation of a defect~\cite{Pivato-2007-defect}.\sk


In the present work we focus on the second step first. More precisely, we are interested in how the existence of some particle set with good dynamical properties affects the typical asymptotic behaviour. Then we apply this general framework to a variety of examples, finding the sets of particles by Pivato's methods or otherwise, to explain the self-organisation that is observed experimentally. 

Let us define more formally what we mean by typical asymptotic behaviour. Starting from a $\s$-invariant measure $\mu\in\Ms(\az)$ (i.e. $\mu(\s^{-1}(U))=\mu(U)$ for all Borel set $U$), we consider the iteration of a cellular automaton $F$ on this measure:
\[
\begin{array}{rcccl}
\F:&\Ms(\az)&\longrightarrow&\Ms(\az)&\\
&\mu&\longmapsto &\F\mu&\textrm{ where }\F\mu(U)=\mu(F^{-1}(U))\textrm{ for all Borel }U.
\end{array}
\]

We then study the asymptotic properties of the sequence $(\F^t\mu)_{t\in\N}$, and particularly the set of cluster points called the \define{$\mu$-limit measures set}. Sometimes, we are only able to provide information on the $\mu$-limit set, introduced in~\cite{Kurka-Maass-2000}, which is the union of the supports of all limit measures. Equivalently, it is the set of configurations containing only patterns whose probability to appear in the space-time diagram does not tend to zero as time tends to infinity. 

When studying typical asymptotic behaviour in this sense, it is unreasonable to expect a general result since a wide variety of limit measures can be reached in the general case~\cite{Hellouin-Sablik-2013} and any nontrivial property of the $\mu$-limit set is undecidable \cite{Delacourt-2011}. That is why we consider restricted cases for the dynamics of the particles. To determine the $\mu$-limit set in some cases, P. K\r{u}rka suggests an approach based on particle weight function which assigns weights to certain words~\cite{Kurka-2003}. However, this method does not cover any case when a defect can remain in the $\mu$-limit set. Hence we aim at a more general approach, in terms of particle dynamics as well as initial measures.

One of our main motivations for this study is the class of captive cellular automata, where the local rule cannot make a colour appear if it is not already present in the neighbourhood. These automata were introduced by G. Theyssier in \cite{Theyssier-2004} for their algebraic properties, but he also noticed an interesting phenomenon: when drawing a captive cellular automaton at random (fixed alphabet and neighbourhood), most captive automata exhibited the type of self-organised behaviour described above. Any kind of general result regarding self-organisation of captive cellular automata remains a challenging open problem.

\sk 

This article is divided into two main sections, corresponding to improved versions of results previously published in conferences~\cite{Hellouin-Sablik-2011,Hellouin-Sablik-2012}. In Section~\ref{sec:particles}, we present a qualitative result generalised from~\cite{Hellouin-Sablik-2011} with an improved formalism, shorter proofs and a new application to probabilistic cellular automata (Section~\ref{sec:probabilist}). Then, in Section~\ref{sec:brown}, we refine our approach on a subclass to obtain some quantitative results. Sections~\ref{sec:walks} to \ref{sec:brownian} were published in \cite{Hellouin-Sablik-2012}; we correct some inaccuracies in the proofs and extend the study to other parameters.

\paragraph{Qualitative approach}\ In Section~\ref{sec:particles}, we prove a qualitative result: for any initial $\s$-ergodic measure $\mu$, assuming particles have good collision properties (coalescence), only particles moving in one particular direction can persist aymptotically. We introduce our own formalism of particle system in Section~\ref{sec:particlesdef} so as to be able to describe the dynamics of the particles, and Section~\ref{sec:selforg} is dedicated to the proof itself. Section~\ref{sec:defects} presents a simplified version of Pivato's formalism which is by far the simplest way to find such a particle system in most examples. 

We spend Section~\ref{sec:particlesexamples} on various examples of automata where this result can be applied:
\begin{description}
\item[Section~\ref{section.184}] we characterise the $\mu$-limit set of the ``traffic'' automaton (rule $\#184$), a simple case that may clarify the formalism. The results were known for initial Bernoulli measures~\cite{BelitskyFerrari-1995, Belitsky-2005} but our method applies for every $\s$-ergodic measure.
 \item[Section~\ref{section.cyclic}] we consider the family of $n$-cyclic cellular automata introduced in~\cite{Fisch-2,Fisch}. Using our method, we go further in the study of these simple automata: in particular, for $n=3$ or $4$, we show that the limit measure is unique and is a convex combination of Dirac measures supported by uniform configurations.
\item[Section~\ref{section.captive}] we characterise the $\mu$-limit set of all one-sided captive cellular automata. This is a first step to the study of asymptotic behaviour of captive cellular automata.
\item[Section~\ref{section.randomwalk}] last, we apply our formalism to a cellular automaton where the particles do not have a linear speed but instead perform random walks by drawing randomness from the initial measure. 
\end{description}

However, our results are not general enough to apply to defects of a sofic subshift that can have a particle-like behaviour, such as in Rule $\#18$ (see the bottom right picture in Figure~\ref{fig:particles} and \cite{Eloranta-1992}), or to more complicated particle systems such as those observed in general captive cellular automata.

Finally, in Section~\ref{sec:probabilist}, we generalise our method to probabilistic cellular automata. As an application, we partially describe limit measures of the probabilistic majority-traffic cellular automaton proposed by N. Fatès in \cite{Fates} as a candidate to solve the density classification problem. This complements the approach of \cite{Busic2013} which characterises invariant measures. Another application proposed in Section~\ref{section.Regnault-Remilia-line} presents a generalisation to the infinite line of a self stabilisation process toward a discrete line proposed in~\cite{Regnault-Remilia-2015}.

\paragraph{Quantitative approach}\ In Section~\ref{sec:brown}, we improve the previous qualitative results with a quantitative approach, considering the time evolution of some parameters when the particle dynamics are very simple. This research direction was inspired by \cite{EntryTimes}, where the authors consider the waiting time before a particle crosses the central column (called entry time). Using the same approach as in \cite{BelitskyFerrari-1995, Kurka-Maass-2000}, we show that the behaviour of these automata can be described by a random walk process (Section~\ref{sec:walks}), and we approximate this process by a Brownian motion using scale invariance (Section~\ref{sec:brownian}). Thanks to this tool, we answer negatively a conjecture proposed in~\cite{EntryTimes} by determining the correct asymptotic law for the entry time of a particle in the central column (Section~\ref{sec:entry}). We then use the same approach on various natural parameters such as the density of particles at time $t$ (Section~\ref{sec:density}) or the rate of convergence to the limit measure (Section~\ref{sec:rate}). This generalises some known results on initial Bernoulli measures from \cite{EntryTimes} and \cite{Belitsky-2005}, in particular relaxing the conditions on the initial measures. In Section~\ref{sec:extensions}, we exhibit various examples with similar dynamics on which these results apply.

In all the article, space-time diagrams were produced using the Sage mathematical software \cite{Sage} and follow the convention $\square = 0, \blacksquare = 1, \textcolor{red}{\blacksquare} = 2, \textcolor{blue}{\blacksquare} = 3 $.


\begin{figure}[!ht]
\begin{scriptsize}
\begin{center}
\begin{tabular}{p{0.5cm}p{0.5cm}|p{5cm}p{5cm}p{5cm}} 
&\multirow{4}{*}{\begin{sideways}{ Cases with quantitative results (Section~\ref{sec:brown})}\end{sideways} } &(-1,1)-gliders CA (Sec.~\ref{sec:walks})&Rule 184 or traffic rule (Sec.~\ref{section.184}) & 3-state cyclic CA (Sec.~\ref{section.cyclic})\\
\multirow{7}{*}{\begin{sideways}{\normalsize Cases with qualitative result (Section~\ref{sec:particles})}\end{sideways} } &&\includegraphics[width=5cm]{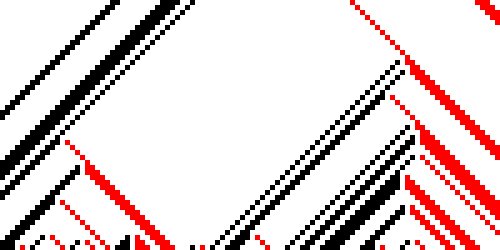}&\includegraphics[width=5cm]{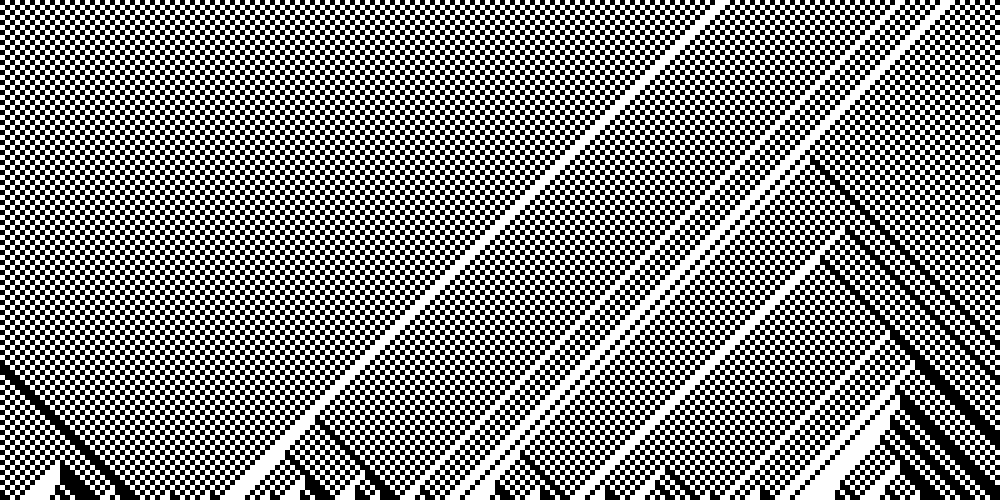}&\includegraphics[width=5cm]{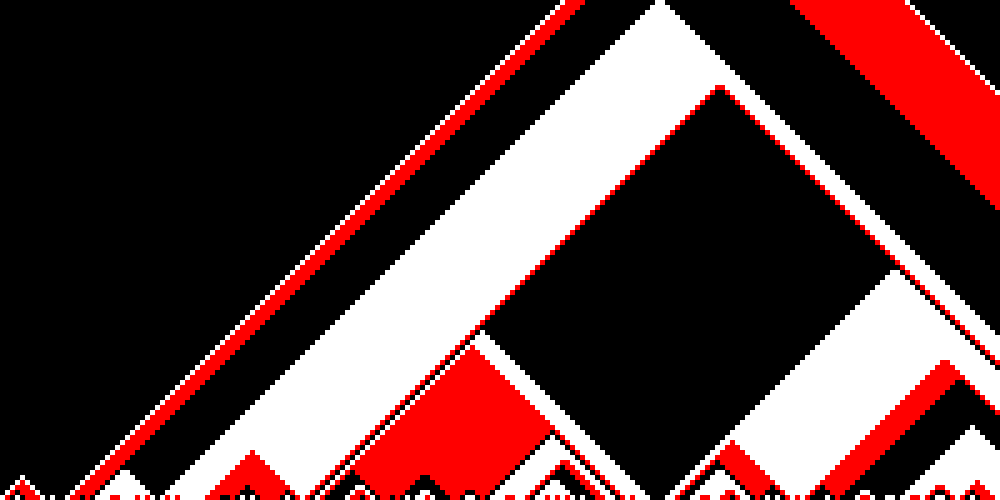}\\
&&(0,1)-gliders CA (Sec.~\ref{sec:walks})&One-sided captive CA (Sec.~\ref{section.captive})&One-sided captive CA (Sec.~\ref{section.captive})\\
&&\includegraphics[width=5cm, trim = 0 0 0 225,clip]{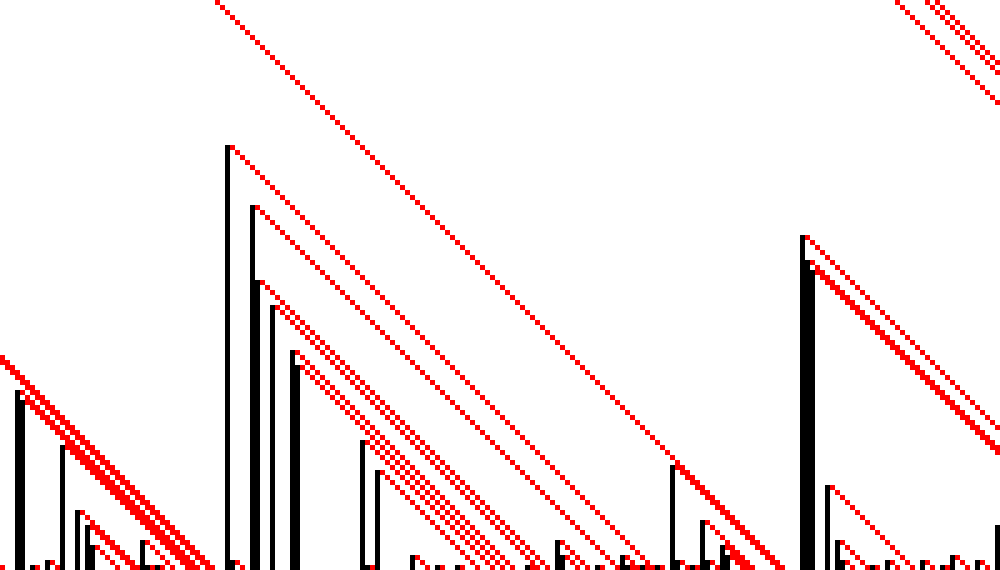}&\includegraphics[width=5cm]{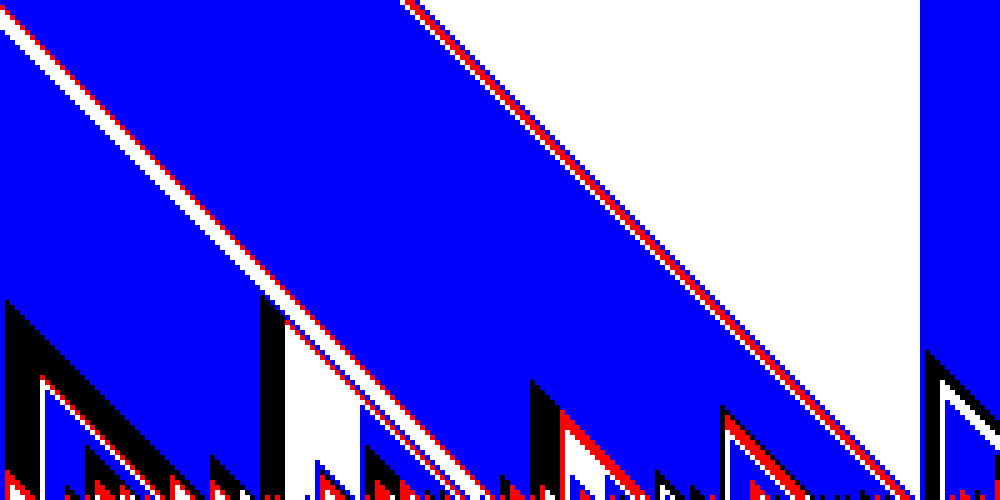}&\includegraphics[width=5cm]{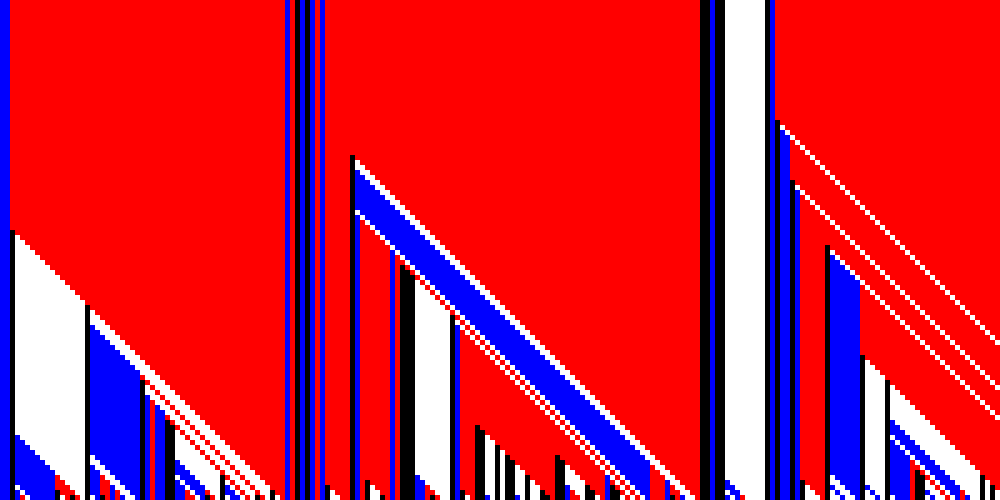}\\
\cline{2-5}
&& 4-state cyclic CA (Sec.~\ref{section.cyclic})&5-state cyclic CA (Sec.~\ref{section.cyclic})&Random walk CA (Sec.~\ref{section.randomwalk})\\
&&\includegraphics[width=5cm]{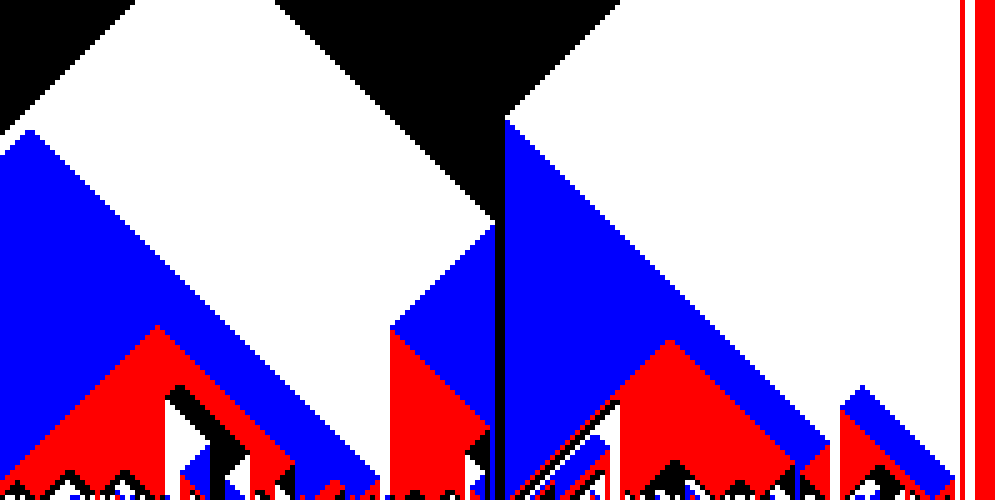}&\includegraphics[width=5cm]{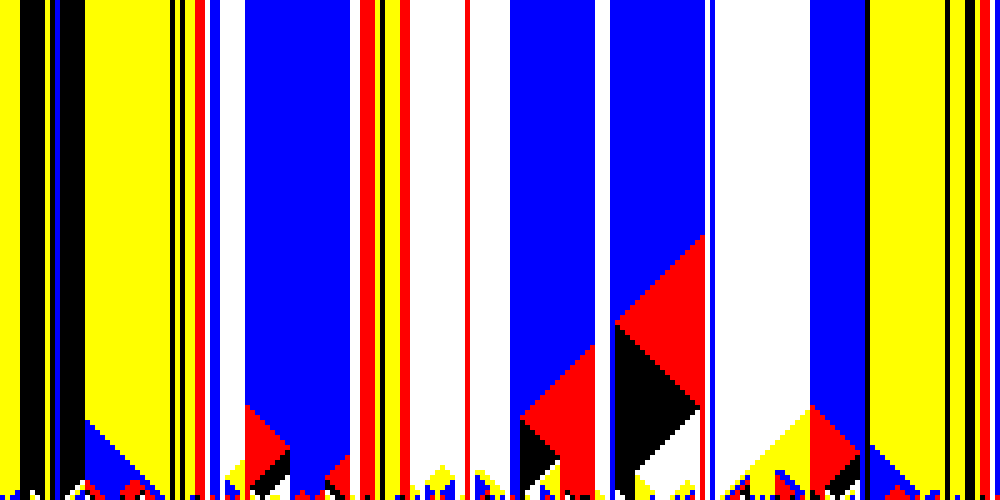}&\includegraphics[width=5cm]{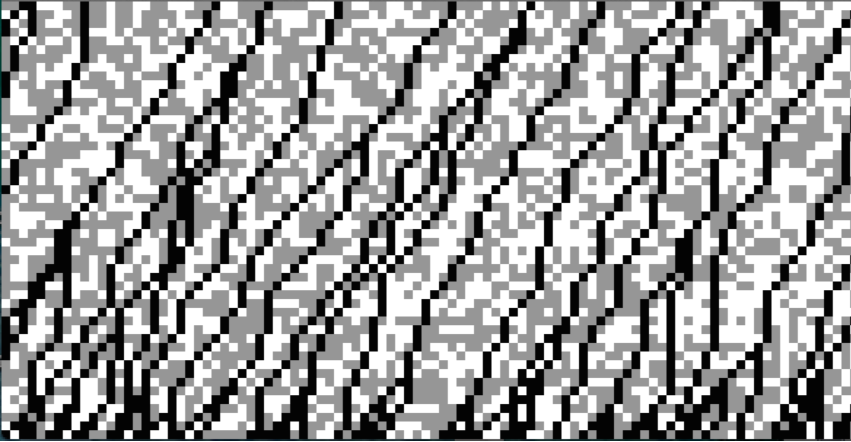}\\
\cline{2-5}
&\multirow{2}{*}{\begin{sideways}{ Prob. CA (Sec.~\ref{sec:probabilist})}\end{sideways} }&Majority-traffic PCA (Sec.~\ref{section.FatesDensity}) &Self-stabilisation of the line (Sec.~\ref{section.Regnault-Remilia-line})&Generic one-sided captive PCA\\
&&\includegraphics[width=5cm]{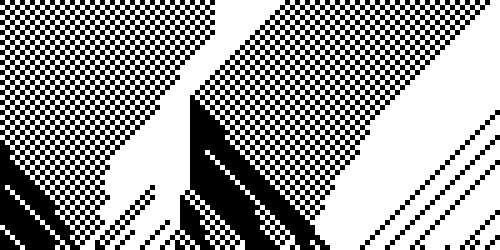}&\includegraphics[width=5cm]{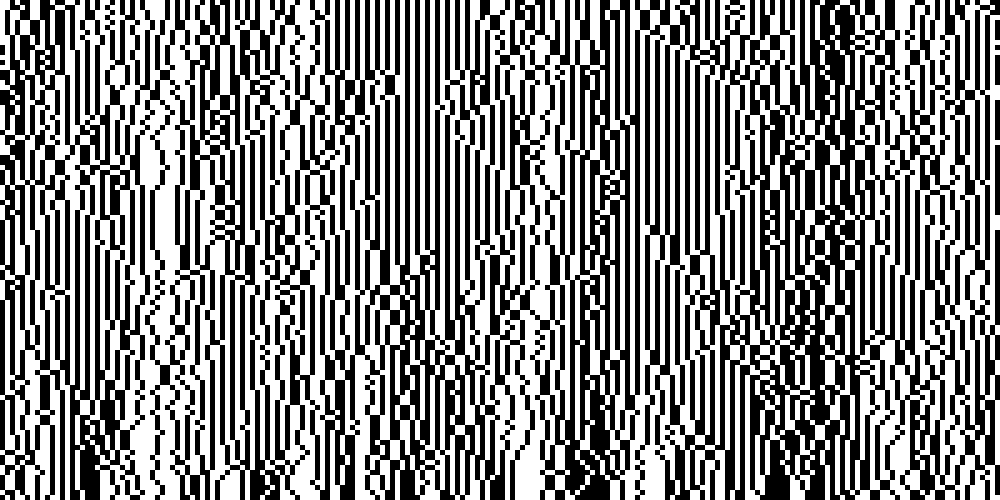}&\includegraphics[width=5cm]{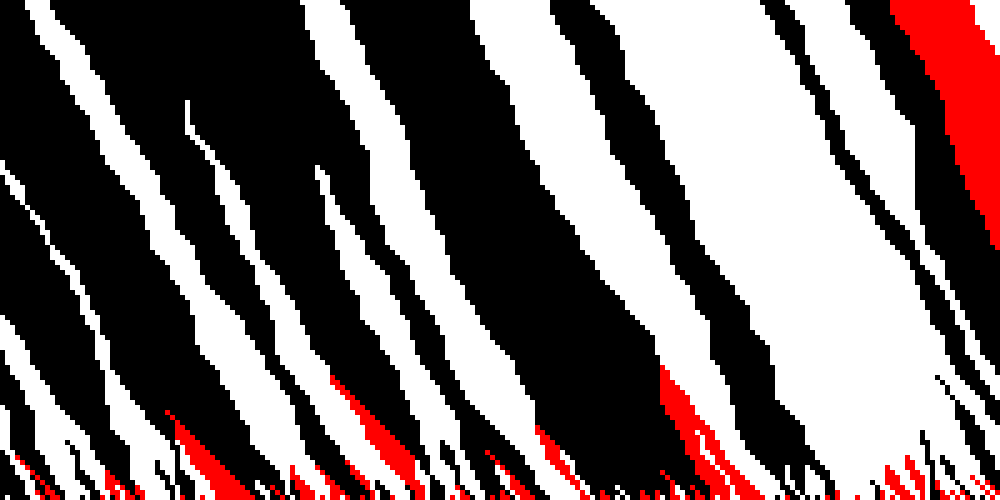}\\
\hline
\hline
&&Generic captive CA&Generic captive CA&Generic captive CA\\
\multirow{2}{*}{\begin{sideways}{\normalsize Unknown cases}\end{sideways} } &&\includegraphics[width=5cm, trim = 0 0 0 225,clip]{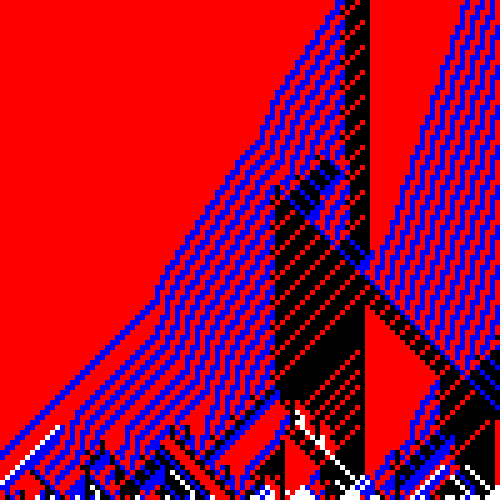}&
\includegraphics[width=5cm, trim = 0 0 0 225,clip]{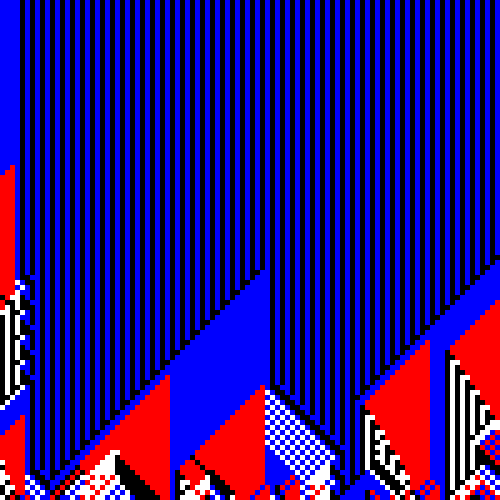}&
\includegraphics[width=5cm]{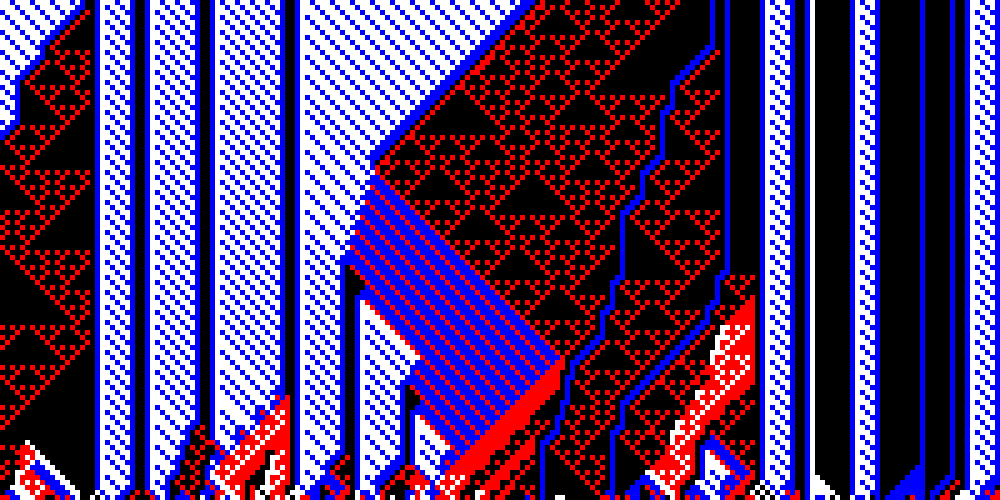}
\\
&&Generic captive CA&Generic captive CA&Rule 18\\
&&
\includegraphics[width=5cm]{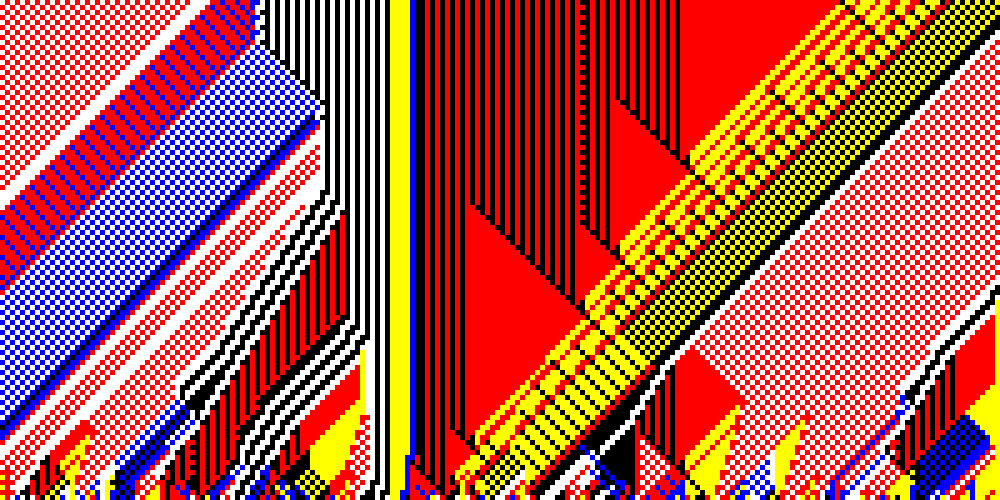}
&
\includegraphics[width=5cm]{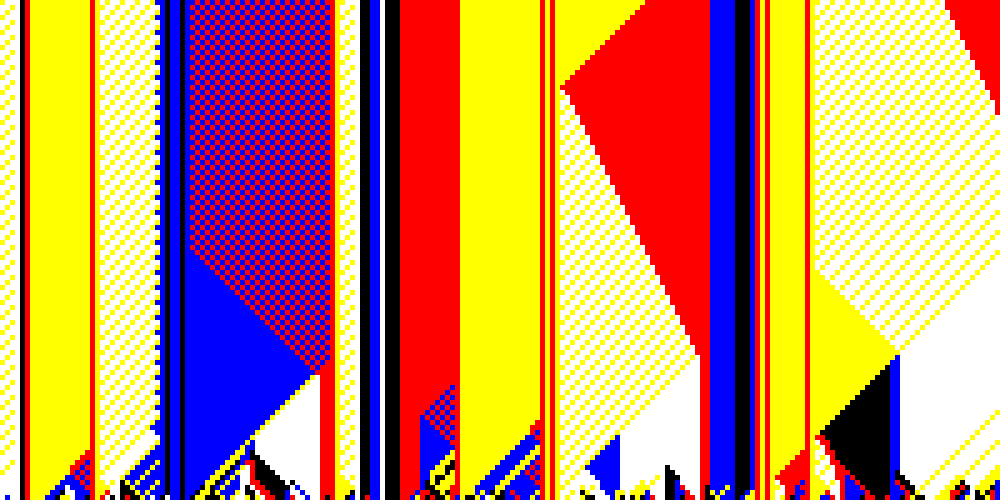}
&
\includegraphics[width=5cm]{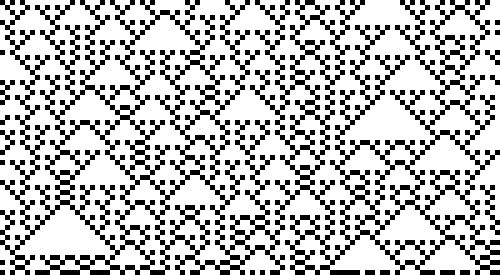}\\
\end{tabular}
\end{center}
\end{scriptsize}

\caption{Space-time diagrams of some cellular automata with particles, starting from
a configuration drawn uniformly at random. }\label{fig:particles}
\end{figure}

\section{Particle-based organisation: qualitative results}\label{sec:particles}

In this section, we take a qualitative approach to self-organisation: that is, we assume some properties on the dynamics of the particles of some cellular automaton and try to deduce properties of its $\mu$-limit measures set, with no regard to how fast this organisation takes place. 

\subsection{Particles}\label{sec:particlesdef}

\subsubsection{Definition of symbolic systems}
Given a finite alphabet $\A$, a \define{word} is a finite sequence of elements of $\A$. Denote by $\A^{\ast}=\bigcup_n\A^n$ the set of all words where $\A^0$ is the empty word $\varepsilon$. An infinite sequence indexed by $\Z$ is called a \define{configuration}. The set of configurations $\az$ is a compact set for the product topology. For a word $u\in\A^{\ast}$ the \define{cylinder} $[u]$ is the set of configurations where $u$ appears at the position $0$, and for $U\subset \A^\ast$ we have $[U] = \bigcup_{u\in U}[u]$. Cylinders are a clopen basis of the topology.

On $\az$ we define the \define{shift} map $\s(x)_i=x_{i+1}$ for all $x\in\az$ and $i\in\Z$. A \define{subshift} is a closed $\s$-invariant subset of $\az$. Equivalently, a subshift can be defined by a set of forbidden patterns $\mathcal{F}\subset \A^\ast$ as the set of configurations where no pattern of $\mathcal{F}$ appears. If $\mathcal{F}$ is finite, we call the corresponding subshift a \define{subshift of finite type} or SFT. The \define{radius} of an SFT $\Sigma$ is the smallest $\ell$ such that $\Sigma$ can be defined by a set of forbidden patterns in $\A^\ell$. The \define{language} of a subshift $\Sigma$ is defined as $\L_n(\Sigma) = \{u\in\A^n\ :\ \Sigma\cap[u]\neq \emptyset\}$ and $\L(\Sigma) = \bigcup_{n\in\N}\L_n(\Sigma)$. A SFT is \define{$\s$-transitive} if for any two patterns $u,v\in\L(\Sigma)$, there exists $w\in\A^\ast$ such that $uwv\in\L(\Sigma)$.

Given two finite alphabets $\A$ and $\B$, a \define{morphism} from $\az$ to $\B^{\Z}$ is a continuous function $\pi:\az\to\B^{\Z}$ which commutes with the shift (i.e. $\s(\pi(x))=\pi(\s(x))$ for all $x\in\az$). Equivalently a morphism can be defined by a local map $f:\A^{\mathcal{N}}\to\B$ where $\mathcal{N}\subset\Z$ is a finite set called the neighbourhood such that 
\[\pi(x)_i=f(x_{i+\mathcal N})\textrm{ for all $x\in\az$ and $i\in\Z$.}\]
The \define{radius} of $\pi$ is the minimal $r\in\N$ such that $\pi$ admits a local map with $\mathcal{N}\subset[-r,r]$. A \define{cellular automaton} is a morphism from $\az$ to itself, that is, the input and the output are defined on the same alphabet. In particular a cellular automaton can be iterated and it makes sense to study its dynamics.

\subsubsection{Particle system}

\begin{definition}[Particle system]~
Let $F:\az\to\az$ be a cellular automaton. A \define{particle system} for $F$ is a triplet $(\P, \pi, \phi)$, where:
 \begin{itemize}
 \itemsep0em
  \item $\P$ is a finite set of elements called \define{particles};
  \nomentry{$\P$}{Set of particles in a particle system}
  \item $\pi : \az \mapsto (\P\cup\{0\})^\Z$ is a morphism identifying the presence of particles at each position;
The set of positions that carry particles on $x$ is denoted by $\p_{\P, \pi}(x) = \{k\in \Z : \pi(x)_k \in \P\}$ (we omit $\P$ and $\pi$ when they are clear from the context);
  \nomentry{$\pi$}{Morphism}
\nomentry{$\p(x)$}{Set of coordinates where $x$ contains a particle}
  \item $\phi : \az\times \Z \mapsto 2^\Z$ (where $2^\Z$ denotes the set of subsets of $\Z$) is a function called the \define{update function} that describes the movement and/or offsprings of each particle after one iteration of $F$;
\end{itemize}
such that the following conditions are satisfied for all $x\in\az$ and $k\in\Z$:
\begin{description}
  \item[Locality] There is a constant $r>0$ (the \define{radius} of the system) such that $\phi(x,k)\subset[k-r,k+r]$.
  
     The particles cannot ``jump'' arbitrarily far; the radius does not depend on $x$ and $k$.
  \item[Redistribution] $\begin{array}{rl} \bigcup_{k\in\p(x)}\phi(x,k) &= \p(F(x)) \\ \bigcup_{k\notin\p(x)}\phi(x,k) &= \emptyset\end{array}$.

     The particle in $F(x)$ are exactly the offsprings of particles of $x$, and non-particles do not have offsprings.
  \item[Disjunction] $k < k' \Rightarrow \phi(x,k) = \phi(x,k') \text{ or } \max\phi(x,k) < \min\phi(x,k')$.

	Two particles either do not interact (in which case they cannot cross), or they share the same set of offsprings.
\end{description}
\nomentry{$\phi$}{Update function in a particle system}

\end{definition}

The four conditions ensure that the update function accurately describes the time evolution of the particles. Notice that since the morphism and update function are defined locally, the conditions can be checked algorithmically by simple enumeration of patterns up to a certain length.\sk

In the context of a fixed particle system for $F$, we use shorthands for the composition of the update function, defined inductively:
\[\phi^t(x,k) = \bigcup_{k'\in\phi(x,k)}\phi^{t-1}(F(x),k'),\]
and a notion of pre-image (with an abuse of notation):
\[\phi_x^{-1}(A) = \{k\in\Z\ |\ \phi(x,k) \cap A \neq \emptyset\}.\] 
If $\phi(x,k)$ is a singleton, we use ``$\phi(x,k)$'' instead of ``the only member of $\phi(x,k)$'' as an abuse of notation.

\subsubsection{Coalescence}

We postpone the discussion on how to find a particle system in a given cellular automaton to Section~\ref{sec:defects}. We now look for assumptions on the dynamics of the particles that let us deduce that some particles disappear asymptotically. Simulations suggest that this is the case when the particles are forced to collide, and that these collisions are destructive in the sense that the total number of particles decreases; thus we introduce the notion of coalescence.

\begin{definition}[Coalescence]

Let $F:\az\to\az$ be a cellular automaton, and $(\P,\pi,\phi)$ a particle system for $F$. This particle system is \define{coalescent} if, for every $x\in\az$ and $k\in\p(x)$, the particle has one of the two following behaviours:
\begin{description}
 \itemsep0em
 \item[Progression] $|\phi(x,k)| = |\phi_x^{-1}(\phi(x,k))| = 1$, and $\pi(x)_k =
\pi(F(x))_{\phi(x,k)}$

(the particle persists and its type does not change), or
\nomentry{$\prog(x)$}{Set of coordinates where $x$ contains a progressing particle}
 \item[Destructive interaction] $|\phi(x,k)| < |\phi_x^{-1}(\phi(x,k))|$

 (particles collide and generate strictly fewer particles (possibly 0); or a single particle disappears).
\nomentry{$\inter(x)$}{Set of coordinates where $x$ contains a interacting particle}
\end{description}
\end{definition}
Progressing and interacting particles of a configuration $x\in\az$ are denoted by $\prog_{\P,\pi,\phi}(x)$ and $\inter_{\P,\pi,\phi}(x)$, respectively, and $\P,\pi$ and $\phi$ are omitted when the particle system is clear from the context. $k\in\prog_{\P,\pi,\phi}(x)$ is the case when we use ``$\phi(x,k)$'' to mean ``the only member of the singleton $\phi(x,k)$''.\sk

\subsection{Probability measures and $\mu$-limit sets}

The $\mu$-limit set was introduced in~\cite{Kurka-Maass-2000} to describe the asymptotic behaviour corresponding to empirical observations. It consists in the patterns whose probability to appear does not tend to 0 when the initial point is chosen at random. To define it formally, let us introduce some notations.

Denote by $\Ms(\az)$ the set of $\s$-invariant probability measures on $\az$ (i.e. measures $\mu$ such that $\mu(\s^{-1}(U))=\mu(U)$ for any Borel set $U$). A measure is $\s$-ergodic if every $\sigma$-invariant Borel set has measure $0$ or $1$, and we denote by $\Merg(\az)$ the set of $\s$-ergodic probability measures. \cite{Walters} gives a good introduction to ergodic probability measures. 

\paragraph{Examples}~The \define{Bernoulli measure} $\lambda_{(p_a)_{a\in\A}}$ associated with a sequence $(p_a)_{a\in\A}$ of elements of $[0,1]$ whose sum is $1$ is defined by $\lambda_{(p_a)_{a\in\A}}([u])=p_{u_0}p_{u_1}\dots p_{u_{|u|-1}}$ for all $u\in\A^{\ast}$. If all the elements of $(p_a)_{a\in\A}$ have the same value $\frac{1}{|A|}$ we call it the uniform Bernoulli measure and denote it by $\lambda$. For any finite word $u\in\A^{\ast}$, define $\meas{u}$ as the unique $\s$-invariant probability measure supported by the $\s$-periodic configuration $^{\omega}u^{\omega}$ and its translations.\sk

Given a cellular automaton $F:\az\to\az$ and an initial measure $\mu\in\Ms(\az)$, we define the measure $\F\mu$ by $\F\mu(U)=\mu(F^{-1}(U))$ for any Borel set $U$. Since $F$ commutes with $\s$, one has $\F\mu\in\Ms(\az)$. Moreover if $\mu\in\Merg(\az)$, then $\F\mu\in\Merg(\az)$ as well. This allows to define the following action: 
\[
\begin{array}{rccc}
\F:&\Ms(\az)&\longrightarrow&\Ms(\az)\\
&\mu&\longmapsto &\F\mu.
\end{array}
\]

We consider the set of cluster points of the sequence $(\F^t\mu)_{t\in\N}$ called the \define{$\mu$-limit measures set} and denoted by $\V(F,\mu)$. The closure of the union of the supports of these measures is called the \define{$\mu$-limit set} and it is denoted by $\Lambda_{\mu}(F)$. Equivalently, it can be defined as the subshift
\[\Lambda_{\mu}(F)=\left\{x\in\az\ :\ \forall i,j\in \Z,\ \F^t\mu([x_{[i,j]}])\nrightarrow 0\right\}.\]

See \cite{Kurka-Maass-2000} for all basic examples. The $\mu$-limit (measures) set has been also well studied for two classes of cellular automata : automata exhibiting particle-like behaviour (\cite{Fisch-2, Belitsky-2005} and many others) and automata with an algebraic structure (\cite{Lind} and others).

\subsection{Evoution of the density of particles for coalescent systems}

Define the \define{frequency} with which the pattern $u$ appears in the configuration $x$ as
\[\freq(u,x)=\limsup_{n\to\infty}\frac{\card\{i\in[-n,n]:x_{[i,i+|u|-1]}=u\}}{2n+1}.\]
Similarly we define $\freq(S,x)$ where $S$ is a set of patterns.

We introduce the following notations for all the subsequent proofs. For $n\in\N$, let $\Ball_n$ be the set $[-n,n]\subset \Z$.
Let $F:\az\to\az$ be a cellular automaton. In the context of a fixed particle system $(\P,\pi,\phi)$, the \define{densities of particles} in a configuration $x\in\az$ are defined by:
\[\mbox{for }p\in\P,\ \D_p(x) = \freq(p,\pi(x))\quad\mbox{and}\quad \D(x) = \freq(\P,\pi(x));\]
\[\D_\prog(x) = \limsup_{t\to\infty}\frac 1{2t+1}|\prog(x)\cap\Ball_t|\quad\mbox{ and similarly for }\D_\inter(x),\]
the last two definitions applying only if the particle system is coalescent. 

For $\mu\in\Merg(\az)$, by Birkhoff’s ergodic theorem, the $\limsup$ can be replaced by a simple limit in the definition of frequency for $\mu$-almost all configurations.
This implies for example that $\D(x) = \sum_{p\in\P}\D_p(x)$ for $\mu$-almost all $x$.

First of all, the following proposition clarifies how controlling the frequency of interactions gives us information about the evolution of the density of the different kinds of particles. 

\begin{proposition}[Evolution of densities]\label{prop:technical}

Let $F : \az\to\az$ be a cellular automaton, $\mu\in\Merg(\az)$, $(\P,\pi,\phi)$ a coalescent particle system for $F$, and $r$ the radius of the update function $\phi$. Then, for $\mu$-almost all $x\in\az$:
\begin{enumerate}
\item $\D(F(x))\leq \D(x) - \frac{1}{r+1}\D_\inter(x)$;
\item $\forall p\in\P, \D_p(F(x))\leq \D_p(x) + \D_\inter(x)$.
\end{enumerate}
\end{proposition}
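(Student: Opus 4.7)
The plan is to establish both bounds by finite counting in $\Ball_t$, then divide by $2t+1$ and appeal to Birkhoff's ergodic theorem, which under $\mu$-ergodicity replaces the $\limsup$ in the definitions of $\D$, $\D_p$, $\D_\prog$ and $\D_\inter$ by an almost-sure limit.

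First I would record the combinatorial consequence of destructive interaction: by locality, $|\phi(x,k)|+|\phi^{-1}\phi(x,k)|\leq 2r+2$, so if $k\in\inter(x)$, the strict inequality $|\phi(x,k)|<|\phi^{-1}\phi(x,k)|$ combined with this upper bound forces $|\phi^{-1}\phi(x,k)|\leq r+1$. Thus every equivalence class of $\inter(x)$ under the relation $k\sim k'\iff\phi(x,k)=\phi(x,k')$ has size between $2$ and $r+1$, and produces strictly fewer particles than its size.

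Next I would fix $t$ and partition $\p(x)\cap\Ball_t$ into progressing particles and interacting classes (of sizes $m_1,\ldots,m_j\in[2,r+1]$). By the locality radius $r$, every particle of $F(x)$ in $\Ball_{t-r}$ is the image under $\phi$ of some particle of $x$ in $\Ball_t$; by the disjunction property these images are either singletons (progression) or the common image of a full interacting class. Counting outputs gives
\[
|\p(F(x))\cap\Ball_{t-r}|\leq |\prog(x)\cap\Ball_t|+\sum_{i=1}^{j}(m_i-1)
= |\p(x)\cap\Ball_t|-j
\leq |\p(x)\cap\Ball_t|-\frac{|\inter(x)\cap\Ball_t|}{r+1},
\]
using $\sum m_i=|\inter(x)\cap\Ball_t|$ and $m_i\leq r+1$. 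Dividing by $2t+1$, letting $t\to\infty$ (the $\Ball_{t-r}$ versus $\Ball_t$ discrepancy is $O(r/t)$), and using Birkhoff to turn frequencies into almost-sure limits gives inequality~(1).

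For inequality~(2), I would run the same counting but restrict to the type $p$ on the output side. A $p$-particle in $F(x)\cap\Ball_{t-r}$ arises either as the image of a progressing $p$-particle in $\Ball_t$ (coalescence preserves the type in the progression case) or as an element of $\phi(x,k)$ for some interacting $k\in\Ball_t$. The first contribution is at most $|\{k\in\prog(x)\cap\Ball_t:\pi(x)_k=p\}|\leq|\{k\in\Ball_t:\pi(x)_k=p\}|$, and the second is bounded by the total number of interaction outputs $\sum(m_i-1)\leq |\inter(x)\cap\Ball_t|$. Dividing and taking limits as before yields $\D_p(F(x))\leq \D_p(x)+\D_\inter(x)$. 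The only delicate point is the size bound $m_i\leq r+1$ in the first step; once that is in place the rest is bookkeeping.
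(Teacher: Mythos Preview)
Your overall architecture (count in $\Ball_t$, split into progressing and interacting particles, pass to the limit via Birkhoff) is the same as the paper's, and your treatment of part~(2) is fine. But part~(1) contains a genuine logical error at exactly the step you flag as ``the only delicate point''.

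From $|\phi(x,k)|<|\phi^{-1}\phi(x,k)|$ and $|\phi(x,k)|+|\phi^{-1}\phi(x,k)|\leq 2r+2$ you \emph{cannot} deduce $|\phi^{-1}\phi(x,k)|\leq r+1$. Writing $a=|\phi(x,k)|$ and $b=|\phi^{-1}\phi(x,k)|$, the hypotheses $a<b$ and $a+b\leq 2r+2$ force $a\leq r$, not $b\leq r+1$: for instance $a=1$, $b=2r+1$ is consistent, and with $r=2$ one can genuinely realise four particles at positions $0,1,2,3$ all sharing the single image $\{2\}$, so $b=4>r+1=3$. (If $a=0$ the preimage class can even be unbounded, since all particles with empty image are lumped together.) Consequently your lower bound $j\geq\frac{|\inter(x)\cap\Ball_t|}{r+1}$ on the number of classes fails, and the chain $|\p(x)\cap\Ball_t|-j\leq|\p(x)\cap\Ball_t|-\frac{|\inter(x)\cap\Ball_t|}{r+1}$ breaks down.

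The paper repairs this by bounding the \emph{ratio} per class rather than the class size: from $a_i\leq m_i-1$ and $a_i+m_i\leq 2r+2$ one gets $a_i\leq\frac{r}{r+1}m_i$ in every case (check $m_i\leq r+1$ and $m_i\geq r+2$ separately; classes with $a_i=0$ are trivial). Summing gives directly
\[
\left|\bigcup_{k\in\inter(x)\cap\Ball_{n+r}}\phi(x,k)\right|\leq \frac{r}{r+1}\,|\inter(x)\cap\Ball_{n+2r}|,
\]
so that $|\p(F(x))\cap\Ball_n|\leq |\prog(x)\cap\Ball_{n+r}|+\frac{r}{r+1}|\inter(x)\cap\Ball_{n+2r}|$, and the limit yields $\D(F(x))\leq \D_\prog(x)+\frac{r}{r+1}\D_\inter(x)=\D(x)-\frac{1}{r+1}\D_\inter(x)$. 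Replacing your class-count step by this ratio bound fixes the argument with no other changes needed.
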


\begin{lemma}\label{lem:details}
\begin{enumerate}
\item For all $x$ and $k$:
\[|\phi(x,k)| + |\phi_x^{-1}(\phi(x,k))|\leq 2r+2,\]
\item which implies when $k\in\inter(x)$: \[|\phi(x,k)|\leq \frac{r}{r+1}|\phi_x^{-1}(\phi(x,k))|.\]
\end{enumerate}
\end{lemma}

\begin{proof}(of Lemma~\ref{lem:details})
Take $i\leq i'$, resp. $j\leq j'$, the extremal points of $|\phi(x,k)|$ and $|\phi_x^{-1}(\phi(x,k))|$ respectively. By locality of the update function, we have:
\[|\phi(x,k)| + |\phi_x^{-1}(\phi(x,k))|\leq (i'-i+1) + (j'-j+1) = (i'-j)+(j'-i)+2 \leq 2r+2.\]

The proof is illustrated in Figure~\ref{fig:visual}.

\begin{figure}[ht]
 \begin{tikzpicture}
  \draw (0,0) node {$x$};
  \draw (0,1) node {$F(x)$};
  \draw[black!30] (1,0) -- (13,0) (1,1) -- (13,1);
  \draw[very thick] (3,-.1) -- (3,.1) (10,-.1) -- (10,.1) (3,0) -- (10,0);
  \draw[very thick] (5,.9) -- (5,1.1) (8,.9) -- (8,1.1) (5,1) -- (8,1);
  \draw (6.5,-.5) node {$\phi_x^{-1}(\phi(x,k))$};
  \draw (6.5,1.5) node {$\phi(x,k)$};
  \draw[->] (3.1,.1) -- (7.9,.9);
  \draw (4.2,.5) node {$\leq r$};
  \draw (8.8,.5) node {$\leq r$};
  \draw[->] (9.9,.1) -- (5.1,.9);
 \end{tikzpicture}
 \caption{Visual proof that $|\phi(x,k)| + |\phi_x^{-1}(\phi(x,k))|\leq 2r+2$.}
 \label{fig:visual}
\end{figure}
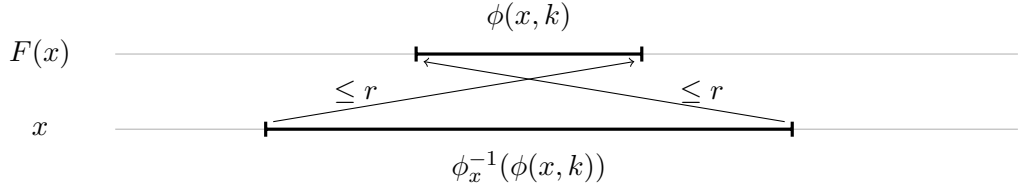
If furthermore $k\in\inter(x)$, since the particle system is coalescent, we have $|\phi(x,k)|<|\phi_x^{-1}(\phi(x,k))|$. The maximum of the ratio $\frac{|\phi(x,k)|}{|\phi_x^{-1}(\phi(x,k))|}$ is then reached on $|\phi(x,k)|=r$, $|\phi_x^{-1}(\phi(x,k))|=r+1$.
\end{proof}
We continue the proof of Proposition~\ref{prop:technical}
\begin{proof} (1) By the redistribution property of the update function, we have $\p(F(x))=\bigcup_{k\in\p(x)}\phi(x,k)$. 
Furthermore, by locality, \begin{align*}\forall x\in\az,\ \forall n\in\N,\ 
\p(F(x))\cap \Ball_n \subseteq& \bigcup_{k\in\p(x)\cap\Ball_{n+r}}\phi(x,k)\\
\subseteq&\bigcup_{k\in\prog(x)\cap\Ball_{n+r}}\phi(x,k)\quad \sqcup \bigcup_{k\in\inter(x)\cap\Ball_{n+r}}\phi(x,k),
\end{align*}

where $\sqcup$ denotes a disjoint union. The second line is obtained by coalescence: since $\p(x) = \prog(x)\sqcup \inter(x)$, 
particles in $F(x)$ are either images of progressing particles or of interacting particles. 
By disjunction:
\begin{align*}
\forall x\in\az,\ \left|\bigcup_{k\in\prog(x)\cap\Ball_{n+r}}\phi(x,k)\right| &= |\prog(x)\cap\Ball_{n+r}|\\
\mbox{and }\quad\forall x\in\az,\ \left|\bigcup_{k\in\inter(x)\cap\Ball_{n+r}}\phi(x,k)\right| &\leq
\frac{r}{r+1}\left|\phi_x^{-1}\left(\bigcup_{k\in\inter(x)\cap\Ball_{n+r}}\phi(x,k)\right)\right| \\
&\leq \frac{r}{r+1}\left|\inter(x)\cap\Ball_{n+2r}\right|.
\end{align*}

This first equality is because progressing particles are ``one-to-one''. The second inequality is by Lemma~\ref{lem:details} and by locality. It follows:
\[\forall x\in\az,\ |\p(F(x))\cap \Ball_n| \leq |\prog(x)\cap\Ball_{n+r}| + \frac{r}{r+1}\left|\inter(x)\cap\Ball_{n+2r}\right|.\]
Then, passing to the limit:
\[\textrm{For $\mu$-almost all } x\in\az,\ \D(F(x))\leq \D_\prog(x)+\frac{r}{r+1}\D_\inter(x) = \D(x)-\frac1{r+1}\D_\inter(x).\]

(2) Similarly, for any particle $p\in\P$, one has for all $x\in\az$ and $n\in\N$:
\[\{k\in\Ball_n\ |\ \pi(F(x))_k=p\} \subseteq
\bigcup_{k\in\p(x)\cap\Ball_{n+r}}\phi(x,k)\quad\mbox{(locality)}.\]
For $k\in\prog(x)$, if $\pi(F(x))_{\phi(x,k)} = p$, then by definition of coalescence $\pi(x)_k = p$. For $\mu$-almost all $x$, using $\p(x) = \prog(x)\sqcup \inter(x)$, we conclude that $\D_p(F(x))\leq \D_p(x)+\D_{inter}(x)$ by passing to the limit.
\end{proof}

\subsection{A particle-based self-organisation result}\label{sec:selforg}

We state our main result. A simple version (Corollary~\ref{cor:MainResult}) states that in a coalescent particle system with a $\s$-ergodic initial measure, if all particles can be assigned a speed, then only particles with one fixed speed may survive asymptotically. The more general result is designed to handle more difficult cases such as particles performing random walks, as in the last example of Section~\ref{sec:particlesexamples}.

\begin{definition}[Clashing]
Let $F:\az\to\az$ be a cellular automaton, $(\P,\pi,\phi)$ a coalescent particle system for $F$, and $\P_1$ and $\P_2$ two subsets of $\P$. We say that $\P_1$ \define{clashes with} $\P_2$ $\mu$-almost surely if, for every $n\in\N^\ast$ and $\mu$-almost all $x\in\az$,
\[\pi(x)_0\in\P_1 \mbox{ and }\pi(x)_n\in\P_2 \Longrightarrow \exists t\in\N,\phi^t(x,0)\in\inter(F^t(x))\mbox{ or }\phi^t(x,n)\in\inter(F^t(x))\]
\end{definition}
The abuse of notation in the last line is justified by the fact that, if the images $\phi^{t}(x,k)$ ($k=0,n$) are not in interaction for all $t'<t$, then $\phi^{t}(x,k)$ is still a singleton.

The intuition behind clashing particles is the following: if two clashing particles are present with positive frequency, then at least one of them end up almost surely in interaction with positive frequency, decreasing the global frequency of particles. This is why they cannot both persist asymptotically. Note that clashing is oriented left to right: intuitively, particles with speed $+1$ clash with particles with speed $-1$, but the converse is not true.

\begin{theorem}[Main qualitative result]\label{prop:MainResult}

Let $F:\az\to\az$ be a cellular automaton, $\mu$ an initial $\s$-ergodic measure and $(\P,\pi,\phi)$ a coalescent particle system for $F$ where $\P$ can be partitioned into sets $\P_1\dots \P_n$ such that, for every $i<j$, $\P_i$ clashes with $\P_j$ $\mu$-almost surely.

Then:
\begin{enumerate}
\item All particles appearing in the $\mu$-limit set belong to the same subset, i.e.
\[\exists i\in[1,n],\ \forall p\in\P,\ p\in\L(\pi(\Lambda_\mu(F)))\Rightarrow p\in\P_i.\]

\item If furthermore there exists a $j$ such that $\P_j$ clashes with itself $\mu$-almost surely, then this subset of particles does not appear in the $\mu$-limit set, i.e.
\[\forall p\in\P,\ p\in\L(\pi(\Lambda_\mu(F)))\Rightarrow p\notin\P_j.\]
\end{enumerate}
\end{theorem}

We introduce the notion of \define{speed} which is less general but easier to handle than the notion of clashing.

\begin{definition}[Speed]

Let $F$ be a cellular automaton and $(\P,\pi,\phi)$ be a particle system for $F$.

 A particle $p\in\P$ has \define{speed $\bm{v\in\Z}$} if for any configuration $x\in\az$ and $k\in\Z$ such that $\pi(x)_k=p$, we have one of the following:
 \begin{description}
  \itemsep0em
  \item[Eventual interaction] $\exists t, \phi^t(x,k) \in\inter(F^t(x))$;
  \item[Progression at speed $v$] $\forall t, \phi^t(x,k)\in\prog(F^t(x))$ and $\frac{\phi^t(x,k)-k}t\underset{t\to\infty}\to v$.
 \end{description}
\end{definition}

\begin{corollary}[Version with speedy particles]\label{cor:MainResult}

Let $F:\az\to\az$ be a cellular automaton, $\mu$ an initial $\s$-ergodic measure and $(\P,\pi,\phi)$ a coalescent
particle system for $F$.

If each particle $p\in\P$ has speed $v_p\in\R$,then there is a speed  $v\in\R$ such that:
\[\forall p\in\P, p\in\L(\pi(\Lambda_\mu(F)))\Rightarrow v_p=v.\]
\end{corollary}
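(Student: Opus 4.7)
The plan is to deduce the corollary directly from Theorem~\ref{prop:MainResult} by partitioning $\P$ according to speed. Let $v_1 > v_2 > \dots > v_n$ be the distinct values taken by $v_p$ for $p\in\P$ (ordered \emph{decreasingly}), and set $\P_i = \{p\in\P : v_p = v_i\}$. Once we show that $\P_i$ clashes with $\P_j$ $\mu$-almost surely whenever $i<j$, Theorem~\ref{prop:MainResult} immediately yields an index $i$ such that every particle appearing in $\L(\pi(\Lambda_\mu(F)))$ lies in $\P_i$, and we take $v = v_i$.

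The core step is therefore the clashing verification. Fix $i<j$, a configuration $x\in\az$ with $\pi(x)_0 = p\in\P_i$ and $\pi(x)_n = q\in\P_j$ for some $n>0$. By definition of speed, either one of the two particles eventually interacts (and clashing holds) or both fall under the progression-at-speed-$v$ clause, so that $\phi^t(x,0) \sim v_i t$ and $\phi^t(x,n) \sim n + v_j t$ as $t\to\infty$. Since $v_i > v_j$, for $t$ large enough we would have $\phi^t(x,0) > \phi^t(x,n)$, which I will argue is impossible without an interaction having already occurred.

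The main obstacle is making the ``particles cannot cross without interacting'' argument rigorous from the axioms of a particle system. The key is a straightforward induction on $t$: the disjunction axiom applied to the two tracked particles at time $t$ says that either their images are equal, in which case both coordinates at time $t+1$ witness the same element of $\phi(F^t(x),\cdot)$ with $|\phi^{-1}(\phi(\cdot))|\geq 2$ (so at least one of them is interacting at time $t$, contradicting the progression hypothesis on both of them), or $\max\phi(F^t(x), \phi^t(x,0)) < \min\phi(F^t(x), \phi^t(x,n))$, in which case the strict order $\phi^{t+1}(x,0) < \phi^{t+1}(x,n)$ is preserved. Thus, in the progression case, the order $\phi^t(x,0)<\phi^t(x,n)$ holds for all $t$, contradicting the asymptotic inequality above. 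Hence at least one of the two particles must interact at some finite time, which is exactly the clashing condition, and the corollary follows.
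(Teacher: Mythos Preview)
Your proof is correct and follows essentially the same route as the paper: partition $\P$ by speed (in decreasing order), verify that faster particles clash with slower ones, and invoke Theorem~\ref{prop:MainResult}. The only difference is that you spell out the ``two progressing particles cannot cross'' argument via an explicit induction using the disjunction axiom, whereas the paper simply asserts this fact as a consequence of coalescence; your version is slightly more detailed but the content is the same.
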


\begin{proof}[Proof of Theorem~\ref{prop:MainResult}]
For the first point, Let $i=1,\, j=2$ for clarity and let $p_1\in\P_1, p_2\in\P_2$ be two particles. We show that they cannot both appear in the $\mu$-limit set.

First we study the behaviour of the sequences of density of particles. For all $x\in\az$, by Proposition~\ref{prop:technical}(1), $(\D(F^t(x)))_{t\in\N}$ is a decreasing sequence of positive reals and admits a limit $d_\infty(x)$. In particular $\D_\inter(x)\to 0$. Applying Birkhoff's theorem to $\pi_\ast\F^t\mu$ for any $t$, we get that $\D(F^t(x)) = \pi_\ast\F^t\mu([\P])$ for $\mu$-almost all~$x$ (recall that $[\P] = \bigcup_{p\in\P}[p]$). In particular there is a real $d_\infty$ such that $d_\infty(x) = d_\infty$ for $\mu$-almost all $x$.

For $x\in\az$, we define $\D_{\P_i}(x) = \freq(\P_i, \pi(x))$; we prove that this sequence also admits a limit. By Proposition~\ref{prop:technical}(2), we have:
\[\mbox{For }i\in\{1,2\},\ \sup_{n\in\N}|\D_{\P_i}(F^{t+n}(x))-\D_{\P_i}(F^t(x))|\leq \sum_{n=0}^\infty \D_\inter(F^{t+n}(x)).\]

To prove that $(\D_{\P_i}(F^t(x)))_{t\in\N}$ is a Cauchy sequence, we need to show that $\sum_{t\in\N}\D_\inter(F^t(x))<+\infty$. By Proposition~\ref{prop:technical}(1), we have:
\[\sum_{t\in\N}\D_\inter(F^t(x)) \leq (r+1)\left(\sum_{t\in\N}\D(F^t(x))-\D(F^{t+1}(x))\right)\leq (r+1)(\D(x)-d_\infty(x)) < +\infty.\]
Thus $(\D_{\P_i}(F^t(x)))_{t\in\N}$ is a Cauchy sequence and admits a limit $d_i(x)\neq 0$. Using again Birkhoff's theorem, we have that $(\D_{\P_i}(F^t(x)))_{t\in\N} = (\pi_\ast\F^t\mu([\P_i]))_{t\in\N}$ for $\mu$-almost all $x$, and therefore there is a real $d_i$ such that $d_i(x) = d_i$ for $\mu$-almost all $x$.\sk

Assume that $p_i\in\L(\pi(\Lambda_\mu(F)))$ for $i=1,2$. This implies $d_i>0$ for $i=1,2$. Since clashing particles generate interactions, we show that this contradicts the fact that $\sum \D_\inter(F^t(x))<+\infty$ for all $x$.

Fix $\varepsilon < \frac{d_1\cdot d_2}{r+3}$ and $T$ large enough such that for $t\geq T,\
\pi_\ast\F^t\mu([\P])-d_\infty<\varepsilon$
and $|\pi_\ast\F^t\mu([\P_i])-d_i| < \varepsilon$ for $i\in\{1,2\}$.
By Birkhoff's ergodic theorem applied on $\pi_\ast\F^T\mu$, we have:
\[\frac 1K\sum_{k=0}^K\pi_\ast\F^T\mu\left([p_1]_0\cap[p_2]_k\right) \underset{K\to\infty}\longrightarrow
\pi_\ast\F^T\mu([p_1])\cdot\pi_\ast\F^T\mu([p_2]).\]
Note that $[p_1]_0\cap[p_2]_k$ are words containing clashing particles positioned so that they will generate an interaction. We have $\pi_\ast\F^T\mu([p_1])\cdot\pi_\ast\F^T\mu([p_2]) \geq (d_1-\varepsilon)\cdot (d_2-\varepsilon)\geq d_1\cdot d_2-2\varepsilon$. By Birkhoff's theorem, this means that for $\mu$-almost all $x\in\az$, words belonging in $\bigcup_k V_k$ where $V_k = p_1(\P\cup\{0\})^{k-1}p_2\subset(\P\cup\{0\})^\ast$
have frequency at least $d_1\cdot d_2-2\varepsilon$ in $\pi F^T(x)$.

Since $\P_1$ and $\P_2$ clash $\mu$-almost surely, any occurrence of $V_k$ yields a future interaction: that is, $\freq\left(\bigcup_k V_k,\pi F^T(x)\right) \leq \sum_{t=T}^\infty \D_\inter(F^t(x))$. We show the contradiction:
\begin{align*}
\textrm{For $\mu$-almost all } x\in\az,\ \D(F^T(x))-d_\infty &\geq \frac 1{r+1} \sum_{t=T}^\infty \D_\inter(F^t(x))&\mbox{Proposition~\ref{prop:technical}(i)}\\
                    &\geq \frac 1{r+1}(d_1\cdot d_2-2\varepsilon)>\varepsilon,
\end{align*}
which is a contradiction with the definition of $\varepsilon$. To prove the second point, apply the same proof to two particles in $\P_j$.
\end{proof}

\begin{proof}[Proof of Corollary~\ref{cor:MainResult}]
Consider the set of speeds $\{v_p\ :\ p\in\P\}$ and order it as $v_1>v_2> \dots>v_n$. Now partition the set of particles into $(\P_{v_i})_{0\leq i\leq n}$ where $\P_{v_i}$ is the set of particles with speed $v_i$, and apply the Theorem~\ref{prop:MainResult}.

We check the hypothesis of Theorem~\ref{prop:MainResult}: for any $i<j$, $\P_{v_i}$ clashes with $\P_{v_j}$ $\mu$-almost surely. Let $p_i\in\P_{v_i}$ and $p_j\in\P_{v_j}$, and $x\in\az$ such that $\pi(x)_0=p_i$ and $\pi(x)_n=p_j$ for some $n\in\N^\ast$. If both particles satisfy the second property in the definition of speed (Progression at speed $v$), then for some $t$ large enough we have $\phi^t(x,0)>\phi^t(x,n)$, which is forbidden by coalescence since two particles in progression cannot cross. Thus at some time $t$ we have either $\phi^t(x,0)\in\inter(F^t(x))$ or $\phi^t(x,n)\in\inter(F^t(x))$.
\end{proof}

\subsection{Pivato's defect formalism}\label{sec:defects}

Before giving a series of examples where this result can be used to describe the typical asymptotic behaviour of a cellular automaton, we present the formalism introduced by Pivato in~\cite{Pivato-2007-algebra,Pivato-2007-spectral} that defines particles as defects with respect to a $F$-invariant subshift $\Sigma$. Indeed, this formalism gives us an easier way to find the particle systems in our examples.

Intuitively, the $F$-invariant subshift describes the homogeneous regions that persist under the action of $F$ in the space-time diagram, and defects are the borders between these regions. This allows us to define $\P$ and $\pi$ in a way that corresponds to the intuition, even though it gives no information on the dynamics of the particles (the update function $\phi$).

\subsubsection{Defects}

For a cellular automaton $F$, consider $\Sigma$ a $F$-invariant subshift. The \define{defect field} of $x\in\az$ with
respect to $\Sigma$ is defined as:
\[\mathcal{F}^{\Sigma}_x:\begin{array}{ccl}
			      \Z&\to&\N\cup\{\infty\}\\
                              k&\mapsto&\displaystyle \max \left\{n\in \N : x_{k+[-\lfloor\frac {n-1}2\rfloor, \lceil\frac{n-1}2\rceil]}\in \L_n(\Sigma)\right\}
                            \end{array}
,\]
where the result is possibly $0$ or $\infty$ if the set is empty or infinite. Intuitively, this function returns the size of the largest word admissible for $\Sigma$ centred on the cell $k$. A \define{defect} in a configuration $x$ relative to $\Sigma$ is a local minimum of $\mathcal{F}^{\Sigma}_x$. Then the interval $[k,l]$ between two defects forms a homogeneous region in the sense that $x_{[k+1,l]}\in \mathcal L(\Sigma)$.\sk

However, it is not true that we can always make a correspondence between defects and a finite set of words (forbidden patterns), so as to obtain a finite set of particles and a morphism. This is the case only when the set of forbidden patterns is finite, that is, when $\Sigma$ is a SFT. In this case, a defect corresponds to the centre of the occurrence of a forbidden word. This is a limitation of our result.\sk

The examples given in Figure~\ref{fig:particles} suggest that defects can usually be classified using one of these approaches:
\vspace{-.4\baselineskip}
\begin{itemize}
 \itemsep0em
 \item Regions correspond to different subshifts and defects behave according to their surrounding regions
(\define{interfaces} - e.g. cyclic automaton);
 \item Regions correspond to the same periodic subshift and defects correspond to a ``phase change''
(\define{dislocations} - e.g. rule 184 automaton).
\end{itemize}

\subsubsection{Interfaces}

Assume that $\Sigma$ is a SFT and can be decomposed as a disjoint union $\Sigma_1 \sqcup \dots \sqcup \Sigma_n$ of $F$-invariant $\s$-transitive SFTs (the \define{domains}). Intuitively, the region between two defects belongs to the language of (at least) one of the domains; we classify each defect according to which domain the regions surrounding it on the left and on the right correspond to. Since each domain is $F$-invariant, this classification is conserved under the action of $F$ for non-interacting defects.

Formally, since the different domains $(\Sigma_k)_{k\in[1,n]}$ are disjoint SFTs, there is a length $\alpha>0$ such that $(\L_{\alpha}(\Sigma_k))_{k\in[1,n]}$ are disjoint (if two subshifts share arbitrarily long words, they share a configuration by closure). In particular, if $u\in\L_{\alpha}(\Sigma)$, then there is a unique $k$ such that $u\in\L(\Sigma_k)$: we say that $u$ \define{belongs to the domain $k$}. Thus, for a given configuration, we can assign a choice of a domain to each homogeneous region between two consecutive defects, and this choice is unique if this region is larger than $\alpha$ cells.

\begin{figure}[!ht]
\begin{center}
\begin{tikzpicture}
 \foreach \x/\colour/\k/\f in
{-1/black/1/7,0/black/1/5,1/black/1/3,2/black/1/1,3/white/0/2,4/white/0/4,5/white/0/5,6/white/0/3,7/white/0/1,8/gray/2/2
,9/gray/2/4,10/gray/2/5,11/gray/2/3, 12/gray/2/1, 13/white/0/2, 14/white/0/4, 15/white/0/6}
   {
   \filldraw[draw = black] [fill=\colour!80] (0.7*\x, 0) rectangle (0.7*\x+0.7,0.7);
   \draw (0.7*\x+0.35, 0.95) node {\small \k};
   \draw (0.7*\x+0.35, -0.25) node {\small \f};
   }
 \draw [draw = red,thick] (2.1,-0.4) -- (2.1,1);
 \draw [draw = red, thick] (5.6,-0.4) -- (5.6,1); 
\draw [draw = red, thick] (9.1,-0.4) -- (9.1,1);
\draw[red] (2.1, 1.4) node {$1-0$};
\draw[red] (5.8, 1.4) node {$0-2$};
\draw[red] (9.3, 1.4) node {$2-0$};
 \draw (-1.4, -0.25) node {$\mathcal F_a(x)$};
 \draw (-1.8, 0.35) node {$x$};
 \draw (-1.4, 1) node {domain};
 \draw (-1.1, 0.35) node {\dots};
 \draw (11.6, 0.35) node {\dots};
 \draw [pattern = north east lines] (1.4,0) rectangle (2.1,0.7);
 \draw [pattern = north east lines] (4.9,0) rectangle (5.6,0.7);
  \draw [pattern = north east lines] (8.4,0) rectangle (9.1,0.7);
\end{tikzpicture}

\caption{Interfaces between monochromatic domains, marked by slanted patterns. To each interface corresponds a domain change, marked by a red line.}\label{fig:defect}
\end{center}
\end{figure}
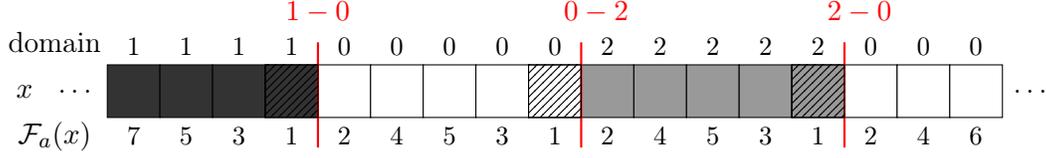

Defects relative to such an SFT are called \define{interface defects} and can be classified according to the domain of the surrounding regions. Let $\P = \{p_{ij}\ :\ (i,j)\in[1,n]^2\}$ be the set of particles. Define the morphism $\pi : \az \to (\P\cup\{0\})^\Z$ of radius $\max(\lceil r/2\rceil,\alpha)$, where $r$ is the radius of $\Sigma$, in the following way. For $x\in\az$ and $k\in\Z$:

\begin{itemize}
 \itemsep0em
 \item if $x_{k+[-\lfloor\frac r2\rfloor, \lceil\frac r2\rceil]}\in\L(\Sigma)$, then $\pi(x)_k=0$;
 \item else, let $\left\{\begin{array}{l}
                  u_1 = x_{[k-m,k]}$ where $m = \max\{\ell\leq \alpha\ :\ x_{[k-\ell,k]}\in\L(\Sigma)\}\\
                  u_2 = x_{[k+1,k+m]}$ where $m = \max\{\ell\leq \alpha\ :\ x_{[k+1,k+\ell]}\in\L(\Sigma)\}\\
                  d_i \mbox{ a domain to which }u_i\mbox{ belongs }(i\in\{1,2\})
                 \end{array}\right.$
                 
and put $\pi(x)_k=p_{d_1d_2}$.
\end{itemize}

The domain choice (choice of $d_i$) is unique when domains contain at least $\alpha$ cells; otherwise, the choice between the possible $d_i$ is arbitrary, or fixed beforehand. Notice that the first check requires radius at least $\lceil\frac r2\rceil$ and the second check requires radius at least $\alpha$.

\subsubsection{Dislocations}

Contrary to interface defects that mark a change between languages of different SFT, \emph{dislocation defects} mark a ``change of phase'' inside a single SFT. 

Let $\Sigma$ be a $\s$-transitive SFT of order $r>1$. We say that $\Sigma$ is \define{$P$-periodic} if there exists a
partition $V_1,\dots,V_P$ of $\L_{r-1}(\Sigma)$ such that 
\[a_1\cdots a_r\in\L_r(\Sigma)\quad\Leftrightarrow \quad \exists i\in\Z /P\Z,\ 
a_1\cdots a_{r-1}\in V_i\mbox{ and }a_2 \cdots a_r\in V_{i+1}.\] 
The \define{period} of $\Sigma$ is the maximal $P\in\N$ such that $\Sigma$ is $P$-periodic. For example, the orbit of a finite word $u\in\A^\ast$, defined as $\{\s^k(\mathstrut^\infty u^\infty) : k\in\Z\}$ is a periodic SFT of period at most $|u|$.

We thus associate to each $x\in\Sigma$ its \define{phase} $\varphi(x) \in\Z/P\Z$ such that $x_{[0,r-2]}\in
V_{\varphi(x)}$. Obviously, $\varphi(\s^k(x)) = \varphi(x)+k\mod p$. For $x\in\az$, we say that an homogeneous region
$[a,b]$ (i.e. a region such that $x_{[a,b]}\in\Sigma$) is \define{in phase $k$} if $\exists y\in\Sigma, \varphi(y)=k,
x_{[a,b]}=y_{[a,b]}$. If $b-a>r-2$, the phase of a region is unique and means $x_{[a,a+r-2]}\in V_{k+a\mod p}$.

\begin{figure}[!ht]
\begin{center}
\begin{tikzpicture}
 \foreach \x/\colour/\k/\f in {-1/black/1/7,
0/white/1/5,1/black/1/3,2/white/1/1,3/white/0/2,4/black/0/4,5/white/0/6,6/black/0/5,7/white/0/3,8/black/0/1,9/black/1/2,
10/white/1/4,11/black/1/5,12/white/1/3,13/black/1/1, 14/black/0/2, 15/white/0/4, 16/black/0/6}
   {
   \filldraw[draw = black] [fill=\colour!80] (0.7*\x, 0) rectangle (0.7*\x+0.7,0.7);
   \draw (0.7*\x+0.35, 0.95) node {\small \k};
   \draw (0.7*\x+0.35, -0.25) node {\small \f};
   }
 \draw [draw = red,thick] (2.1,-0.4) -- (2.1,1.1);
 \draw[red] (2.1, 1.4) node {$1-0$};
 \draw [draw = red, thick] (6.3,-0.4) -- (6.3,1.1);
 \draw[red] (6.3, 1.4) node {$0-1$};
 \draw [draw = red, thick] (9.8,-0.4) -- (9.8,1.1);
 \draw[red] (9.8, 1.4) node {$0-1$};
 \draw (-1.4, -0.25) node {$\mathcal{F}_a(x)$};
 \draw (-1.6, .35) node {$x$};
 \draw (-1.4, 1) node {phase};
 \draw (-1.1, 0.35) node {\dots};
 \draw (12.3, 0.35) node {\dots};
 \draw [pattern = north east lines] (1.4,0) rectangle (2.1,0.7);
 \draw [pattern = north east lines] (5.6,0) rectangle (6.3,0.7);
 \draw [pattern = north east lines] (9.1,0) rectangle (9.8,0.7);
\end{tikzpicture}
\end{center}
\caption{Dislocations in the chequerboard subshift ($P=2$), marked by slanted patterns. Red lines show the visual
intuition of a change of phase, with the surrounding local phases.}
\label{fig:dislo}
\end{figure}
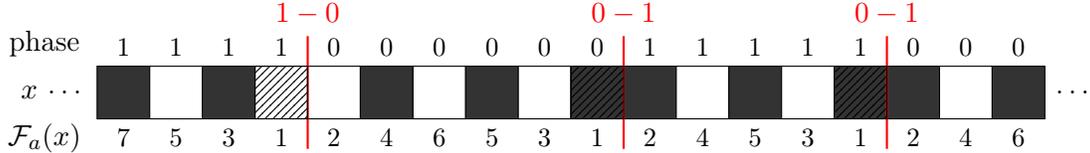

As we can see in Figure~\ref{fig:dislo}, the finite word corresponding to a defect (here $00$ or $11$) does not depend only on the phase of the surrounding region but also on the position of the defect. More precisely, since $\varphi(\s(x)) = \varphi(x)+1$, a defect in position $j$ with a region in phase $f$ to its left and a defect in position $0$ with a region in phase $f+j\mod P$ to its left ``observe'' the same finite word to their left.

Therefore, we define for each defect its \define{local phases}. Assume a defect is in position $j$ surrounded by homogeneous regions $[i,j]$ and $[j,k]$ in phase $\varphi_\ell$ and $\varphi_r$, respectively. Then its \emph{left local phase} (resp. \emph{right local phase}) is $\varphi_\ell+j\mod P$, resp. $\varphi_r+j\mod P$.

Now we classify the defects according to the local phase of the surrounding regions. Let $\P = \{p_{ij}\ :\ (i,j)\in\Z/P\Z^2\}$ be the set of particles. Since defects correspond to the centre of occurrences of forbidden words and the phase of a region can be locally distinguished, the morphism $\pi : \az \to (\P\cup\{0\})$ of order $2r-2$ is defined exactly as in the interface case. The choice of local phase is unique if the region is larger than $r-1$ cells.\sk

In the general case, those two formalisms can be mixed by fixing a decomposition $\Sigma = \bigsqcup_{i\in\A}\Sigma_i$ where some of the $\Sigma_i$ have nonzero periods. We can classify defects according to the domains and local phase of the surrounding regions in a similar manner. Except for the arbitrary choices for small regions, obtaining the set of particles and the morphism from the SFT decomposition can be done in an automatic way.

\subsection{Examples}\label{sec:particlesexamples}

\subsubsection{Rule 184}\label{section.184}

We consider the rule $\#184$ or ``traffic'' automaton $F_{184}:\{0,1\}^{\Z}\to\{0,1\}^{\Z}$ defined by the following local rule: $f_{184}(x_{-1}x_0x_1)=1$ if and only if $x_0x_1=11$ or $x_{-1}x_0=10$.

The time evolution of this automaton can be seen as a road where the symbol $1$ represent vehicles and the symbol $0$ an empty space. The vehicles move forward if the cell in front of them is empty and stay put otherwise. In this context, the rule $\#184$ has been very well studied, especially in the case of initial Bernoulli measures \cite{BelitskyFerrari-1995, Belitsky-2005}. We use this example mostly as a simple case to better understand the formalism, although our method has the advantage to hold for more general probability measures.

\begin{proposition}
Let $F_{184}$ be the traffic automaton and $\mu \in \Merg$. Then:
\begin{align*}
 \mu([00])>\mu([11])&\Rightarrow 11\notin \L(\Lambda_\mu(F_{184}));\\
 \mu([00])<\mu([11])&\Rightarrow 00\notin \L(\Lambda_\mu(F_{184}));\\
 \mu([00])=\mu([11])&\Rightarrow F_{184\ast}^t\mu \to \meas{01}.
\end{align*}
\end{proposition}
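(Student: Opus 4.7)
I would take as vacuum the alternating SFT $\Sigma = \{\per{01}, \per{10}\}$, which is easily checked to be $F_{184}$-invariant. Defects with respect to $\Sigma$ are exactly the positions where the pattern $00$ or $11$ occurs. A direct simulation (as in the example I worked out by hand) shows that an isolated occurrence of $11$ is shifted one cell to the left at each step and an isolated occurrence of $00$ is shifted one cell to the right, while a $00$ facing a $11$ at close range annihilates into the alternating pattern. Using the dislocation formalism of Section~\ref{sec:defects}, this gives a coalescent particle system with two particle types $p_{11}$ and $p_{00}$, with respective speeds $-1$ and $+1$, and destructive interaction when they collide. Applying Corollary~\ref{cor:MainResult}, at most one of the two speeds can survive asymptotically, so at most one of the two patterns $00$, $11$ can appear in $\L(\Lambda_\mu(F_{184}))$.

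\textbf{Step 2: Exploit the conservation law.} Rule $\#184$ preserves the density of $1$s. Combined with $\mu([01])=\mu([10])$ (shift-invariance) and $\mu([0])+\mu([1])=1$, one gets
\[\F^t\mu([11]) - \F^t\mu([00]) \;=\; 2\F^t\mu([1])-1 \;=\; 2\mu([1])-1 \;=\; \mu([11])-\mu([00])\]
for every $t\in\N$, so this signed difference is a conserved quantity along the orbit.

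\textbf{Step 3: Conclude by case analysis.} If $\mu([00])>\mu([11])$, then suppose for contradiction that $11\in\Lambda_\mu(F_{184})$: by Step 1, this forces $\F^t\mu([00])\to 0$ while $\F^t\mu([11])$ does not tend to $0$, which contradicts the conservation identity (the difference would become positive). Hence $11\notin\Lambda_\mu(F_{184})$. The case $\mu([00])<\mu([11])$ is symmetric. In the equality case, the conservation identity forces $\F^t\mu([11])=\F^t\mu([00])$ for all $t$, so since at most one of the two speeds can have nonzero asymptotic density, \emph{both} must tend to $0$. Therefore no defect survives and $\Lambda_\mu(F_{184})\subseteq\Sigma$; since $\Lambda_\mu(F_{184})$ is a nonempty subshift and $\Sigma=\{\per{01},\per{10}\}$ is $\s$-minimal, we get equality.

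\textbf{Anticipated obstacles.} The only real work is Step~1: one must verify rigorously that the tentative update function satisfies the four axioms of a particle system, in particular the disjunction and particle-control conditions, for configurations in which several $00$'s and $11$'s happen to be close together (e.g.\ patterns like $000$ or $0011$, where more than one defect sits in the same window). Once a clean update function is written down with a fixed radius, locality and coalescence follow from a finite case check on patterns of length at most $2r+1$. Steps 2 and 3 are essentially bookkeeping, the only subtle point being that one should phrase the conservation as an identity on cylinder probabilities rather than on densities, to avoid invoking almost-sure convergence where ordinary convergence suffices.
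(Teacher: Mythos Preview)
Your proposal is correct and follows essentially the same route as the paper: build the two-particle coalescent system on the checkerboard subshift, apply Corollary~\ref{cor:MainResult} to get that only one particle type can survive, then use a conservation law to decide which one. The only notable variation is in Step~2: the paper observes directly that each collision is a one-for-one annihilation $p_{01}+p_{10}\to\emptyset$, so the difference of particle densities is preserved step by step, whereas you route the same conclusion through the number-conserving property of Rule~$\#184$ and the identity $\mu([11])-\mu([00])=2\mu([1])-1$. Both arguments are equally short and yield the same invariant; your version has the mild advantage of not requiring a full enumeration of collision patterns, while the paper's version stays entirely inside the particle formalism. Your speed assignments ($00\mapsto +1$, $11\mapsto -1$) are in fact the correct ones.
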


\begin{figure}[!ht]
\begin{center}
\includegraphics[width = 7cm]{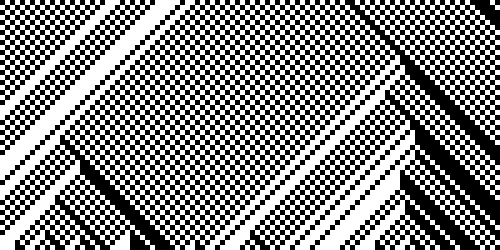}$\begin{array}{c}\to\vspace{3cm} \end{array}$\includegraphics[width =
7cm]{Particles/Factgr.png}\vspace{-1.2cm}
\end{center}
\caption{Particle system for the traffic automaton.}
\label{fig:Factors}
\end{figure}

\begin{proof}
We consider the chequerboard SFT $\Sigma = \{\per{(01)},\per{(10)}\}$, which is $2$-periodic and $F_{184}$-invariant. Using the dislocation formalism, we define the phases $\varphi(\per{(01)}) = 0$ and $\varphi(\per{(10)}) = 1$, obtaining a set of particles defined by their local phases $\{p_{01}, p_{10}\}$. The corresponding morphism of order $r=2$ is defined by the local rule:
\[\begin{array}{ccc}
         00&\to& p_{01}\\
         11&\to& p_{10}\\
         \mbox{otherwise}&\to& 0
        \end{array}.
\]
Indeed, consider $x\in\az$ with a defect $x_{01} = 00$. The phase of the $0$ in position $0$ is $0$ and the phase of the $0$ in position $1$ is $1$, so this corresponds to a particle $p_{01}$. Changing the position of the defect would not change the particle since the local phase would be modified accordingly.

The update function is defined in the intuitive manner: with $p_{01}$ evolving at speed $+1$ and $p_{10}$ at speed $-1$ and both particles being sent to $\emptyset$ in case of collision.
\[\forall x\in\az,\forall k\in\Z, \phi(x,k)=\left\{\begin{array}{cl}
\{k-1\}&\mbox{if }\pi(x)_k = p_{10} \mbox{ and }\pi(x)_{k-2}\neq p_{01}\\
\{k+1\}&\mbox{if }\pi(x)_k = p_{01} \mbox{ and }\pi(x)_{k+2}\neq p_{10}\\
\emptyset&\text{otherwise (and in particular if }\pi(x)_k=0)\end{array}\right.\]
We now check that the particle system satisfies all necessary conditions. To do that, one should verify that the update function is defined properly, that is:

\begin{align*}
\forall x\in\az,\ \forall k\in\Z,\  \pi(F(x))_{k-1}=p_{10}&\Leftrightarrow F(x)_{\{k-1,k\}} = 11\\
&\Leftrightarrow x_{[k-2,k+1]} \in \{1011, 0111, 1111\}\\
&\Leftrightarrow \pi(x)_k = p_{10}\mbox{ and }\pi(x)_{k+2}\neq p_{01},
\end{align*}
and similarly for $p_{01}$. This type of proof can become tedious due to the high number of cases but can be automated by straightforward enumeration, here of all patterns of length $4$. The different conditions follow from this property:
\begin{description}
\itemsep0em
 \item[Locality] Obvious by definition of $\phi$.
 \item[Redistribution] The claim can be restated as $\pi(F(x))_{k+1}=p_{10}\Leftrightarrow \pi(x)_k = p_{10}$ and $\phi(x,k)=\{k+1\}$, and similarly for $p_{01}$. The first condition follows. Since $\phi(x,k) = \emptyset$ when $k\notin\p(x)$ by definition of $\phi$, the second condition follows.
 \item[Disjunction] For $k<k'$, to have $\phi(x,k)>\phi(x,k')$, the only way would be to have $\pi(x)_{k'}=p_{01}$,
$\pi(x)_k=p_{10}$ and $k'=k+1$. In that case, by definition, $\phi(x,k)=\phi(x,k')=\emptyset$.
 \item[Coalescence and speeds] Obvious by definition of $\phi$.
 \end{description}

Therefore we can apply Corollary~\ref{cor:MainResult} and only one type of particle remains in $\Lambda_\mu(F_{184})$.\sk

Furthermore, since the collisions are of the form $p_{01} + p_{10} \to \emptyset$, it is clear that for all $x\in\az$, $\D_{p_{01}}(F_{184}(x)) - \D_{p_{01}}(x)=\D_{p_{10}}(F_{184}(x)) - \D_{p_{10}}(x)$. Therefore, which particle remains is decided according to whether $\mu([00])>\mu([11])$ or the opposite, both particles disappearing in case of equality. The third case follows from the fact that if $00, 11\notin\L(\Lambda_\mu(F_{184}))$, then $\Lambda_\mu(F_{184}) = \{\mathstrut^\infty 01^\infty, \mathstrut^\infty 10^\infty\}$ which support a unique measure $\meas{01}$.
\end{proof}

\subsubsection{$n$-state cyclic automaton}\label{section.cyclic}

The $n$-state cyclic automaton $C_n$ is a cellular automaton defined on the alphabet $\A = \Z/n\Z$ by the local rule \[c_n(x_{i-1}, x_i, x_{i+1}) = \left\{\begin{array}{ll}x_i+1&\mbox{if }x_{i-1} = x_i+1 \mbox{ or }x_{i+1}= x_i+1;\\x_i&\mbox{otherwise.}\end{array}\right.\]
See Figure~\ref{fig:particles} for an example of space-time diagram.\sk

This automaton was introduced by~\cite{Fisch}. In this paper, the author shows that for all Bernoulli measure $\mu$, the set $[i]_0$ (for $i\in\A$) is a $\mu$-attractor iff $n\geq5$: that is, $\mu(\{x\in\az\ :\ \exists T\in\N, \forall t\geq T, F^tx\in [i]\})>0$ for all $i$. Simulations starting from a random configuration suggest the following: for $n=3$ or $4$, monochromatic regions keep increasing in size; for $n\geq 5$, we observe the convergence to a fixed point where small regions are delimited by vertical lines. We use the main result to explain this observation.

\begin{proposition}
Define:
\begin{align*}
u_+ &= \{ab\in\A^2 : (b-a)\mod n = +1\};\\
u_- &= \{ab\in\A^2 : (b-a)\mod n = -1\};\\
u_0 &= \{ab\in\A^2 : (b-a)\mod n \notin\{-1,0,1\}\}.
\end{align*}
Then, for any measure $\mu\in\Merg((\Z/n\Z)^\Z)$, only one of those three sets may intersect the language of $\Lambda_\mu(C_n)$.\sk

If furthermore $\mu$ is a Bernoulli measure, then the persisting set can only be $u_0$.\end{proposition}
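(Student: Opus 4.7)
The plan is to construct a coalescent particle system for $C_n$ whose particles split into three speed classes, apply Corollary~\ref{cor:MainResult} for the general statement, and use a reflection-symmetry argument for the Bernoulli refinement.

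First I would apply the interface formalism of Section~\ref{sec:defects} to the $C_n$-invariant SFT $\Sigma = \bigsqcup_{i \in \Z/n\Z}\{\per{i}\}$ of monochromatic fixed points. This produces the particle set $\P = \{p_{ij} : i \neq j \in \Z/n\Z\}$ and a radius-$1$ morphism $\pi$ with $\pi(x)_k = p_{x_k x_{k+1}}$ when $x_k \neq x_{k+1}$ and $0$ otherwise. Direct inspection of the local rule $c_n$ determines each particle's speed: for a pair $(i, i+1)$ between monochromatic regions, the left $i$-cell updates to $i+1$ while the $(i+1)$-cell is fixed (no neighbor being $i+2$), so $u_+$-particles have speed $-1$; symmetrically, $u_-$-particles have speed $+1$; and for $(i,j)$ with $j \not\equiv i \pm 1 \pmod n$ neither cell sees a neighbor equal to its own successor, so $u_0$-particles are stationary.

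Next I would define the update function $\phi$ (each particle progresses at its speed unless another particle lies in its interaction neighborhood) and verify the coalescence condition by finite enumeration of local collision patterns. Representative cases include $u_- + u_+ \to \emptyset$, where $(a{+}1)\,a\,(a{+}1)$ becomes uniformly $a{+}1$, and $u_0$-involving collisions producing a single new particle, as in $(a{+}1)\,a\,d$ with $d - a \not\equiv \pm 1$ becoming $(a{+}1)\,(a{+}1)\,d$, leaving a new particle $p_{a+1,d}$. In every case strictly fewer particles remain. Since each particle has a well-defined speed in $\{-1, 0, +1\}$, Corollary~\ref{cor:MainResult} applies and yields the first assertion.

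For the Bernoulli refinement, the local rule $c_n$ depends symmetrically on its two non-central neighbors, so $C_n$ commutes with the reflection $\rho : (x_i)_{i \in \Z} \mapsto (x_{-i})_{i \in \Z}$. Every Bernoulli measure is $\rho$-invariant, hence $\F^t \mu$ is $\rho$-invariant for every $t$. Since reversing a pair $(a,b)$ flips the sign of $b - a$, $\rho$ exchanges the cylinders $[u_+]$ and $[u_-]$ up to the shift, which together with $\sigma$-invariance gives $\F^t\mu([u_+]) = \F^t\mu([u_-])$ at every $t$. Both densities share the same asymptotic limit, so if one of these classes persisted in $\Lambda_\mu(C_n)$ the other would as well, contradicting the first part. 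Hence only $u_0$ can persist.

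The main obstacle will be the coalescence verification, because $u_0$-involving collisions yield different resulting classes depending on the exact color gap (e.g. $u_- + u_0 \to u_+$ when $d = a+2$ and $u_0$ otherwise), and multi-particle neighborhoods must be handled consistently. This reduces to routine case analysis on patterns of bounded length but requires careful bookkeeping to define $\phi$ uniformly.
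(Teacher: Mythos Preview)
Your proposal is correct and follows essentially the same approach as the paper: the same interface decomposition over the monochromatic subshifts $\bigsqcup_i\{\per{i}\}$, the same three-speed particle system with coalescence verified by finite case analysis, the same application of Corollary~\ref{cor:MainResult}, and the same reflection-symmetry argument (the paper's $\gamma$ is your $\rho$) for the Bernoulli refinement. Your speed assignment ($u_+$ at speed $-1$, $u_-$ at speed $+1$) is opposite to the paper's stated convention, but your computation is the correct one for the local rule as written, and in any case only the fact that the three classes have distinct speeds matters for invoking the corollary.
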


\begin{proof}
We consider the interface defects relatively to the decomposition $\Sigma = \bigsqcup_{i\in\A}\Sigma_i$, where $\Sigma_i= \{\mathstrut^{\infty}i^{\infty}\}$. $\Sigma$ is a $C_n$-invariant SFT of radius $r=2$, and defects are exactly
transitions between colours. Thus we define $\P = \{p_{ab}\ :\ ab\in\A^2, a\neq b\}$. One cell is enough to distinguish the domains ($\alpha=1$) and we obtain a morphism $\pi$ of radius $2$ defined by the local rule:
\[       \begin{array}{ccc}
         \A^2&\to&\P\cup\{0\}\\
         a\cdot a&\mapsto& 0\\
         a\cdot b&\mapsto& p_{ab}
         \end{array}\quad\quad\mbox{for all }a,b\in\A.
\]
Simulations suggest that $p_{ab}$ evolves at constant speed $+1$ if $ab\in u_+$, $-1$ if $ab\in u_-$ and $0$ if $ab\in u_0$. Particles progress at their assigned speed unless they meet another particle, in which case they interact according to the following chemistry: 
\begin{itemize}
\item $p_{ab}+p_{ba} \to \emptyset$ (if $p_{ab}$ has speed +1);
\item $p_{ab}+p_{bc}\to p_{ac}$ (if $p_{ab}$ and $p_{bc}$ have speeds $(+1,0)$ or $(0,-1)$, only when $n\geq 4$), or
\item $p_{ab}+p_{bc}+p_{cd} \to p_{ad}$. (if $p_{ab}, p_{bc}$ and $p_{cd}$ have speeds $+1, 0, -1$ respectively, which is only possible for $n=4$).
\end{itemize}

We group together the particles of same speed, writing $p_+ = \{p_{ab}\ :\ ab\in u_+ \}$ and $p_-$ and $p_0$ similarly. Formally, for $x\in\az$ and $k\in\Z$ the update function is defined as:
\[ \phi(x,k)=\left\{\begin{array}{cl}
\{k+1\}&\mbox{if }\pi(x)_k \in p_+\mbox{ and } \left\{\begin{array}{c}
\pi(x)_{k+1} \in p_+,\mbox{ or}\\
\pi(x)_{k+1}\notin \P \mbox{ and }\pi(x)_{k+2}\notin p_-
\end{array}\right.;\\
\{k-1\}&\mbox{if }\pi(x)_k \in p_-\mbox{ and } \left\{\begin{array}{c}
\pi(x)_{k-1} \in p_-,\mbox{ or}\\
\pi(x)_{k-1}\notin \P \mbox{ and }\pi(x)_{k-2}\notin p_+;
\end{array}\right.\\
\{k\}&\mbox{if }\pi(x)_k \in p_0\mbox{ and }\pi(x)_{k+1}\notin p_- \mbox{ and }\pi(x)_{k-1}\notin p_+\\
\emptyset&\text{otherwise (and in particular if }\pi(x)_k=0).\end{array}\right.\]
As previously, we can check that the update function actually describes the dynamics of the particles. For all $x\in\Z$, we check that:
\begin{align*}
\pi(F(x))_{1}\in p_+&\Leftrightarrow F(x)_{\{1,2\}} = ab \quad \mbox{with } a = b+1\\
&\Leftrightarrow x_{[0,3]} \in 
\left\{\begin{array}{ll}
abbc&\mbox{where } c\neq a\\
abc\_&\mbox{where } b=c+1\\
dacb&\mbox{where } d\neq a+1\mbox{ and } c=b-1 
\end{array}\right.\\
&\Leftrightarrow \left\{
\begin{array}{ll}
\pi(x)_0 \in p_+\mbox{ and }\pi(x)_{1}\notin \P \mbox{ and }\pi(x)_{2}\notin p_-, \mbox{ or }\\
\pi(x)_{1} \in p_0 \mbox{ and } \pi(x)_{2} \in p_- \mbox{ with good chemistry } (p_{a,a-2} + p_{a-2,a-1} \to p_{a,a-1})
\end{array}\right.
\end{align*}

and so on for other particle types, from which we deduce the hypotheses of Corollary~\ref{cor:MainResult}. Since $[p_+] = \pi([u_+])$ and so on, we obtain the result.\sk

\textbf{If $\mu$ is a Bernoulli measure:} Consider the ``mirror'' map $\gamma((a_k)_{k\in\Z}) = (a_{-k})_{k\in\Z}$. $\gamma$ is continuous, and thus measurable. We have $\mu(\gamma([u])) = \mu([u^{-1}]) = \mu([u])$, where $(u_1\cdots u_n)^{-1} = u_n\cdots u_1$. But $\pi(x)_k\in p_+\Leftrightarrow \pi(\gamma(x))_{-k}\in p_-$, and conversely; since $F\circ\gamma = \gamma\circ F$, all measures $\F^t\mu$ are $\gamma$-invariant, and thus no particle in $p_+$ or $p_-$ can persist in $\L(\pi(\Lambda_{\mu}(F)))$ (since otherwise, the symmetrical particle would persist too).
\end{proof}

For small values of $n$ or particular initial measures, this proposition can be refined in the following manner:
\begin{description}
\item[$\bm{n=3}$] $p_0$ is empty. Given the combinatorics of collisions, where a particle in $p_+$ can only
disappear by colliding with a particle in $p_-$, we see that particles in $p_+$ persist if and only if
$\pi_\ast\mu([p_+])>\pi_\ast\mu([p_-])$, and symmetrically. In the equality case (in particular, for any Bernoulli
measure), no defect can persist in the $\mu$-limit set, which means that $\Lambda_{\mu}(F)$ is a set of
monochromatic configurations.
\item[$\bm{n=4}$] If $\mu$ is a Bernoulli measure, the result of~\cite{Fisch} shows that $[i]_0$ cannot be a
$\mu$-attractor for any $i$. In other words, for $\mu$-almost all $x$, $F^t(x)$ does not converge, which means that particles in
$p_+$ or $p_-$ cross the central column infinitely often (even though their probability to appear tends to $0$).
This could not happen if particles in $p_0$ were persisting in $\pi(\Lambda_{\mu}(F))$, and thus $\Lambda_{\mu}(F)$ is a set of
monochromatic configurations. 
\item[$\bm{n\geq5}$] If $\mu$ is a nondegenerate Bernoulli measure, the result of~\cite{Fisch} shows that $[i]_0$ is a
$\mu$-attractor for all $i$. This means that some particles in $p_0$ persist in $\pi(\Lambda_{\mu}(F))$, and any
configuration of $\Lambda_{\mu}(F)$ contains only homogeneous regions separated by vertical lines. 
\end{description}

For $n=3$ or $4$, since $\Lambda_{\mu}(F)$ is a set of
monochromatic configurations we deduce that the sequence $(F^n\mu)_{n\in\N}$ converges to a convex combination of Dirac measures. However this method does not give any insight as to the coefficient of each component. As shown in~\cite{these-benjamin}, if $\mu$ is a Bernoulli measure then \[C_{3\ast}^t\mu \underset{t\to\infty}{\longrightarrow}\mu([2])\meas{0}+\mu([0])\meas{1}+\mu([1])\meas{2}.\]

The problem is open for the 4-cyclic cellular automaton.

\subsubsection{One-sided captive cellular automata}\label{section.captive}

We consider the family of captive cellular automata $F:\az\to\az$ of neighbourhood $\{0,1\}$, which means that the local rule $f:\A^{\{0,1\}}\to\A$ satisfies $f(a_0a_1)\in\{a_0,a_1\}$. See Figure~\ref{fig:particles} for an example of space-time diagram.

\begin{proposition}
 Let $F$ be a one-sided captive automaton and $\mu\in\Merg(\az)$.
 Define:
 \begin{align*}
  u_+ &= \{ab\in\A^2\ :\ a\neq b,\ f(a,b) = a\}\\
  u_- &= \{ab\in\A^2\ :\ a\neq b,\ f(a,b) = b\}
 \end{align*}
Then either $u_+\cap\L(\Lambda_\mu(F))=\emptyset$ or $u_-\cap\L(\Lambda_\mu(F))=\emptyset$.\sk

 If moreover, for all $a,b\in\mathcal{A}$, the local rule satisfies $f(ab)=f(ba)$ and $\mu$ is a Bernoulli measure, then $\Lambda_{\mu}(F)\subseteq \{\mathstrut^\infty a^\infty\ :\ a\in\A\}$ (no particle remains).
\end{proposition}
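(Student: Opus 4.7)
The plan is to exhibit a coalescent particle system, apply Corollary~\ref{cor:MainResult}, and then handle the symmetric Bernoulli case through a reflection argument on $\F^t\mu$. Using the interface formalism with the decomposition $\Sigma = \bigsqcup_{a \in \A}\{\per{a}\}$ into monochromatic fixed points, I take as particles $\P = \{p_{ab} : a, b \in \A,\ a \neq b\}$ with morphism $\pi$ of order $2$ sending $x$ to $p_{x_k x_{k+1}}$ at coordinate $k$ (and to $0$ whenever $x_k = x_{k+1}$). Inspection of an isolated defect $\ldots aabb \ldots$ with $x_k = a$, $x_{k+1} = b$ shows that $F(x)_k = f(a,b)$ while the adjacent cells remain $a$ and $b$, so the defect stays at position $k$ if $ab \in u_+$ and moves to $k-1$ if $ab \in u_-$. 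I would define $\phi$ accordingly: a $u_+$ particle at $k$ maps to $\{k\}$ and a $u_-$ particle at $k$ maps to $\{k-1\}$, except at a collision---a $u_+$ at $k-1$ immediately to the left of a $u_-$ at $k$---where both particles share the image $\{k-1\}$ (or $\emptyset$ when no new defect is created).

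The four axioms then reduce to a length-$3$ pattern enumeration: locality with $r=1$ and particle control are immediate from the definition; disjunction holds because particles in progression have disjoint images and the only shared-image case is precisely the $u_+/u_-$ collision above; and surjectivity follows because every defect in $F(x)$ has a preimage defect in $x$. Coalescence is then clear---progression preserves particle type by construction, and the collision rule maps two particles to at most one. Since every $u_+$ particle has speed $0$ and every $u_-$ particle has speed $-1$, Corollary~\ref{cor:MainResult} applies and only one of the two speed classes can appear in $\pi(\Lambda_\mu(F))$, proving the first assertion.

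For the symmetric Bernoulli case, the plan is to show that the densities of $u_+$ and $u_-$ under $\F^t\mu$ coincide for every $t$, so that ``only one class survives'' forces both to vanish. The key step is the $\gamma$-invariance of $\F^t\mu$ under the reflection $\gamma(x)_i = x_{-i}$, which is nontrivial since $F$ itself does not commute with $\gamma$. Using $f(ab) = f(ba)$, a short computation yields $\gamma F \gamma = \sigma^{-1} F$, equivalently $F^{-1} \gamma = \gamma F^{-1} \sigma$ at the level of preimages. Combining this with $\mu \circ \gamma = \mu$ (Bernoulli measures are mirror-symmetric), $\mu \circ \sigma = \mu$, and the commutativity of $F$ with $\sigma$ gives
\[ \F\mu(\gamma U) = \mu(F^{-1}\gamma U) = \mu(\gamma F^{-1}\sigma U) = \mu(F^{-1}\sigma U) = \mu(F^{-1} U) = \F\mu(U), \]
and iteration yields $\F^t\mu \circ \gamma = \F^t\mu$ for all $t$. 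The symmetry of $f$ provides the bijection $ab \leftrightarrow ba$ between $u_+$ and $u_-$, so $\pi(x)_0 \in u_+ \Leftrightarrow \pi(\gamma(x))_{-1} \in u_-$; combining $\gamma$- and $\sigma$-invariance of $\F^t\mu$ then yields $\F^t\mu(\{x : \pi(x)_0 \in u_+\}) = \F^t\mu(\{x : \pi(x)_0 \in u_-\})$ for every $t$. The first assertion forces at least one side to tend to $0$, hence both, so no defect appears in $\Lambda_\mu(F)$ and it is contained in $\{\per{a} : a \in \A\}$.

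The main obstacles are the bookkeeping for disjunction when several particles are close together, and the conjugation identity $\gamma F \gamma = \sigma^{-1} F$, which is what compensates for the asymmetry of $F$ and makes $\F^t\mu$ reflection-invariant despite this asymmetry.
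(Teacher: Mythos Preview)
Your proof is correct and follows the same strategy as the paper: interface particles over the monochromatic decomposition, speeds $0$ and $-1$, Corollary~\ref{cor:MainResult}, then a mirror argument for the symmetric Bernoulli case. Your treatment is in fact more careful than the paper's in two places: your update function keeps the shared image $\{k-1\}$ when a $u_+/u_-$ collision leaves a surviving defect $p_{ac}$ (the paper sends both to $\emptyset$, which breaks surjectivity when $a\neq c$), and you correctly establish $\gamma F\gamma=\sigma^{-1}F$ rather than the paper's assertion $F\circ\gamma=\gamma\circ F$, which is false for a one-sided rule---the extra shift is exactly what $\sigma$-invariance of $\mu$ absorbs.
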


\begin{proof}
We consider the interface defects relative to the decomposition $\Sigma = \bigsqcup_{i\in\A}\Sigma_i$ where $\Sigma_i = \{\mathstrut^{\infty}i^{\infty}\}$ and obtain the same particles $\P$ and morphism $\pi$ as the $n$-state cyclic automata. $p_{ab}$ evolve at speed $-1$ if $f(a,b)=b$ and $0$ if $f(a,b)=a$, and we define $p_{-1}$ and $p_0$ accordingly. The update function is defined as follows:
\[\forall x\in\az,\ \forall k\in\Z,\ \phi(x,k)=
\left\{\begin{array}{cl}
       \{k\}&\text{if } \pi(x)_k\in p_0 \mbox{ and } \pi(x)_{k+1}\notin p_{-1}\\
       \{k-1\}&\text{if }\pi(x)_k\in p_{-1} \mbox{ and } \pi(x)_{k-1}\notin p_0\\
       \emptyset&\text{otherwise}\end{array}\right.\]
As in the two previous examples, we check by enumeration of cases that the update function describes the particle dynamics on all words of length 3:
\begin{align*}
\forall x\in\az,\ \forall k\in\Z,\ F(x)_k \in p_0 &\Leftrightarrow F(x)_{[k, k+1]} = ab \mbox{ where } a\neq b\mbox{ and }f(a,b) = a\\
&\Leftrightarrow x_{[k, k+1]} = abc \mbox{ where } b=c \mbox{ or }f(b,c) = b\\
&\Leftrightarrow \pi(x)_k\in p_0 \mbox{ and } \pi(x)_{k+1}\notin p_{-1}
\end{align*}

and deduce the properties of locality, redistribution, disjunction, coalescence and speed from there. The main result implies the theorem.

\textbf{If $\mu$ is a Bernoulli measure:} Then $\mu$ is invariant under the mirror map $\gamma$ and $F\circ\gamma = \gamma\circ F$ by hypothesis. As in the previous example, we conclude that no particle can persist in $\Lambda_\mu(F)$.
\end{proof}

\subsubsection{An automaton performing random walks}\label{section.randomwalk}

Let $F$ be defined on the alphabet $\A = (\Z/2\Z)^2$ on the neighbourhood $\{-2,\dots, 2\}$ by the local rule $f$ defined as follows:
\[f : (a_{-2},b_{-2}),\dots,(a_2,b_2) \mapsto (a_{-2}+a_2, c)\mbox{ where } c = \begin{array}{l}1 \mbox{ if }(a_{-1},b_{-1}) = (1,1)\mbox{ or }(a_0,b_0) = (0,1);\\0\mbox{ otherwise.}\end{array}\]
Intuitively, the first layer performs addition mod $2$ at distance 2, while the ones on the second layer behave as particles, moving right if the first layer contains a 1 and not moving if it contains a 0. Two colliding particles simply merge.

\begin{figure}[!ht]
 \begin{center}
  \includegraphics[width = \textwidth]{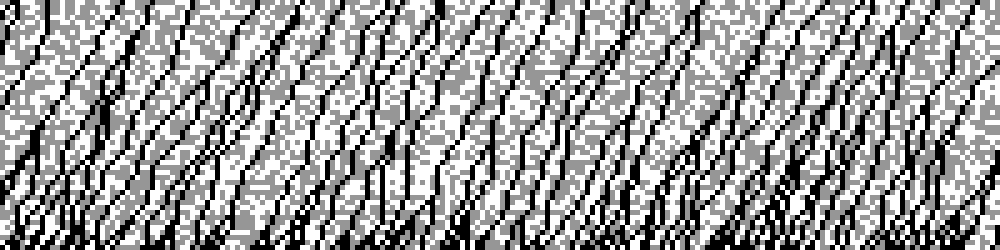}
 \end{center}
\caption{Automaton performing random walks iterated on the uniform measure. $\blacksquare$ is a particle, while the second layer is represented by $\square$ (0) or $\textcolor{gray}\blacksquare$ (1).}
\end{figure}

\begin{proposition}
Let $\nu\in\Merg((\Z/2\Z)^\Z)$ and $\mu = \lambda\times\nu$, where $\lambda$ is the uniform measure on $(\Z/2\Z)^\Z$. Then $\F^t\mu\underset{t\to\infty}\longrightarrow\lambda\times\meas{0}$.
\end{proposition}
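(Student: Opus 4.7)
The plan is to apply Theorem~\ref{prop:MainResult} to a particle system tracking the second layer and combine the resulting disappearance of particles with the invariance of $\lambda$ under the first-layer dynamics. I take $\P = \{p\}$ with the morphism $\pi(x)_k = p$ if and only if $b_k = 1$, and define the update function so that, at a particle, $\phi(x,k) = \{k+1\}$ when $a_k = 0$ and $\phi(x,k) = \{k\}$ when $a_k = 1$, with $\phi(x,k) = \emptyset$ when $\pi(x)_k = 0$. A routine enumeration of patterns of length $4$ verifies locality, surjectivity, particle control and disjunction; the only configuration in which two particles share a common image is when they occupy positions $k, k+1$ with $(a_k, a_{k+1}) = (0,1)$ and both map to $\{k+1\}$, giving a destructive interaction and establishing coalescence.

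To apply Theorem~\ref{prop:MainResult} with the trivial partition $\P = \P_1 = \{p\}$, it remains to show that $\{p\}$ clashes with itself $\mu$-almost surely. Fix $n \geq 1$ and condition on $\pi(x)_0 = \pi(x)_n = p$; consider the trajectories $P_t = \phi^t(x,0)$ and $N_t = \phi^t(x,n)$ up to the first interaction. Since the first-layer rule $a \mapsto \sigma^{-2}a + \sigma^{2}a$ is additive surjective and evolves independently of the second layer, $\lambda$ is invariant and the first layer $a^t$ is $\lambda$-distributed at every time $t$. A short spatial-independence check shows that the step $1 - a_{P_t}^t$ is Bernoulli$(1/2)$ and independent of the past trajectory of $P_t$, similarly for $N_t$, and that the two steps are independent whenever $P_t \neq N_t$. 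Consequently the gap $D_t = N_t - P_t$ evolves on $\{1, 2, \dots\}$ as a lazy symmetric random walk with step distribution on $\{-1, 0, +1\}$ of respective probabilities $1/4, 1/2, 1/4$, and each visit to the state $1$ produces an interaction with conditional probability $1/4$. Recurrence of the walk plus a Borel-Cantelli-type argument on these independent interaction events forces absorption in finite time $\mu$-almost surely.

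Theorem~\ref{prop:MainResult} then gives $p \notin \L(\pi(\Lambda_\mu(F)))$, so the probability under $\F^t\mu$ that any fixed cell contains a particle tends to $0$; equivalently, the second-layer marginal of $\F^t\mu$ converges weakly to $\meas{0}$. Meanwhile, the first-layer marginal of $\F^t\mu$ equals $\lambda$ for every $t$, since the first layer evolves independently of the second under an additive rule preserving $\lambda$. Because one marginal converges to a Dirac mass, a standard weak-convergence argument shows that the joint measure $\F^t\mu$ converges to the product of its limit marginals, namely $\lambda \times \meas{0}$.

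The main obstacle is the rigorous justification of the random-walk picture in the second step: two nearby particles share the same underlying first-layer noise, so one must track carefully how $a_i^t$ depends on $a^0$ to confirm that the ``fresh'' randomness driving each particle at each step is genuinely independent of the past trajectory and of the other particle's step. Because the additive rule has spatial range $2$, the dependence spreads only slowly, which should make the argument workable, but the close-range regime (when $D_t \in \{1, 2\}$) and the conditional-independence justification for the Borel--Cantelli step are the delicate points.
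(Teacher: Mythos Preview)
Your plan coincides with the paper's: the same one-particle system, the same application of Theorem~\ref{prop:MainResult} with $\P$ clashing with itself, and the same two-marginal argument for the limit. The only substantive gap is the one you name in your last paragraph, and it is genuine: knowing that $a^t$ has law $\lambda$ for each fixed $t$ does not by itself give independence of $a^t_{P_t}$ from the past, since $P_t$ is measurable with respect to $a^0,\dots,a^{t-1}$, which together determine $a^t$.

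The paper resolves this by working directly with the initial configuration via the explicit formula for the linear rule $a\mapsto\sigma^{-2}a+\sigma^2a$ over $\Z/2\Z$, namely $a_j^t=\sum_{n=0}^t\binom{t}{n}\,a^0_{\,j-2t+4n}\pmod 2$. In the increment $\delta_t$ of the gap, the two extreme initial bits $a^0_{N_t+2t}$ and $a^0_{P_t-2t}$ each occur with coefficient~$1$ in exactly one of the terms $a^t_{N_t},\,a^t_{P_t}$ and with coefficient~$0$ in the other, and neither appears in any earlier $\delta_{t'}$: the particle displaces by at most one cell per step while the dependence cone of the linear rule widens by two, so these indices are strictly beyond anything previously read. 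Under $\mu=\lambda\times\nu$ the $(a^0_i)$ are i.i.d.\ fair coins, so these two fresh bits force $a^t_{N_t}$ and $a^t_{P_t}$ to be uniform on $\{0,1\}$, independent of each other and of $(\delta_{t'})_{t'<t}$. This yields exactly your lazy symmetric increment law, uniformly in the current gap (no separate close-range analysis is needed), and recurrence of the unbiased walk on $\Z$ makes $D_t$ hit $0$ almost surely. Since hitting $0$ \emph{is} the interaction, the Borel--Cantelli detour is unnecessary.
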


\begin{proof}
Pivato's formalism is not necessary here. Consider the set of particles $\P = \{1\}$ and the morphism $\pi$ that is the projection on the second layer. The update function is defined as:
\[\forall x\in\az,\ \forall k\in\Z,\ \phi(x,k) = \left\{\begin{array}{cc}
               \{k+1\}&\mbox{if }x_k = (1,1);\\
               \{k\}&\mbox{if }x_k = (0,1);\\
               \emptyset&\mbox{otherwise.}
              \end{array}\right.
\]
Checking locality, redistribution, disjunction and coalescence is trivial here. Intuitively, each particle performs a random walk with independent steps and bias $\frac 12$. Thus Corollary~\ref{cor:MainResult} is not sufficient to conclude, and we need to use the general result of Theorem~\ref{prop:MainResult} by proving that $\{1\}$ clashes with itself. \sk

Writing $(a_k^t,b_k^t) = F^t(x)_k$, we have $a_k^t = \sum_{n=0}^t\binom tna_{k-2t+4n}^0\mod 2$ by straightforward induction. Now take some configuration $x$ with a particle at position $k$ and consider $\phi^t(x,k)$ the walk performed by the particle. First we prove that the particle performs a random walk as claimed above. We have:
\begin{align*}\phi^{t+1}(x,k)-\phi^t(x,k) &= a^t_{\phi^t(x,k)}\\
&= \left(\sum_{n=0}^t\binom nt a_{\phi^t(x,k)-2t+4n}^0\right)\mod 2\\
&= \left(a_{\phi^t(x,k)-2t}^0 + \sum_{n=1}^{t-1}\binom nt a_{\phi^t(x,k)-2t+4n}^0 + a_{\phi^t(x,k)+2t}^0 \right)\mod 2.\end{align*}
In the last line, we isolated the leftmost and rightmost term. Since $\phi^t(x,k)-2t$ is strictly decreasing and $\phi^t(x,k)+2t$ is strictly increasing in $t$, these terms do not appear in any $\phi^{t'+1}(x,k)-\phi^{t'}(x,k)$ for $t'<t$. Therefore, if $x$ is drawn according to a Bernoulli measure, the value of $a_{\phi^t(x,k)\pm 2t}^0$ is independent of the value of all $\phi^{t'+1}(x,k)-\phi^{t'}(x,k)$ for $t'<t$.

Formally, the behaviour of $\phi^{t'}(x,k)$ (for $t'\leq t$) only depends on the random variables $\{a_n^0\ :\ \phi^t(x,k)-2t+1\leq n\leq \phi^t(x,k)+2t-1\}$. Let $U$ be any event in the sigma-algebra generated by these variables. Then we have:
\begin{align*}
\mu\left(\phi^{t+1}(x,k) - \phi^t(x,k) = 0\ |\ U\right) =&\ \mu\left(a_{\phi^t(x,k)-2t}^0=0 \wedge \sum_{n=1}^{t}\binom nt a_{\phi^t(x,k)-2t+4n}^0 = 0\ |\ U\right)\\&+\mu\left(a_{\phi^t(x,k)-2t}^0=1 \wedge \sum_{n=1}^{t}\binom nt a_{\phi^t(x,k)-2t+4n}^0 = 1\ |\ U\right)\\
=&\ \mu\left(a_{\phi^t(x,k)-2t}^0=0\right)\cdot \mu\left(\sum_{n=1}^{t}\binom nt a_{\phi^t(x,k)-2t+4n}^0 = 0\ |\ U\right)\\
& + \mu\left(a_{\phi^t(x,k)-2t}^0=1\right)\cdot \mu\left(\sum_{n=1}^{t}\binom nt a_{\phi^t(x,k)-2t+4n}^0 = 1\ |\ U\right)\\
=&\ \frac 12,
\end{align*}
where the second step is by independence of the leftmost term from all the other variables, and the third step uses $\mu\left(a_{\phi^t(x,k)-2t}^0=0\right) = \frac 12$ since $\mu$ is the uniform Bernoulli measure on the first component. We proved that $(\phi^t(x,k))_{t\in\N}$ is a random walk with independent steps and bias $\frac 12$.\sk

To apply the theorem, we now prove that the particle clashes with itself. Note that the random walks performed by different particles are not independent; however, we prove that they are pairwise independent.

Let $k\in\N$. We prove that, when $x$ is chosen according to $\mu_k$ the conditional measure of $\mu$ relative to the event $\pi(x)_0=\pi(x)_k=1$, $\phi^t(x,k)-\phi^t(x,0)$ performs an unbiased and independent random walk with a ``death condition'' on 0 (particle collision).
Consider the evolution of $\phi^t(x,k)-\phi^t(x,0)$ at each step:
\begin{align*}
\delta_t(x) &= (\phi^{t+1}(x,k)-\phi^t(x,k))-(\phi^{t+1}(x,0)-\phi^{t}(x,0))\\
&= \underbrace{\left(\sum_{n=0}^t\binom nt a_{\phi^t(x,k)-2t+4n}^0\mod 2\right)}_{T1}-\underbrace{\left( \sum_{n=0}^t\binom nt
a_{\phi^t(x,0)-2t+4n}^0\mod 2\right)}_{T2}
\end{align*}

Note that the leftmost term of T1 is independent from T2 and all the past values of $\phi^{t+1}(x,k)-\phi^{t}(x,k)$; similarly, the rightmost term of T2 is independent from T1 and all past values of $\phi^{t+1}(x,0)-\phi^{t}(x,0)$. By the same argument as above, T1 and T2 are each worth $0$ or $1$ with probability $\frac 12$ independently of each other and of all values of $\delta_{t'}$ for $t'<t$. We conclude that $\delta_t$ takes values $-1, 0, +1$ with probability $\frac 14, \frac 12, \frac 14$ respectively independently of all values of $\delta_{t'}$ for $t'<t$.\sk

Therefore $\phi^t(x,k)-\phi^t(x,0)$ performs an unbiased and independent random walk. This implies that $\mu_k(\{x\ :\ \forall t, \phi^t(x,k)>\phi^t(x,0)\}) = 0$ (standard result in one-dimensional random walks). Since particles cannot cross, they almost surely end up being in interaction, and therefore $\{1\}$ clashes with itself $\mu$-almost surely. Applying the theorem, we find that no particle can remain in $\Lambda_\mu(F)$.

More precisely, if we write $\pi_i$ the morphism projecting on the $i$-th coordinate, $\pi_{2\ast}\F^t\mu\to\meas{0}$.
Since the addition mod 2 automaton is surjective, it leaves the uniform measure invariant.
Therefore $\pi_{1\ast}\F^t\mu = \lambda$, and we conclude that $\F^t\mu \to \lambda\times\meas{0}$.
\end{proof}

\subsection{Probabilistic cellular automata}\label{sec:probabilist}

\subsubsection{Adaptation of our formalism for probabilistic cellular automata}

This approach can be adapted to non-deterministic cellular automata, and in particular probabilistic cellular automata. We use here a generalised version of the standard definition.
\begin{definition}

 Let $\A$ be a finite alphabet and $\Nb\subset\Z$. We define a map that applies a bi-infinite sequence of local rules to a configuration componentwise:
 \[\Phi_\Nb:\begin{array}{rl}
         (\A^{\A^{\Nb}})^\Z\times\az &\to \az\\
         ((f_i)_{i\in\Z},(x_i)_{i\in\Z}) &\mapsto (f_i((x_{i+r})_{r\in\Nb})_{i\in\Z}.
        \end{array}
\]
\end{definition}

\begin{definition}[Generalised probabilistic cellular automata]

 A \define{generalised probabilistic cellular automaton} $\tilde F$ on the alphabet $\A$ with neighbourhood $\Nb$ is defined by a measure on bi-infinite sequence of local rules $\nu\in \Ms(\aarz)$.

For a configuration $x\in\az$, $\tilde F : \az\to\Msaz$ is then defined as:
\[\mbox{For any Borel set }U,\ \tilde F(x)(U)=\int_{\aarz} 1_U(\Phi_\Nb(f,x))\ud\nu(f).\]
\end{definition}

A deterministic cellular automaton $F$ defined by a local rule $f$ corresponds in this formalism to a Dirac $\nu = \meas{f}$ (in which case the image measure is a Dirac on the image configuration), and usual probabilistic cellular automata correspond to the case where $\nu$ is a Bernoulli measure; in other words, the local rule that applies at each coordinate is drawn independently among a finite set of local rules $\A^\Nb\to\A$.

\begin{definition}[Action on the space of measures]

A generalised probabilistic cellular automaton defined by a measure $\nu\in \Ms(\aarz)$ extends naturally to an action $\tilde\F:\Msaz\to\Msaz$ by defining \[\tilde\F\mu(U) = \int_{\az}\int_{\aarz} 1_U(\Phi_\Nb(f,x))\ud\nu(f)\ud\mu(x).\]
The $\mu$-limit measures set of $\tilde F$, $\V(\tilde F, \mu)$, is the set of cluster points of the sequence $(\tilde\F^t\mu)_{t\in\N}$, and the $\mu$-limit set can be defined as \[\Lambda_\mu(\tilde F) = \overline{\bigcup_{\eta\in\V(\tilde F, \mu)}\supp\ \eta}.\] 
\end{definition}

The definitions of a particle system extend directly, except that the update function also depends on the choice of the local rules as well as on the configuration. Therefore we write $\phi(x,n,(f_i))$ instead of $\phi(x,n)$, where $x\in\az, n\in\Z$ and $(f_i)\in\aarz$, and the composition notation is simplified as follows (inductively):
\[\phi^t\left(x,n,(f^k)_{0\leq k< t}\right) = \bigcup_{m\in\phi(x,n,f^1)} \phi^{t-1}\left(\Phi_\Nb(f^1, x), m, (f^k)_{1\leq k<t}\right),\]
where each $f^t\in\aarz$ is a bi-infinite sequence of local rules.

A particle system is said to be coalescent $\nu$-almost surely if the coalescence conditions hold for all $x\in\az$ and $\nu$-almost every
$f\in\aarz$, and a particle $p\in\P$ has speed $v$ $\nu^\infty$-almost surely if the speed conditions hold for $\nu^\infty$-almost every sequence 
$(f^t)_{t\in\N}$, where $\nu^\infty$ is the product measure (i.e. each $f^t$ is drawn independently according to $\nu$).
The clashing conditions are extended similarly. \sk

\begin{theorem}[Qualitative result for probabilistic automata]\label{prop:MainResultProb}

Let $\tilde F:\az\to\Msaz$ be a probabilistic cellular automaton defined by $\nu\in\Ms(\aarz)$, $\mu$ an initial $\s$-ergodic measure and $(\P,\pi,\phi)$ a $\nu^\infty$-almost surely coalescent particle system for $\tilde F$ where $\P$ can be partitioned into sets $\P_1\dots \P_n$ such that, for any $i<j$, $\P_i$ clashes with $\P_j$ $\mu,\nu^\infty$-almost surely.

Then all particles appearing in the $\mu$-limit set belong to the same subset, i.e. there exists an $i$ such that
\[\forall p\in\P,\ p\in\L(\pi(\Lambda_\mu(F)))\Rightarrow p\in\P_i.\]
If furthermore there exists a $j$ such that $\P_j$ clashes with itself $\mu,\nu^\infty$-almost surely, then this set of particles does not appear in
the $\mu$-limit set, i.e. \[\forall p\in\P,\ p\in\L(\pi(\Lambda_\mu(F)))\Rightarrow p\notin\P_j.\]
\end{theorem}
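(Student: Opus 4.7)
The plan is to lift the proof of Theorem~\ref{prop:MainResult} to the probabilistic setting by working on the product space $\az \times (\aarz)^\N$ equipped with the measure $\mu \times \nu^\infty$. On this extended space the dynamics becomes deterministic: $(x, (f^s)_{s \geq 0}) \mapsto (\Phi_\Nb(f^0, x), (f^{s+1})_{s\geq 0})$. Writing $x^t$ for the first coordinate after $t$ steps, the law of $x^t$ under $\mu \times \nu^\infty$ is exactly $\tilde\F^t \mu$, so every statement about densities, interaction frequencies and joint occurrences used in the deterministic proof has a direct analogue here.

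First I would rerun Proposition~\ref{prop:technical} in this setting. The pointwise combinatorial bound $|\phi(x,k)| + |\phi^{-1}(\phi(x,k))| \leq 2r+2$ relies only on locality and coalescence, both of which hold $\nu^\infty$-almost surely, and so the same pigeonhole yields, for $\mu \times \nu^\infty$-almost every trajectory,
\[\D(x^{t+1}) \leq \D(x^t) - \frac{1}{r+1}\D_\inter(x^t) \quad\text{and}\quad \D_p(x^{t+1}) \leq \D_p(x^t) + \D_\inter(x^t).\]

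Next I would replay the contradiction argument of Theorem~\ref{prop:MainResult}. Summing the first inequality telescopes to $\sum_t \D_\inter(x^t) < \infty$ almost surely. Birkhoff applied to each $\pi_\ast \tilde\F^t\mu$ gives that $\tilde\F^t \mu([\P])$ decreases to some $d_\infty$ and that each sequence $(\tilde\F^t \mu([\P_i]))_t$ is Cauchy with a limit $d_i \geq 0$. Supposing $d_i, d_j > 0$ for some $i < j$ with $\P_i$ clashing $\P_j$, I fix $\varepsilon < d_i d_j / (2r+5)$, pick $T$ large, and use Birkhoff on $\pi_\ast \tilde\F^T\mu$ to find a separation $k$ at which the pattern $p_i (\P \cup \{0\})^{k-1} p_j$ has frequency at least $d_i d_j - 3\varepsilon$ in $x^T$; each such occurrence forces a later interaction by the clashing assumption, which contradicts $\sum_t \D_\inter(x^t) < \infty$. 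The self-clashing case is identical with $\P_i = \P_j$.

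The main obstacle will be handling the ergodicity of $\tilde\F^t \mu$ cleanly. The deterministic proof invokes Birkhoff with $\F^t\mu$ being $\s$-ergodic, which followed from $\mu$ being $\s$-ergodic and $F$ commuting with $\s$; in the probabilistic setting the analogous property for $\tilde\F^t\mu$ requires $\nu$ to be $\s$-ergodic on $\aarz$, so that $\mu \times \nu^\infty$ is $\s$-ergodic under the diagonal shift and its pushforward inherits ergodicity. Without this assumption one has to run the contradiction argument inside each $\s$-ergodic component of the ergodic decomposition of $\mu \times \nu^\infty$; the reduction is standard but is the technically most delicate piece of the adaptation, since all the frequency identities used above must be rephrased componentwise.
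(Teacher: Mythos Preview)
Your proposal is correct and takes essentially the same approach as the paper: the paper's own proof is a single sentence stating that the argument is exactly that of Theorem~\ref{prop:MainResult} and Corollary~\ref{cor:MainResult}, with every statement holding $\nu^\infty$-almost surely. Your product-space lift to $\az\times(\aarz)^\N$ with measure $\mu\times\nu^\infty$ makes this precise, and your discussion of the $\s$-ergodicity of $\tilde\F^t\mu$ (needed for Birkhoff) flags a genuine technical point that the paper glosses over.
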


\begin{corollary}[Main result with speedy particles - probabilistic automata]

Let $\tilde F:\az\to\az$ be a probabilistic cellular automaton defined by $\nu\in\Ms(\aarz)$, $\mu$ an initial $\s$-ergodic measure and $(\P,\pi,\phi)$ a $\nu^\infty$-almost surely coalescent particle system for $\tilde F$. Assume that each particle $p\in\P$ has speed $v_p\in\R$ $\nu^\infty$-almost
surely, then there is a speed  $v\in\R$ such that:
\[\forall p\in\P,\ p\in\L(\pi(\Lambda_\mu(F)))\Rightarrow v_p=v.\]
\end{corollary}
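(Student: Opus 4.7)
The plan is to transfer the argument for Corollary~\ref{cor:MainResult} into the probabilistic setting so that the statement reduces to a direct application of Theorem~\ref{prop:MainResultProb}. First I would enumerate the finitely many speeds $v_1 > v_2 > \dots > v_n$ occurring among particles in $\P$ and form the partition $\P = \P_{v_1} \sqcup \dots \sqcup \P_{v_n}$ with $\P_{v_i} = \{p \in \P : v_p = v_i\}$. Once this partition is in place, the conclusion of the corollary is exactly the first assertion of Theorem~\ref{prop:MainResultProb}, provided I verify that $\P_{v_i}$ clashes with $\P_{v_j}$ $\mu,\nu^\infty$-almost surely whenever $i < j$ (no self-clashing needs to be checked here, since within a block all particles travel at the same speed).

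The heart of the argument is this clashing verification. I would fix $i < j$, particles $p_i \in \P_{v_i}$, $p_j \in \P_{v_j}$ and a distance $m \in \N^\ast$, and consider a configuration $x$ with $\pi(x)_0 = p_i$ and $\pi(x)_m = p_j$ together with a random sequence of local rules $(f^t)_{t \in \N}$ drawn from $\nu^\infty$. By the almost-sure speed hypothesis applied to each of the two starting positions, outside a $\nu^\infty$-null set each of the two particles falls into one of the two alternatives of the speed definition. If either alternative is eventual interaction, the clashing relation for this realisation is immediate. In the remaining scenario where both particles progress forever, the asymptotics $\phi^t(x,0,(f^s)_{s<t}) \sim v_i t$ and $\phi^t(x,m,(f^s)_{s<t}) - m \sim v_j t$ force the two trajectories to swap order at some finite time $t_0$ because $v_i > v_j$; but at the step just before $t_0$ both particles are still in progression and, by the disjunction axiom applied to their singleton images, cannot exchange positions in one step without one of them triggering an interaction. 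This contradiction rules out the remaining scenario, and so one of the two particles must interact in finite time.

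The main technical obstacle I anticipate is the bookkeeping of the various almost-sure quantifiers. The speed hypothesis is phrased for each fixed starting configuration and position, while the clashing condition required by Theorem~\ref{prop:MainResultProb} demands a single $\mu \otimes \nu^\infty$-full-measure event on which the implication holds uniformly in the parameters. I would handle this by a Fubini-style argument: taking a countable union of null sets indexed by the pairs $(p_i,p_j) \in \P_{v_i} \times \P_{v_j}$ and the distance $m \in \N^\ast$, one obtains a single full-measure event on which the clashing relation holds for all pairs and all distances simultaneously. This gluing step, although technically routine once the measurability of the trajectory maps $(x,(f^t)) \mapsto \phi^t(x,k,(f^s)_{s<t})$ is recorded, is the place where the pointwise speed hypothesis and the uniform clashing requirement of Theorem~\ref{prop:MainResultProb} must be carefully reconciled.
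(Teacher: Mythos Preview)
Your proposal is correct and follows essentially the same route as the paper: partition $\P$ by speed, then argue that two forever-progressing particles with $v_i>v_j$ would eventually swap order in contradiction with disjunction/coalescence, so one must interact, establishing the clashing hypothesis of Theorem~\ref{prop:MainResultProb}. The paper's own proof is in fact a one-line remark that the deterministic argument of Corollary~\ref{cor:MainResult} carries over with ``$\nu^\infty$-almost surely'' inserted throughout; your extra paragraph on gluing the countably many null sets via a Fubini-type argument is more explicit than the paper about this bookkeeping, but it is the same idea made precise rather than a different approach.
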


The proof of these statements are exactly the same as the proofs of Theorem~\ref{prop:MainResult} and Corollary~\ref{cor:MainResult}, except that every statement in the proof holds $\nu^\infty$-almost surely.

This Theorem and Corollary can be applied for different probabilistic cellular automata, for example when we mix two one sided captive CA (see Figure~\ref{fig:PCACaptive}). We are going to detail two examples from the literature and obtain new information about its limit measures (Section~\ref{section.FatesDensity} and~\ref{section.Regnault-Remilia-line}). 

\begin{figure}[ht]
\begin{center}
\includegraphics[width=9cm]{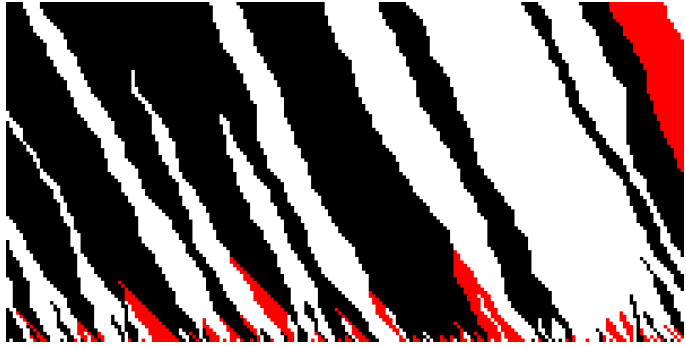}
\end{center}
 \caption{Example of probabilistic cellular automata where the update of each cell is chosen between two one sided captive CA.}
 \label{fig:PCACaptive}
\end{figure}

\subsubsection{Example: Majority-traffic PCA}\label{section.FatesDensity}

For any real $p\in[0,1]$, consider the probabilistic automaton $\tilde F$ on the alphabet $\{0,1\}$ defined on the neighbourhood $\Nb = \{-1,0,1\}$ by local rules drawn independently between the traffic rule (rule \#184 defined in Section~\ref{section.184}) with probability $p$ and the majority rule (rule \#232 where $F(x)_i=1 $ if and only if $x_{-1}+x_0+x_1\geq 2$) with  probability $1-p$. This corresponds to the case where $\nu$ is a Bernoulli measure.

This automaton was introduced by Fatès in \cite{Fates} as a candidate to solve the density classification problem. 

\begin{figure}[!ht]
\begin{center}
 \begin{tabular}{c|c|c}
  \includegraphics[width = .3\textwidth]{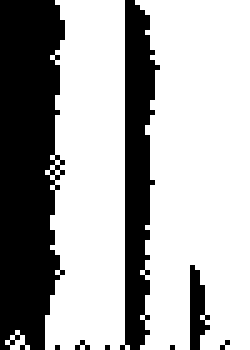}&
  \includegraphics[width = .3\textwidth]{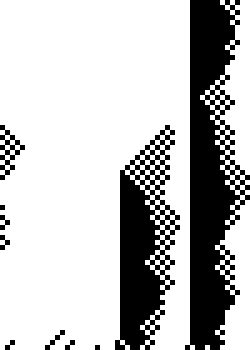}&
  \includegraphics[width = .3\textwidth]{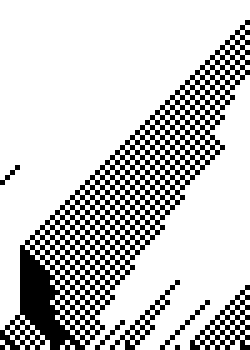}\\
  $p = \frac 14$&$p = \frac 12$&$p = \frac 34$
 \end{tabular}
\end{center}
\caption{Dynamics of the traffic-majority automaton iterated on the initial measure $\Ber(\frac 35, \frac 25)$. 
Density classification is more efficient with $p$ close to 1.}
\end{figure}

In \cite{Busic2013}, the authors completely describe the invariant measures of this PCA. In the continuity of the rest of the article, we are interested in the convergence properties of all $\s$-ergodic initial measure. None of these two results imply the other.

\begin{proposition}[Prop. 5.5 in \cite{Busic2013}]
For any $p$ in $[0,1]$, the set of $\tilde F$-invariant measures is the set of convex combinations of $\meas{0}, \meas{1}$ and $\meas{01}$.
\end{proposition}

\begin{proposition}
Let $\mu\in\Merg(\az)$ and $p$ be a real in $[0,1]$. Then \[\Lambda_\mu(\tilde{F}) \subset \{\per{0}, \per{1}, \per{(01)}, \per{(10)}\}.\]
As a consequence, any $\mu$-limit measure of $(\tilde{F}^t_\ast\mu)_{t\in\N}$ is a convex combination of $\meas{0}, \meas{1}$ and $\meas{01}$.
\end{proposition}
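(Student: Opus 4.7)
I would apply Theorem~\ref{prop:MainResultProb} to a particle system based on defects of the finite-type subshift $\Sigma=\{\per{0},\per{1},\per{(01)},\per{(10)}\}$, whose length-$3$ forbidden patterns are exactly $\{001,011,100,110\}$. Since rules $\#184$ and $\#232$ agree on six of the eight local inputs and differ only on $100$ (output $1$ for $\#184$, $0$ for $\#232$) and $110$ (output $0$ for $\#184$, $1$ for $\#232$), one checks directly that $\Sigma$ is invariant under both rules---the monochromatic configurations are fixed and the two checkerboard configurations are swapped---and hence under $\tilde F$. Applying the formalism of Section~\ref{sec:defects} to the decomposition $\Sigma=\{\per{0}\}\sqcup\{\per{1}\}\sqcup\{\per{(01)},\per{(10)}\}$ gives a particle system $(\P,\pi,\phi)$ with $\P=\{p_{001},p_{011},p_{100},p_{110}\}$, one particle per forbidden pattern.

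A local analysis of isolated defects gives the following dynamics: $p_{001}$ moves right by $1$ deterministically, $p_{011}$ moves left by $1$ deterministically, while $p_{100}$ (resp.\ $p_{110}$) performs a one-step walk with drift $2p-1$ (resp.\ $1-2p$). The key destructive interactions are the collapses of a checkerboard region squeezed between two monochromatic regions of the same colour, which annihilate two defects at once and provide coalescence. Partition $\P$ into four singletons ordered by decreasing expected drift: for $p\in(0,1)$ with $p\leq\tfrac12$, take $\P_1=\{p_{001}\},\P_2=\{p_{110}\},\P_3=\{p_{100}\},\P_4=\{p_{011}\}$, swapping $\P_2$ and $\P_3$ for $p>\tfrac12$ (the edge cases $p\in\{0,1\}$ reduce to two classes of deterministic speeds via Corollary~\ref{cor:MainResult}). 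For every $i<j$ the relative drift of $\P_j$ with respect to $\P_i$ is non-positive, so $\P_i$ clashes with $\P_j$ $\mu,\nu^\infty$-almost surely: either with strictly negative drift (meeting in finite expected time) or, at drift $0$, via one-dimensional random-walk recurrence.

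Theorem~\ref{prop:MainResultProb} then gives that at most one particle type persists in the $\mu$-limit set. To conclude, use the structural balance: every maximal checkerboard region has one left entry defect (in $\{p_{001},p_{110}\}$) and one right exit defect (in $\{p_{100},p_{011}\}$), so the density identity
\[
\D_{p_{001}}(x)+\D_{p_{110}}(x)=\D_{p_{100}}(x)+\D_{p_{011}}(x)
\]
holds identically and passes to every limit measure as a cylinder-measure identity. If only one particle type had positive asymptotic density, this identity would force that density to vanish---a contradiction---so no defect persists and $\Lambda_\mu(\tilde F)\subseteq\Sigma$. Since the four elements of $\Sigma$ are $\sigma$-periodic, $\meas{0},\meas{1},\meas{01}$ are the only $\sigma$-ergodic measures supported on $\Sigma$, which yields the consequence on limit measures. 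The hardest step is the rigorous verification of clashing when $p_{001}$ and $p_{011}$ are adjacent and form a stable $\per{0}$--$\per{1}$ wall that, in isolation, never interacts; one handles this by invoking $\s$-ergodicity of $\mu$ to guarantee that such walls are almost surely eventually disturbed by neighbouring random-walk defects inherited from the initial measure, producing the required interaction.
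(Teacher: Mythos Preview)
Your overall strategy---defects of the SFT $\Sigma$, apply Theorem~\ref{prop:MainResultProb}, then a combinatorial finish---is the same as the paper's, and your closing density identity $\D_{p_{001}}+\D_{p_{110}}=\D_{p_{100}}+\D_{p_{011}}$ is a clean alternative to the paper's measure-theoretic endgame (the paper instead argues that a single surviving interface type forces configurations with at most one defect, hence measure zero by $\sigma$-invariance).

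The substantive gap is in your particle system and the clashing verification. The paper does \emph{not} use the four forbidden-pattern defects; it uses the interface particles $p_{02},p_{20},p_{01},p_{12},p_{21}$, where the stable wall $\ldots 000111\ldots$ is encoded as a \emph{single} particle $p_{01}$ of speed $0$, and the explosive wall $\ldots 111000\ldots$ is pre-split into $p_{12}+p_{20}$. This is precisely to avoid the problem you flag in your last sentence: in your encoding, the pair $(p_{001},p_{011})$ that arises from $0011$ is two adjacent particles that sit forever without any destructive interaction. To keep coalescence you must declare both to be in progression (each fixed, type preserved), but then $\{p_{001}\}$ does \emph{not} clash with $\{p_{011}\}$ in isolation, and your claims that $p_{001}$ ``moves right by $1$ deterministically'' and $p_{011}$ ``moves left by $1$ deterministically'' are false as stated. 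The same obstruction contaminates other pairs: for instance, a $p_{001}$ at $0$ and a $p_{110}$ at $n$ necessarily have a $p_{011}$ between them, and your $p_{001}$ may get permanently trapped against that $p_{011}$ in progression without ever entering $\inter$.

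Your proposed fix---``ergodicity guarantees the wall is eventually disturbed''---is not a step one can wave through. You would need to prove, for \emph{every} $\sigma$-ergodic $\mu$ and $\nu^\infty$-a.e.\ rule sequence, that at least one of the two trapped defects is eventually hit by a third particle; this requires controlling the random walks of all neighbouring $p_{100}$/$p_{110}$ and their interactions along the way, for all drift regimes, including ruling out that they annihilate elsewhere first. The paper sidesteps all of this: with $p_{01}$ a genuine speed-$0$ particle, the needed clashes reduce to comparing a constant-speed particle against a biased or recurrent one-dimensional walk, which is exactly what is verified there. Unless you carry out the disturbance argument in full, your partition does not satisfy the hypotheses of Theorem~\ref{prop:MainResultProb}.
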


\begin{proof}
The cases $p=0,1$ correspond to deterministic automata and can be treated easily.\sk

The visual intuition suggests to consider interface defects according to the decomposition $\Sigma_0\sqcup\Sigma_1\sqcup\Sigma_2$, where
$\Sigma_0 = \{\per{0}\}$, $\Sigma_1 = \{\per{1}\}$ (monochromatic subshifts) and $\Sigma_2 = \{ \per{(01)}, \per{(10)}\}$ (chequerboard subshift),
since those SFTs are invariant under the action of both rules. The set of particles would be $\P = \{p_{i,j} : i\neq j\in\{0,1,2\}\}$.

However, as Figure~\ref{fig:trafmaj} shows, the particle $p_{10}$ can ``explode'' and give birth to two particles $p_{12}$ and
$p_{20}$, contradicting the condition of coalescence. To solve this problem, we tweak the particle system by replacing
each particle $p_{10}$ by one particle $p_{12}$ and one particle $p_{20}$.

\begin{center}
\captionsetup{type=figure}
\begin{tabular}{cl}
\hspace{-5.8cm}
\includegraphics[width=\textwidth, trim = 0 -25 0 0]{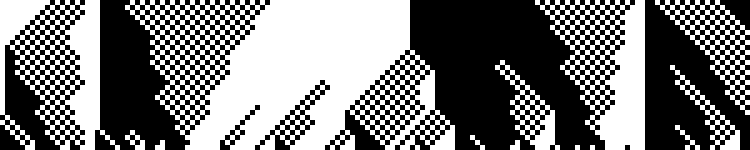}&
\hspace{-12.5cm}
 \begin{tikzpicture}
  \draw (1.4, -.8) node {\footnotesize{``explosion''}};
  \draw[->, thick] (.8,-1) -- (0,-1.65);
  \draw (-.4,-2.2) node {\footnotesize{$p_{12}$}};
  \draw[->, thick] (-.4,-2.1) -- (-.15,-1.95);
  \draw (.2,-2.2) node {\footnotesize{$p_{20}$}};
  \draw[->, thick] (.2,-2.1) -- (-0.05,-1.95);
  \draw (2.2,0.2) node {\footnotesize{$p_{02}+p_{20} = \emptyset$}};
  \draw[->, thick] (2.2,0) -- (2.65,-.5);
  \draw (3.3,.8) node {\footnotesize{$p_{02}+p_{21} = p_{01}$}};
  \draw[->, thick] (3.5,.6) -- (4.35, .15);
  \draw (5.3,-2.1) node {\footnotesize{$p_{01}$}};
 \end{tikzpicture}
\end{tabular}
 \caption{Fatès' traffic-majority probabilistic automaton, with $p=\frac 34$.}
 \label{fig:trafmaj}
 \end{center}
 
The corresponding morphism $\pi$ is defined on the neighbourhood $\{0,\dots,3\}$ by the local rule:
\[\begin{array}{cccccc}
   0011&\mapsto&p_{01}&
\quad110\_&\mapsto&p_{12}\\
   0010&\mapsto&p_{02}&
\quad  100\_&\mapsto&p_{20}\\
1011&\mapsto&p_{21}&
  \quad \mbox{otherwise}&\mapsto&0
   \end{array}
\]
where the wildcards $\_$ can take both values.

Empirically, the particle behaviour without interactions is as follows. Regardless of the rule that is applied, $p_{01},p_{02}$ and $p_{21}$ move at a constant speed $0$, $+1$ and $-1$ respectively. A particle $p_{12}$ moves at speed $-1$ if rule $\#184$ is applied at its position and at speed $+1$ otherwise (independent random walk with bias $1-2p$), except if a particle $p_{20}$ prevents its movement to the right, in which case it does not move. The particle $p_{20}$ behaves symmetrically. As an abuse of notation, we denote for easier reading $\pi(x)_0 = p_{12}^-$ if $\pi(x)_0 = p_{12}$ and $f_k = \#184$ and so on.

Particle interactions are of the form $p_{ij}+p_{ji}\to\emptyset$, $p_{ij}+p_{jk}+p_{ki}\to\emptyset$, or $p_{ij}+p_{jk}\to p_{ik}$, although some of these can not happen. Interactions involve particles at distance at most 3.

Formally, we prove through exhaustive case enumeration of all patterns of length 7 and possible local rules that:
\begin{align*}
\pi(F(x))_0 = p_{01}\ \Longleftrightarrow &\ (\pi(x)_0 = p_{01} \band \pi(x)_{-1} \neq p_{20}^+ \band \pi(x)_2 \neq p_{12}^-)
&p_{01} \textrm{ moves at speed $0$}\\
				     & \bor (\pi(x)_{-1} = p_{02} \band \pi(x)_{1} = p_{21})
& p_{02} + p_{21} \to p_{01}\\
\pi(F(x))_0 = p_{02}\ \Longleftrightarrow &\ (\pi(x)_{-1} = p_{02} \band \pi(x)_1 \notin \{p_{21}, p_{20}^-\})
&p_{02} \textrm{ moves at speed $+1$}\\
				     & \bor (\pi(x)_0 = p_{01} \band \pi(x)_{2} = p_{12}^- \band \pi(x)_{-1} \neq p_{20}^+)
& p_{01} + p_{12} \to p_{02}\\
\pi(F(x))_0 = p_{12}\ \Longleftrightarrow &\ (\pi(x)_{-1} = p_{12}^+ \band \pi(x)_0 \notin \{p_{21}, p_{20}^-\})
&p_{12}^+ \textrm{ moves at speed $+1$}\\
				        & \bor (\pi(x)_0 = p_{12}^+ \band \pi(x)_{1} = p_{20}^-)
&p_{12}^+ \textrm{ is blocked}\\
					& \bor (\pi(x)_1 = p_{12}^- \band \pi(x)_{-1} \neq p_{01})
&p_{12}^- \textrm{ moves at speed $-1$}\\
\pi(F(x))_0 = p_{20}\ \Longleftrightarrow &\ (\pi(x)_{-1} = p_{20}^+ \band \pi(x)_1 \neq p_{01})
&p_{20}^+ \textrm{ moves at speed $+1$}\\
				     & \bor (\pi(x)_0 = p_{20}^- \band \pi(x)_{-1} = p_{12}^+)
&p_{20}^- \textrm{ is blocked}\\
				     & \bor (\pi(x)_{1} = p_{20^-} \band \pi(x)_{-1} \neq p_{02} \band \pi(x)_0 = p_{12}^+)
&p_{20}^- \textrm{ moves at speed $-1$}\\
\pi(F(x))_0 = p_{21}\ \Longleftrightarrow &\ (\pi(x)_{1} = p_{21} \band \pi(x)_{-1} \neq p_{02} \band \pi(x)_0 \neq p_{12}^+)
&p_{21} \textrm{ moves at speed $-1$}\\
				     & \bor (\pi(x)_{-1} = p_{20}^+ \band \pi(x)_0 = p_{01} \band \pi(x)_2 \neq p_{12}^-)
& p_{20} + p_{01} \to p_{21}\\
\pi(F(x))_0 = 0\qquad& \textrm{in all other cases (including other possible interactions)}
\end{align*}

Using this statement, it is straightforward though tedious to define formally the update function, and the various conditions of locality, disjunction, particle control, surjectivity and coalescence are proved similarly to the previous examples.\sk

Assume $p\geq \frac12$. We show that no particle can remain asymptotically by applying the main result on the sets $(\P_i)_{0\leq i\leq 4}$: $\{p_{02}\}$, $\{p_{20}\}$, $\{p_{01}\}$, $\{p_{12}\}$ and $\{p_{21}\}$. We need only to show the clashes relative to the second and fourth sets since all other clashes are consequences of the speed of these particles.

Let $k\in\N$ and $x$ be such that $\pi(x)_0 = p_{02}$ and $\pi(x)_k \in \{p_{12}, p_{20}\}$. Since $p_{02}$ progresses at speed 1, the distance $\phi^t(x,k)-\phi^t(x,0)$ cannot increase, and it decreases by at least one with probability $p$ (respectively $1-p$). It is clear that the particles end up in interaction $\nu^\infty$-almost surely. Showing that $p_{12}$ and $p_{20}$ clash with $p_{21}$ is symmetric.

Let $x$ be such that $\pi(x)_0 = p_{20}$ and $\pi(x)_k = p_{01}$. As long as there are no interactions, the distance $\phi^t(x,k)-\phi^t(x,0) = -\phi^t(x,0)$ performs an independent random walk of bias $2p-1$, where a increasing step is sometimes replaced by a constant step. Such a random walk reaches $0$ $\nu^\infty$-almost surely, which shows that the particles end up in interaction. The clashes between $p_{01}$ and $p_{12}$, and between $\{p_{20}\}$ and $\{p_{12}\}$, are proved in a similar manner. The same proof holds for $p\leq \frac12$ by exchanging the roles of $p_{20}$ and $p_{12}$.\sk

Applying Theorem~\ref{prop:MainResultProb}, we conclude that only one particle $p_{ij}$ can remain in the $\mu$-limit set. This result can be improved further: consider $V_k = \{ x\in\Lambda_\mu(F) : \pi(x)_k = p_{ij}\}$. Configurations in $V_k$ are of the form $y\cdot z$, where $y\in\A^{]-\infty, k]}$ is admissible for $\Sigma_i$ and $z\in\A^{[k+1, +\infty[}$ is admissible for $\Sigma_j$; in particular, they contain only one particle. For any measure $\eta\in\V(\tilde F,\mu)$, $\eta(V_k)$ is independent from $k$ by $\s$-invariance, and $\eta\left(\bigcup_kV_k\right) = \sum_k\eta(V_k)\leq 1$ by disjunction of the $(V_k)_{k\in\Z}$. Consequently, $\eta(V_k)=0$, which means $V_k \notin\supp(\eta)$. We conclude that no particle remain in the $\mu$-limit set, or in other words, $\Lambda_\mu(F) \subset \Sigma_0\cup\Sigma_1\cup\Sigma_2$.
\end{proof}

\subsubsection{Example: Approximation of a line}\label{section.Regnault-Remilia-line}

A finite word of $\{0,1\}^\ast$ can be seen as a finite curve in $\Z^2$ taking its origin in $(0,0)$, moving right on a $0$ and up on a $1$. In~\cite{Regnault-Remilia-2015}, the authors introduce for any $\alpha\in\mathbb{Q}\cap[0,1]$ a random process that, starting from a finite word $w\in \{0,1\}^\ast$ whose frequency of symbols $1$ is $\alpha$, organises bits through local flips to obtain asymptotically a discrete segment of slope $\alpha$. 

We adapt these processes so that the flips are performed in parallel and on an infinite configuration, which gives a probabilistic cellular automaton for every slope $\alpha\in\mathbb{Q}\cap[0,1]$. We consider the action of this PCA on any initial $\s$-ergodic measure satisfying $\mu([1])=\alpha$. Using Theorem~\ref{prop:MainResultProb}, we show that the sequence of measures converges towards the measure supported by a periodic configuration representing a discrete line of slope $\alpha$. To simplify the presentation, we consider here that $\alpha=\frac{1}{2}$; the method can be easily generalised to other slopes.\bigskip

Define the following local rules:
\begin{itemize}
\item $i$ is the identity;
\item $r(x_{-2}, x_{-1},x_0,x_1)=\begin{cases}x_0&\text{ if }x_{-2}x_{-1}x_0x_1=0101\textrm{ or }1010, \\
x_{-1}&\text{ otherwise};\end{cases} $
\item $\ell(x_{-1},x_0,x_1,x_2)=\begin{cases}x_{0}&\text{ if }x_{-1}x_0x_1x_2=0101\textrm{ or }1010,\\x_1&\text{ otherwise}.\end{cases} $
\end{itemize}

Let $\tilde F_{\textrm{line}}$ be a probabilistic cellular automaton (represented in Figure~\ref{fig:line}) defined by a $\s$-ergodic measure $\nu\in\mathcal{M}_{\s}(\{g_0,g_1,g_{-1}\}^{\Z})$ whose support is the subshift of finite type defined by the set of forbidden patterns \[\{ir,\ \ell\ell,\ \ell i,\ rr,\ r\ell\}.\] To put it more simply, any time the local rules in two consecutive cells are $\ell$ and $r$ (which happens with positive probability), the probabilistic CA permutes these two letters, except if they are at the centre of a four-letter words $1010$ or $0101$. In any other situation, it acts as the identity.

\begin{proposition}
Let $\mu\in\Merg(\az)$. Then:
\begin{align*}
 \mu([00])>\mu([11])&\Rightarrow 11\notin \Lambda_\mu(\tilde F_{\textrm{line}});\\
 \mu([00])<\mu([11])&\Rightarrow 00\notin \Lambda_\mu(\tilde F_{\textrm{line}});\\
 \mu([00])=\mu([11])&\Rightarrow \tilde F_{\textrm{line}}^t\mu \to \meas{01}.
\end{align*}
\end{proposition}

\begin{proof}We consider the dislocation defects with regards to the chequerboard SFT $\Sigma = \{\per{(01)},\per{(10)}\}$. As in Section~\ref{section.184}, we obtain the particles $11\mapsto p_{01}$ and $00\mapsto p_{10}$. A particle $p_{10}$ at position $0$ moves at speed $+2$ if $\ell$ is applied at position $1$, at speed $-2$ if $\ell$ is applied on position $-1$, and at speed $0$ otherwise. The particle $p_{01}$ is symmetrical and they annihilate on contact. Indeed, we check by straightforward case enumeration that:

\begin{align*}
\pi(\tilde F_{\textrm{line}}(x))_{0} = p_{10}&\Leftrightarrow \tilde F_{\textrm{line}}(x)_{[0,1]} = 00\\
&\Leftrightarrow \left\{\begin{array}{ll}
x_{[0,1]} = 00 &\mbox{ and } f_{-1},\ x_{-1}\neq (\ell,1),\ (f_1,x_2)\neq (\ell,1) \\
x_{[-2,-1]} = 00 &\mbox{ and } f_{-1} = \ell,\ x_1 = 0\\
x_{[2,3]} = 00 &\mbox{ and } f_{1} = \ell,\ x_0 = 0
\end{array}\right.\\
&\Leftrightarrow \left\{\begin{array}{ll}
\pi(x)_{0} = p_{10} &\mbox{ at speed } 0 \\
\pi(x)_{-2} = p_{10} &\mbox{ at speed } +2 \mbox{ and } \pi(x)_{0} \neq p_{10}\\
\pi(x)_{2} = p_{10} &\mbox{ at speed } -2 \mbox{ and } \pi(x)_{0} \neq p_{10}
\end{array}\right.,
\end{align*}
and similarly for $p_{01}$. From this we deduce the various hypotheses of theorem, including $\nu^\infty$-almost sure clashing which stems from the fact that particles perform random walks. The exact statement of the proposition follows through the same arguments as in Section~\ref{section.184}; in particular, the third case corresponds to the discrete line of slope $\frac{1}{2}$.\end{proof}

\begin{figure}[h!]
\begin{center}
\includegraphics[width=9cm]{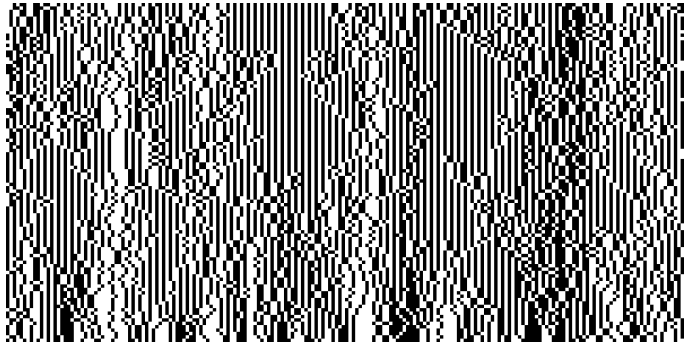}
\end{center}
 \caption{Example of a space-time diagram of $\tilde F_{\textrm{line}}$, where $\nu$ is the Markov measure maximising the entropy of the subshift of finite type defined by the forbidden patterns $\{f_0f_{-1},\ f_1f_1,\ f_1f_0,\ f_{-1}f_{-1},\ f_{-1}f_{1}\}$.}
 \label{fig:line}
\end{figure}


\newcommand\Mix{\mathcal Mix}
\newcommand\G{G_\ast}

\section{Particle-based organisation: quantitative results}\label{sec:brown}

For some cellular automata with simple defect dynamics, the previous results can be refined with a quantitative approach: that is, to determine the asymptotic distribution of random variables related to the particles. In \cite{EntryTimes}, P. K\r urka, E. Formenti and A. Dennunzio considered $T_n(x)$, the entry time after time $n$ on  the initial configuration $x$, which is the waiting time before a particle appears in a given position after time $n$. They restricted their study to a gliders automaton, which is a cellular automaton on 3 states: a background state and two particles evolving at speeds 0 and -1 that annihilate on contact. Thus, we have one entry time for each type of particle ($T_n^+(x)$ and $T_n^-(x)$). When the initial configuration is drawn according to the Bernoulli measure of parameters $(\frac 12, 0, \frac 12)$, which means that each cell contains, independently, a particle of each type with probability $\frac 12$, they proved that: \[\forall \alpha\in\R^+,~\mu\left(\frac {T_n^-(x)}n
\leq \alpha\right)\underset{n\to\infty}{\longrightarrow}\frac 2\pi \arctan\sqrt \alpha.\]
They also suggested to develop formal tools in order to be able to handle more complex automata, starting with the $(-1,1)$ symmetric case.

In Section~\ref{sec:entry}, we extend this result to allow arbitrary values for the particle speeds $v_-$ and $v_+$, and relax the conditions on the initial measure to some mixing conditions. Then, when $v_-< 0$ and $v_+\geq 0$, we have:
\[\forall \alpha\in\R^+,~\mu\left(\frac{T_n^-(x)}n\leq \alpha\right) \underset{n\to\infty}{\longrightarrow} \frac
2\pi\arctan\left(\sqrt{\frac{-v_-\alpha}{v_+-v_-+v_+\alpha}}\right),\] and symmetrically if we exchange $+$ and $-$. The proof relies on the fact that the behaviour of gliders automata can be characterised by some random walk process; this idea was introduced by V. Belitsky and P. Ferrari in \cite{BelitskyFerrari-1995} and was already used in \cite{Kurka-Maass-2000} and \cite{EntryTimes}. In our case, a particle appearing in a position corresponds to a minimum between two concurrent random walks. The new tool here is that under $\alpha$-mixing conditions, we rescale this process and approximate it with a Brownian motion. Thus we obtain the explicit asymptotic distribution of entry times.

This method, consisting in associating a random walk to each gliders automata and studying this random walk using scale invariance, is not limited to this particular conjecture concerning entry times. Indeed, we see in the next two sections that it can be used to study the asymptotic behaviour of two other, arguably more natural, parameters: the particle density at time $t$ and the rate of convergence to the limit measure. However, we obtain only an upper bound instead of an explicit asymptotic distribution. There is no doubt this method can be adapted to other parameters in a similar way.

Furthermore, these results can be extended to other automata with similar behaviour, such as those in Figure~\ref{fig:particles}, by factorising them onto a gliders automaton. This point is discussed in Section~\ref{sec:extensions}. This method is more difficult to generalise when there is birth of particle, even in a simple case such as the $4$-cyclic cellular automaton.

\subsection{Gliders automata and random walks}\label{sec:walks}

In this section we give the definition and the first properties of the class of gliders automata.

\begin{definition}[Gliders automata]

 Let $v_-<v_+ \in \Z$. The \define{$(v_-,v_+)$-gliders automaton} (or GA) $G$ is the cellular automaton of neighbourhood
$[-v_+, -v_-]$ defined on the alphabet $\A = \{-1, 0, +1\}$ by the local rule:
\[f(x_{-v_+}\dots x_{-v_-}) = \left\{\begin{array}{cl}+1&\tx{if }x_{-v_+}=+1\tx{ and }\forall N\leq -v_-,\sum_{n=-v_++1}^{N}
x_n\geq 0\\-1&\tx{if }x_{-v_-}=-1\tx{ and }\forall N\geq -v_+,\sum_{n=N}^{-v_--1} x_n\leq
0\\0&\tx{otherwise.}\end{array}\right.\]
\nomentry{$(v_-,v_+)$-GA}{Gliders automaton with particle speeds $v_-,v_+$}
\end{definition}

In all the following, $\A = \{-1,0,+1\}$ and the diagrams are represented with the convention $\square = 0, \blacksquare
= +1, \textcolor{red}{\blacksquare} = -1$.
\begin{figure}[!ht]
\begin{center}
 \includegraphics[width = \textwidth, trim = 0 0 0 300, clip]{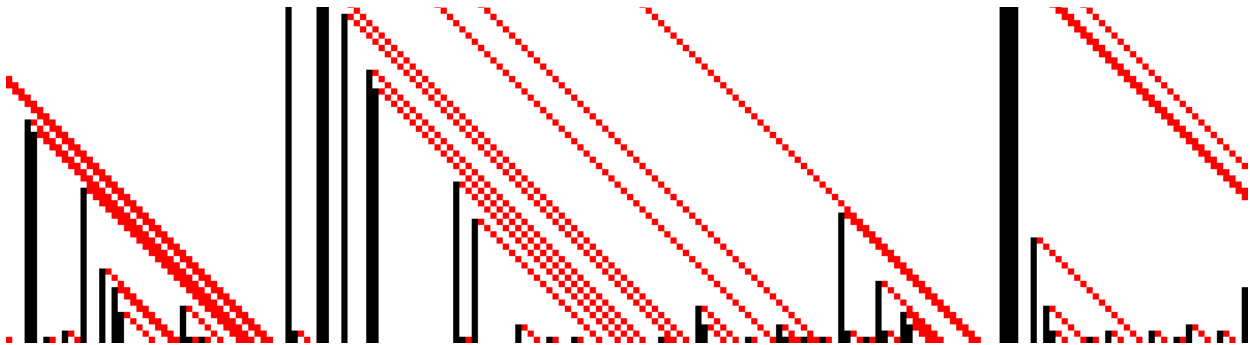}
 \caption{Space-time diagram of the $(-1,0)$-gliders automaton on a random initial configuration.}
\end{center}
\end{figure}

Our results apply on automata with simple defects dynamics, namely, automata admitting a particle system with $\P = \{\pm1\}$ and whose update function corresponds to a gliders automaton. We first prove our results for gliders automata before generalising them in Section~\ref{sec:extensions}. Let us introduce some tools that turn the study of the dynamics of a gliders automaton into the study of some random walk.

\begin{definition}[Random walk associated with a configuration]\label{def:walk}

Let $x\in\{-1,0,1\}^\Z$. Define the partial sums $S_x$ by: 
\[S_x(0) = 0\quad \mbox{and}\quad \forall k\in\Z,\ S_x(k+1)-S_x(k) = x_k.\] 

We extend $S_x$ to $\R$ by piecewise linear interpolation: $S_x(t) = (\lceil t\rceil-t)S_x(\lfloor t\rfloor) + (t-\lfloor t\rfloor)S_x(\lceil t\rceil)$ for $t\in\R\backslash \Z$. We also introduce the rescaled process $S_x^k:t\mapsto\frac{S_x(kt)}{\sqrt k}$.
\nomentry{$S_x(t)$}{Random walk associated with the configuration x}
\nomentry{$S^k_x(t)$}{Rescaled process corresponding to $S_x$}

\end{definition}

This random walk is simpler to study than the space-time diagram of the gliders automaton, and actually contains the
same amount of information, as shown by the following technical lemmas.

\begin{definition}
Let $f:\R\to\R$ and $U\subset\R$. We define $\argmin{U} f$ by:
 \[\forall t\in U,\ t= \argmin{U} f \Longleftrightarrow \forall t'\in U\backslash\{t\}, f(t) < f(t').\]
 
 In other words, $t$ realises the strict minimum of $f$ on $U$; this point is not always defined.
\end{definition}

\begin{lemma}\label{lem:MarAlea}
 Let $G$ be the $(v_-,v_+)$-gliders automaton. For all $j\in\Z$ and $n\geq1$,
\begin{align*}
 j = \argmin{[j,\ j+n]}S_{G(x)} \Longleftrightarrow j-v_+ = \argmin{[j-v_+,\ j+n-v_-]}{S_x},\\
 j = \argmin{[j-n,\ j]} S_{G(x)} \Longleftrightarrow j-v_- = \argmin{[j-n-v_+,\ j-v_-]} S_x.
\end{align*}
\end{lemma}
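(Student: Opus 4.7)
My plan is to reduce the whole lemma to a single compact identity relating $S_{G(x)}$ to a sliding-window minimum of $S_x$. Define $m(k) := \min_{l \in [k - v_+,\, k - v_-]} S_x(l)$. The key claim is that $S_{G(x)}(k) = m(k) + C$ for some constant $C$ independent of $k$. Granting this, both equivalences in the lemma reduce to straightforward geometric statements about strict minima of $S_x$ over overlapping sliding windows.

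To establish the sliding-minimum formula, it suffices to check $m(k+1) - m(k) = G(x)_k$ for every $k$. Unwinding the local rule of $G$ (which, as a byproduct, handles the base case $n = 1$ of both equivalences), one sees: $G(x)_k = +1$ iff the left endpoint $k - v_+$ is the unique strict minimizer of $S_x$ on $[k-v_+,\, k-v_- + 1]$; $G(x)_k = -1$ iff the right endpoint $k-v_- + 1$ is; and $G(x)_k = 0$ otherwise. Writing $\alpha = k-v_+$, $\beta = k-v_-$, $M = \min_{[\alpha+1, \beta]} S_x$, one has $m(k) = \min(S_x(\alpha), M)$ and $m(k+1) = \min(M, S_x(\beta+1))$; a short case analysis using the $\pm 1$ increment bound of $S_x$ matches $m(k+1) - m(k)$ to $G(x)_k$ in all three cases. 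For instance, in the $+1$ case, the chain $S_x(\alpha) < M \leq S_x(\alpha+1) \leq S_x(\alpha) + 1$ forces $M = S_x(\alpha) + 1$, while $S_x(\beta+1) > S_x(\alpha)$ gives $S_x(\beta+1) \geq M$, hence $m(k+1) - m(k) = M - S_x(\alpha) = 1$; the $-1$ case is symmetric, and in the $0$ case the increment bound rules out the subcases where $m$ would actually change.

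With the formula in hand, the first equivalence translates to: $m(j) < m(k)$ for all $k \in (j, j+n]$ iff $j - v_+$ is the strict argmin of $S_x$ on $[j-v_+, j + n - v_-]$. The forward direction is immediate, since $j - v_+$ belongs to the window for $k = j$ but to no window for $k > j$, so that $m(j) = S_x(j-v_+) < m(k)$ for each $k \in (j, j+n]$. For the converse, any minimizer of $S_x$ on $[j-v_+, j-v_-]$ other than $j - v_+$ would also lie in the $k = j+1$ window, contradicting $m(j+1) > m(j)$; hence this minimizer is uniquely $j - v_+$, so $m(j) = S_x(j-v_+)$. Then for any $l \in (j-v_+, j + n - v_-]$ one picks $k \in (j, j+n]$ with $l \in [k-v_+, k-v_-]$ (always possible by a trivial interval-intersection check), yielding $S_x(l) \geq m(k) > m(j) = S_x(j-v_+)$. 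The second equivalence is handled identically, working with the left half-window $[j-n, j]$ and the extremum at the right endpoint; alternatively, it follows from the first by the obvious reflection symmetry of the gliders construction.

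The main obstacle is the case analysis underlying the sliding-minimum formula: a priori $|m(k+1) - m(k)|$ could be as large as $v_+ - v_-$, and it is precisely the $\{-1, 0, +1\}$ increment constraint on $S_x$ that collapses this difference into $\{-1, 0, +1\}$ and forces it to equal $G(x)_k$ exactly. Once this identity is in place, the rest of the proof is purely combinatorial bookkeeping about which integer points lie in which sliding window.
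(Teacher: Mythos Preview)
Your argument is correct and takes a genuinely different route from the paper. The paper proves the lemma by induction on $n$: the base case $n=1$ is exactly your unwinding of the local rule, and the inductive step is a direct case analysis on whether the new endpoint beats the current minimum. You instead extract the structural identity $S_{G(x)}(k) = \min_{[k-v_+,\,k-v_-]} S_x + C$ (a sliding-window minimum) and reduce both equivalences to elementary bookkeeping about which points lie in which window. This is more conceptual: the identity explains \emph{why} the lemma holds rather than just verifying it step by step, and in fact it would let you prove Lemma~\ref{lem:Min} directly (iterate the sliding minimum $t$ times) without a separate induction. The paper's induction, on the other hand, avoids having to nail down the ``$0$'' case of your increment computation, which you pass over rather quickly; it does work (the $\pm 1$ increment bound forces $S_x(\alpha)=M-1$ whenever $S_x(\alpha)<M$, and symmetrically at $\beta+1$, so the only non-equal subcase is ruled out by one endpoint being a strict unique minimiser), but a reader might want that spelled out. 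One cosmetic point: you have swapped the labels ``forward'' and ``converse'' relative to the iff you wrote down---the argument you call ``forward'' actually assumes the $S_x$-side and derives the $m(j)<m(k)$ side---but both directions are present and correctly argued.
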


\begin{proof}
 We prove those equivalences by induction on $n$. At each step, we prove only the first
equivalence, the other one being symmetric.

\begin{description}
 \item[Base case]
\begin{align*}
S_{G(x)}(j) < S_{G(x)}(j+1) &\Leftrightarrow G(x)_j = +1\\
                            &\Leftrightarrow x_{j-v_+} = +1 \tx{ and } \forall N\leq -v_-,\sum_{t=-v_++1}^N x_{j+t}\geq
0\\
                            &\Leftrightarrow  S_{x}(j-v_+) < \underset{[j+1-v_+,\ j+1-v_-]}{\min} S_x.
\end{align*}
\item[Induction] Assume both equivalences hold for some $n\geq 1$.

Suppose $j = \argmin{[j,\ j+n+1]}S_{G(x)}$. In particular $j = \argmin{[j,\ j+n]}S_{G(x)}$, and by
induction hypothesis $j-v_+ = \argmin{[j-v_+,\ j+n-v_-]}{S_x}$.
We distinguish two cases:

\begin{itemize}
\item if $S_x(j+n-v_-+1) > S_x(j-v_+)$, then $j-v_+ = \argmin{[j-v_+,\ j+n-v_-+1]}{S_x}$ and we conclude;
\item otherwise, this means that $S_x(j+n-v_-+1) = S_x(j-v_+)$ (the walk can decrease by at most one at each step),
and thus \[j+n-v_-+1 = \argmin{[j-v_++1,\ j+n-v_-+1]}{S_x}.\]
By induction hypothesis, \[j+n+1 = \argmin{[j+1,\ j+n+1]}{S_{G(x)}},\]
and in particular $S_{G(x)}(j+n+1)<S_{G(x)}(j+1)$. Therefore $S_{G(x)}(j+n+1)\leq S_{G(x)}(j)$, a
contradiction with the first assumption.
\end{itemize}
The converse is proved in a similar manner.
\end{description}
\end{proof}

\begin{lemma}\label{lem:Min} Let $G$ be the $(v_-,v_+)$-gliders automaton. For all $j\in\Z$ and $k\geq 0,$
\begin{align*}G^t(x)_j= -1&\Longleftrightarrow j-v_-t+1 = \argmin{[j-v_+t,\ j-v_-t+1]}S_x\\
  G^t(x)_j= +1&\Longleftrightarrow j-v_+t = \argmin{[j-v_+t,\ j-v_-t+1]} S_x.
\end{align*}
This is illustrated in Figure~\ref{fig:LemMin}.
\end{lemma}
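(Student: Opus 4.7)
The plan is to establish both equivalences by induction on $t \geq 0$, with Lemma~\ref{lem:MarAlea} as the engine of the induction step. The two cases ($-1$ and $+1$) are fully symmetric, so I focus on the $-1$ case and derive the $+1$ case by the analogous argument using the first equivalence of Lemma~\ref{lem:MarAlea} instead of the second.

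For the base case $t = 0$, the claim reduces to an elementary observation about the increments of $S_x$. Since the step $S_x(j+1) - S_x(j) = x_j$ lies in $\{-1, 0, +1\}$, one has $x_j = -1$ if and only if $S_x(j+1) < S_x(j)$, which is exactly the condition $j + 1 = \argmin{[j,\, j+1]} S_x$. Plugging $t = 0$ into the right-hand side of the desired equivalence yields precisely this condition, establishing the base case.

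For the induction step, assume the formula holds at time $t$ for every configuration. Using $G^{t+1}(x) = G^t(G(x))$ and applying the induction hypothesis to the configuration $G(x)$ gives
\[G^{t+1}(x)_j = -1 \quad\Longleftrightarrow\quad j - v_- t + 1 = \argmin{[j - v_+ t,\ j - v_- t + 1]} S_{G(x)}.\]
To convert this argmin on $S_{G(x)}$ into an argmin on $S_x$, I would invoke the second equivalence of Lemma~\ref{lem:MarAlea} with its ``$j$'' replaced by $j - v_- t + 1$ and its width parameter set to $n = (v_+ - v_-)t + 1$ (note $n \geq 1$ since $v_+ > v_-$). Routine simplification of the interval endpoints yields the window $[j - v_+(t+1),\ j - v_-(t+1) + 1]$ and the minimizer $j - v_-(t+1) + 1$, which is precisely the claim at time $t + 1$.

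The only real obstacle is the bookkeeping of endpoint shifts at each iteration; all the conceptual content is contained in Lemma~\ref{lem:MarAlea}, which records that a single step of $G$ translates a ``left-corner'' minimum of the partial-sum walk by $v_+$ and a ``right-corner'' minimum by $v_-$. Iterating this translation $t$ times produces the trapezoidal window of width $(v_+ - v_-)t + 1$ depicted in Figure~\ref{fig:LemMin}, completing the inductive geometric picture.
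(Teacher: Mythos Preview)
Your proof is correct and follows exactly the same approach as the paper: induction on $t$, with the base case reducing to the definition of $S_x$ and the induction step applying the hypothesis to $G(x)$ followed by Lemma~\ref{lem:MarAlea}. You are simply more explicit about the endpoint bookkeeping (specifying the substitution $j \leadsto j - v_-t + 1$, $n = (v_+ - v_-)t + 1$ in the second equivalence of Lemma~\ref{lem:MarAlea}), which the paper leaves to the reader.
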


\begin{figure}[!ht]
\begin{center}
\begin{tabular}{cc}
\includegraphics{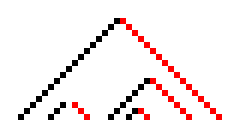}&\vspace{-4.155cm}\\
\begin{tikzpicture}
\foreach \x/\y/\z in {-2/0/0, -1/0/0, 0/0/1, 1/1/1, 2/1/1, 3/1/1, 4/1/1, 5/1/2, 6/2/2, 7/2/2, 8/2/2, 9/2/2, 10/2/2,
11/2/1, 12/1/1, 13/1/1, 14/1/1, 15/1/2, 16/2/2, 17/2/2, 18/2/3, 19/3/3, 20/3/3,  21/3/2, 22/2/2, 23/2/2, 24/2/2, 25/2/2,
26/2/2, 27/2/2, 28/2/1, 29/1/1, 30/1/1, 31/1/1, 32/1/1, 33/1/0, 34/0/0, 35/0/0, 36/0/0}
{
\draw[very thick] (-3.25+.21*\x,-1.55+.4*\y) -- (-3.25+.21*\x+.23,-1.55+.4*\z);
}
\draw (-3.7,-0.75) node {$S_x$};
\draw (-3.35,-1.8) node {\small $j-k+1$};
\draw (3.35,-1.8) node {\small $j+k$};
\draw[<->, thick] (-4, 0) -- (5,0);
\draw (-4.2,0) node {$a$};
\draw[->] (-3.5, 0) -- (-3.5,3.25);
\draw (-3.7,1.625) node {$k$};
\draw (0,-0.1) -- (0,0.1);
\draw (0, -0.3) node {\small $j$};
\filldraw[blue] (-0.13,3.2) rectangle (0.07, 3.4);
\draw[blue, thick] (-0.12,3.21) -- (-3.2,0.13) -- (-3.2, -1.55);
\draw[blue, thick] (0.06,3.21) -- (3.13,0.13) -- (3.13,-1.55) ;
\draw[red, thick] (-3.21, -1.55) -- (3.14,-1.55);
\draw (-0.5,3.7) node {\small $G^k(x)_j$};
\end{tikzpicture}&
\end{tabular}
\end{center}
\caption{Illustration of Lemma \ref{lem:Min}. A strict minimum is reached on $j-k+1$.}
\label{fig:LemMin}
\end{figure}

\begin{proof}
 By induction on $t$, proving only the first equivalence at each step:

\begin{description}
 \item[Base case  $(t=0)$] By definition of $S_x$, $S_x(j+1)<S_x(j) \Leftrightarrow x_j=-1$.

 \item[Induction] Assume that both equivalences hold for a given time $t$. By applying the induction
hypothesis on $G(x)$,
$G^{t+1}(x)_j= -1 \Leftrightarrow j-v_-t+1 = \argmin{[j-v_+t,\ j-v_-t+1]} S_{G(x)}$ and we
conclude by applying Lemma \ref{lem:MarAlea}.
\end{description}
\end{proof}

\subsection{Entry times}\label{sec:entry}

The main result of Section~\ref{sec:particles} implies that, for any $\sigma$-ergodic initial measure $\mu$, $\Lambda_G(\mu)$ contains at most one kind of particle, which one depending on whether $\mu([+1]) > \mu([-1])$ or the opposite. When $\mu([+1]) = \mu([-1])$, $\Lambda_G(\mu)$ only contains the particle-free configuration $\per{0}$. In other words, $G_\ast^t\mu \to \meas{0}$, which means that the probability of seeing a particle in any fixed finite window tends to 0 as $t\to\infty$.

\begin{definition}[Entry times]

 Let $v_-< 0 \leq v_+\in\Z$, $G$ the $(v_-,v_+)$-GA and $x \in\{-1,0,1\}^\Z$. We define: 
 \[T_n^-(x) = \min\{k\in\N\ :\ \exists i\in[0,|v_-|-1],\ G^{k+n}(x)_i=-1\},\] 
 \nomentry{$T_n^+, T_n^-$}{Waiting time after time $n$ before a particle crosses the central column}  with $T_n^-(x) = \infty$ if this set is empty. This is the \define{entry time} in the set $\{b\in\{-1,0,1\}^\Z\ :\ \exists i\in[0,|v_-|-1], b_i=-1\}$ after time $n$ at position 0 starting from $x$. We define $T_n^+(x)$ in a symmetrical manner.
\end{definition}

The size of the considered window is such that any particle ``passing through'' the column 0 appears in this window exactly once (See Figure~\ref{fig:entry}). Of course entry times for particles of speed 0 make no sense. From now on, we only consider $T^-$ for simplicity, all the results being valid for $T^+$.

\begin{figure}[h!]
\begin{center}
 \includegraphics[scale = 0.8]{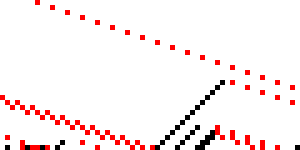}
  \hspace{-9.31cm}
\begin{tikzpicture}
\draw[blue,thick] (0,0) -- (0,4.2);
\draw[blue,thick] (0.42,0) -- (0.42,4.2);
\draw[blue,thick] (0,1) rectangle (0.42,1.14);
\draw[blue,thick] (0,1) -- (0.42,1.14) (0.42,1) -- (0,1.14);
\filldraw[blue!15!white] (0.02,1.16) rectangle (0.40,3.09);
\draw[<->, thick] (-0.2,1.14) -- (-0.2,3.1);
\draw (-0.7, 2.12) node {\small $T_n^-(x)$};
\draw[<->,thick] (-0.2,1) -- (-0.2,0);
\draw (-0.5, 0.5) node {\small $n$};
\draw (-4,-0.01) -- (5,-0.01) (-4,0.13) -- (5,0.13);
\draw (-4.25,0.2) node {\small $x$};
\end{tikzpicture}
\end{center}
\caption{An entry time for the (-3,1)-gliders automaton.}\label{fig:entry}
\end{figure}

As a consequence of Birkhoff's ergodic theorem, when $\mu([-1]) > \mu([+1])$, $-1$ particles persist $\mu$-almost surely and their density converges to a positive number. Therefore:
\begin{itemize}
\itemsep0em
 \item $\mu(T^+_n(x)=\infty) \underset{n\to\infty}{\longrightarrow} 1$;
 \item $\forall \alpha>0,\mu\left(\frac {T_n^-(x)}n \leq \alpha\right)\underset{n\to\infty}{\longrightarrow} 1$,
\end{itemize}

and symmetrically. This is why we only consider the case $\mu([-1]) = \mu([+1])$. K{\r u}rka and al. proved the following result:
\begin{theorem}[\cite{EntryTimes}]
 For the $(-1,0)$-GA (``Asymmetric gliders'') with an initial measure $\mu = \Ber\left(\frac
12,0,\frac 12\right)$:\[\forall \alpha>0,~\mu\left(\frac {T_n^-(x)}n \leq
\alpha\right)\underset{n\to\infty}{\longrightarrow}\frac 2\pi \arctan\sqrt \alpha.\]
\end{theorem}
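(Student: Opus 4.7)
The plan is to translate the entry-time event into a pathwise property of the associated random walk $S_x$ via Lemma~\ref{lem:Min}, then approximate by a Brownian motion using Donsker's invariance principle and read off the limit using L\'evy's arcsine law for the location of the Brownian minimum.

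First, specialising Lemma~\ref{lem:Min} to $(v_-,v_+)=(-1,0)$, $j=0$, and $t=k+n$ yields
\[G^{k+n}(x)_0=-1\ \Longleftrightarrow\ k+n+1=\argmin{[0,\,k+n+1]}S_x,\]
and unrolling the definition of $T_n^-(x)$ then shows that $T_n^-(x)\leq \alpha n$ iff the leftmost argmin of $S_x$ on $[0,\,n+\lfloor\alpha n\rfloor +1]$ lies strictly after $n$. Setting $N=n+\lfloor\alpha n\rfloor+1$ and denoting by $\tau_n$ this leftmost argmin, the event of interest is exactly $\{\tau_n>n\}$.

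Second, since $\mu=\Ber(\tfrac12,0,\tfrac12)$ makes the increments $(x_k)_{k\in\Z}$ of $S_x$ i.i.d.\ with mean $0$ and variance $1$, Donsker's theorem gives weak convergence of $S_x^N(t)=S_x(Nt)/\sqrt N$ to a standard Brownian motion $B$ in $C([0,1+\alpha])$. The functional $u\mapsto\argmin{[0,\,1+\alpha]}u$ is continuous at paths admitting a unique minimum, which holds Wiener-almost surely, so by the continuous mapping theorem $\tau_n/N$ converges in law to $\tau:=\argmin{[0,\,1+\alpha]}B$. Combined with $n/N\to 1/(1+\alpha)$ and the fact that the distribution of $\tau/(1+\alpha)$ is continuous, the portmanteau theorem yields
\[\mu\!\left(\tfrac{T_n^-(x)}{n}\leq \alpha\right)=\mu(\tau_n>n)\underset{n\to\infty}{\longrightarrow}\mathbb P\!\left(\tfrac{\tau}{1+\alpha}>\tfrac{1}{1+\alpha}\right).\]

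Third, by L\'evy's arcsine law $\tau/(1+\alpha)$ has density $1/(\pi\sqrt{t(1-t)})$ on $(0,1)$, whence
\[\mathbb P\!\left(\tfrac{\tau}{1+\alpha}>\tfrac{1}{1+\alpha}\right)=1-\tfrac{2}{\pi}\arcsin\sqrt{\tfrac{1}{1+\alpha}}=\tfrac{2}{\pi}\arccos\sqrt{\tfrac{1}{1+\alpha}}=\tfrac{2}{\pi}\arctan\sqrt{\alpha},\]
using the elementary identity $\arccos(1/\sqrt{1+\alpha})=\arctan\sqrt\alpha$. The main obstacle is the discrete-to-continuous passage: the discrete argmin can have ties, the threshold $n/N$ only approaches $1/(1+\alpha)$, and the ``strict new minimum'' condition coming from Lemma~\ref{lem:Min} is slightly more stringent than the Brownian analogue. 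All of these are negligible on the diffusive scale and are handled by a standard sandwich argument around the continuity point $1/(1+\alpha)$ of the law of $\tau/(1+\alpha)$, exploiting that the Brownian minimum on any compact interval is almost surely uniquely attained.
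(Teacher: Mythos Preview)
Your argument is correct, and it takes a genuinely different route from the paper's. Both proofs start from Lemma~\ref{lem:Min}, but then diverge. The paper (in its proof of the more general Theorem~\ref{thm:Entry}, of which this statement is the special case $(v_-,v_+)=(-1,0)$, $\mu\in\Ber_=$) rewrites the event as a comparison of minima of the rescaled walk on the two adjacent intervals $[0,1]$ and $[1,1+\alpha]$, invokes a strong invariance principle, splits the limiting Brownian motion at the point $1$ into two independent pieces $B^l,B^r$, and computes $\mathbb P(\min_{[0,\alpha]}B^r<\min_{[0,1]}B^l)=\tfrac{2}{\pi}\arctan\sqrt\alpha$ by an explicit double integral in polar coordinates using the density of the Brownian running minimum. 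You instead exploit the fact that, because $v_+=0$, the event is exactly ``the leftmost argmin of $S_x$ on $[0,N]$ lies past $n$'', then apply Donsker plus the continuous mapping theorem for the argmin functional, and read off the limit directly from L\'evy's arcsine law via the identity $\arccos(1/\sqrt{1+\alpha})=\arctan\sqrt\alpha$. Your route is shorter and more conceptual here, replacing an explicit computation by a named classical result; the paper's two-minima comparison, on the other hand, is designed to handle the general case $v_+>0$ uniformly, where the interval in Lemma~\ref{lem:Min} grows on \emph{both} sides with $k$ and your single-interval argmin reformulation is no longer immediate at the discrete level. One minor slip: with the rescaling $S_x^N(t)=S_x(Nt)/\sqrt N$ the natural domain is $[0,1]$, not $[0,1+\alpha]$; either rescale by $n$ (domain $[0,1+\alpha]$, threshold $1$) or by $N$ (domain $[0,1]$, threshold $1/(1+\alpha)$). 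This does not affect the argument.
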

In the same article, they conjectured that this result could be extended to any initial Bernoulli measure of parameters $(p,1-2p,p)$ for $0\leq p\leq \frac12$ by replacing the right-hand term by $\frac 2\pi \arctan\sqrt{2p\alpha}$. We will prove that this conjecture is actually incorrect.\sk

To state our result, we introduce two particular subclasses of $\Msaz$. We introduce $\alpha$-mixing coefficients of a measure $\mu\in\Msaz$:
\[\alpha_\mu(n) = \sup \{|\mu(A\cap B) - \mu(A)\mu(B)|: A\in \mathcal \Ba_{]-\infty,0]}, B\in \mathcal \Ba_{[n,+\infty[}\}.\]
where $\Ba_{[a,b]}$ is the Borel $\sigma$-algebra generated by the random variables $(X_i)_{a\leq i\leq b}$.

Define:
\begin{itemize}
\itemsep0em
 \item $\Ber_=$ the set of Bernoulli measures on $\{-1, 0, +1\}^\Z$ and parameters $(p,1-2p,p)$ for some
$0<p\leq \frac 12$;
 \item $\Mix$ the set of measures $\mu\in\Ms(\{-1, 0, +1\}^\Z)$ satisfying:
    \begin{itemize}
    \itemsep0em
    \item $\int_{\az} x_0\ \ud\mu(x)  = 0$, i.e., $\mu([-1]) = \mu([+1])$;
    \item $\sum_{k=0}^\infty \int_{\az} x_0\cdot x_k\ \ud\mu(x)$ converges absolutely to a real $\s_\mu^2>0$ (asymptotic variance);
    \item $\exists \varepsilon>0,\sum_{n\geq 0}\alpha_\mu (n)^{\frac 14 - \varepsilon} < \infty$.
    \end{itemize}
\end{itemize}
In particular, $\Ber_= \subset \Mix$. 

\begin{theorem}[Quantitative result for entry time]\label{thm:Entry}

 For any $(v_-,v_+)$-GA with $v_-< 0$ and $v_+\geq 0$ and any initial measure $\mu\in\Mix$,
\[\forall \alpha>0,~\mu\left(\frac{T_n^-(x)}n\leq \alpha\right) \underset{n\to\infty}{\longrightarrow} \frac
2\pi\arctan\left(\sqrt{\frac{-v_-\alpha}{v_+-v_-+v_+\alpha}}\right).\]
\end{theorem}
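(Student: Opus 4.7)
The plan has three stages: translate $\{T_n^-\leq\alpha n\}$ via Lemma~\ref{lem:Min} into an extremal condition on the random walk $S_x$; apply a functional invariance principle to replace $S_x$ by a Brownian motion $B$; and compute the limiting probability via the classical arcsine law.

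For the first stage, Lemma~\ref{lem:Min} identifies $\{G^{n+k}(x)_i=-1\}$ with the condition that $j := i - v_-(n+k)+1$ is the strict argmin of $S_x$ on $[i-v_+(n+k),j]$. Parameterising by $j$ (which uniquely determines $(k,i)$ with $n+k=\lceil j/|v_-|\rceil$), the left endpoint of the interval equals $jv_+/v_-$ up to a uniformly bounded error. As $(k,i)$ ranges over $[0,\lfloor\alpha n\rfloor]\times[0,|v_-|-1]$, the index $j$ sweeps a contiguous block covering $[-v_-n+1,\,-v_-(1+\alpha)n]$. Setting $a=-v_-$, $b=v_+$ and rescaling $j=sn$, the event $\{T_n^-\leq\alpha n\}$ becomes
\[
\bigl\{\exists s\in[a,a(1+\alpha)]:\ S^n_x(s)<S^n_x(t)\ \text{for all}\ t\in[-sb/a,s)\bigr\}.
\]
For the second stage, the class $\Mix$ is designed so that the hypotheses of the functional central limit theorem for $\alpha$-mixing sequences (Herrndorf; Merlev\`ede--Peligrad) are met: $\sum_n\alpha_\mu(n)^{1/4-\varepsilon}<\infty$, centering, and absolute convergence of the covariances to $\sigma_\mu^2>0$. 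Thus $S^n_x$ converges in distribution, uniformly on compact subsets of $\R$, to $\sigma_\mu B$ with $B$ a standard two-sided Brownian motion. Since argmins of $B$ on compact intervals are a.s.\ unique and lie in the interior, the event is a continuity point of the limit and the probability passes through; the scaling $\sigma_\mu$ is irrelevant as only inequalities are involved. The theorem reduces to computing
\[
P(E) := P\bigl(\exists\xi\in[a,a(1+\alpha)]:\ B(\xi)<B(t)\ \forall t\in[-\xi b/a,\xi)\bigr).
\]

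The third stage is the main probabilistic content. Shifting so that the ambient interval $[-b(1+\alpha),a(1+\alpha)]$ becomes $[0,T]$ with $T=(a+b)(1+\alpha)$, a direct check shows that whenever the global argmin of $B$ on $[0,T]$ lies in the rightmost sub-interval $[a+b(1+\alpha),T]$ of length $a\alpha$, the event $E$ is witnessed by $\xi$ equal to the argmin itself. A careful path-decomposition argument (using Williams' decomposition of $B$ at its minimum together with the reflection principle) shows that the converse holds $P$-almost surely, so $P(E)$ coincides with the arcsine probability that the argmin of $B$ on $[0,T]$ lies in that rightmost sub-interval:
\[
P(E) = \frac{2}{\pi}\arcsin\sqrt{\frac{a\alpha}{(a+b)(1+\alpha)}} = \frac{2}{\pi}\arctan\sqrt{\frac{-v_-\alpha}{v_+-v_-+v_+\alpha}},
\]
the second equality via $\arctan u = \arcsin(u/\sqrt{1+u^2})$. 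The main obstacle is precisely this converse: the family of candidate windows $[-\xi b/a,\xi]$ share a linear parameterisation in $\xi$ but have different left endpoints, so a priori a local minimum of $B$ at the right endpoint of some smaller window could witness $E$ even when the global argmin lies far to the left (in $[0,b\alpha)$). Ruling this out requires splitting $B$ at $0$ into independent right and left Brownian motions and analysing the joint law of their running minima at unequal times $\xi$ and $\xi b/a$. In the degenerate case $v_+=0$, all candidate windows share the left endpoint $0$ and this difficulty vanishes, directly recovering the K\r urka--Formenti--Dennunzio theorem as a sanity check.
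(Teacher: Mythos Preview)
Your three-stage plan is the paper's own: translate $\{T_n^-\le\alpha n\}$ via Lemma~\ref{lem:Min} into a random-walk extremum, invoke an invariance principle, and evaluate the resulting Brownian probability. The differences are cosmetic---you cite a weak functional CLT where the paper uses the strong approximation of Corollary~\ref{thm:Brown}, and you reach the arctangent through the arcsine law where the paper integrates the density of $\min_{[0,1]}B$ in polar coordinates.

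The substantive issue is the step you flag as ``the main obstacle'': the converse inclusion showing that your moving-window event
\[
E=\bigl\{\exists\,\xi\in[a,a(1+\alpha)]:\ B(\xi)<B(t)\ \text{for all}\ t\in[-\xi b/a,\xi)\bigr\}
\]
coincides a.s.\ with $\{\operatorname{argmin}_{[-b(1+\alpha),\,a(1+\alpha)]}B\in[a,a(1+\alpha)]\}$. Your Williams-decomposition sketch does not establish this, and in fact the equivalence is \emph{false} for $v_+>0$. With $a=b=1$, $\alpha=1$, split $B$ at the origin into independent halves $B^\pm$ and write $m^\pm(\xi)=-\min_{[0,\xi]}B^\pm$; then $E$ asks that $m^+(\xi)>m^-(\xi)$ at \emph{some} ladder time $\xi\in(1,2]$ of $B^+$, while the argmin event asks that $m^+(2)>m^-(2)$ (together with $m^+(2)>m^+(1)$). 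A path with $m^+(1.5)>m^-(1.5)$ and $m^+(1.5)>m^+(1)$ but $m^-(2)>m^+(2)$ lies in $E$ but not in the argmin event, and since $m^+,m^-$ are independent increasing processes this configuration has positive probability. The paper does not isolate this difficulty: it passes from the infimum formulation to the two-sided bound~(\ref{eq1}) by replacing the moving left endpoint $-v_+(1+t)$ with the fixed $-v_+(1+\alpha)$, a substitution that is justified only for the lower bound. So your argument and the paper's share the same gap when $v_+>0$; the case $v_+=0$, where all candidate windows share the left endpoint $0$, is the only one where the reduction to a fixed-interval comparison is unproblematic.
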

Notice that this limit is independent from $\mu$ (as long as $\mu\in\Mix$), disproving the conjecture when
$\mu\in\Ber_=$.

\subsection{Brownian motion and proof of the main result}\label{sec:brownian}

The third hypothesis for $\Mix$ is chosen so that the large-scale behaviour of the partial sums $S_x(t)$ can be approximated by a Brownian motion. This invariance principle is the core of our proofs.
The first and second conditions ensure that the Brownian motion obtained this way have no bias and nonzero variance, respectively.

\begin{definition}[Brownian motion]

 A \define{Brownian motion} (or \define{Wiener process}) $B$ of mean $0$ and variance $\sigma^2$ is a continuous
time stochastic process taking values in $\R$ such that:
\begin{itemize}
\renewcommand{\labelitemi}{\labelitemii}
\itemsep0em
 \item $B(0) = 0$,
 \item $t\mapsto B(t)$ is almost surely continuous,
 \item $B(t_2) - B(t_1)$ follow the normal law of mean 0 and variance $(t_2 - t_1)\s^2$;
 \item For $t_1<t_2\leq t'_1<t'_2$, increments $B(t_2) - B(t_1)$ and $B(t'_2) - B(t'_1)$ are independent.
\end{itemize}
\end{definition}
\nomentry{$B(t)$}{Brownian motion value at time $t$}
See \cite{Morters-Peres} for a general introduction to Brownian motion.

\begin{proposition}[Rescaling property]
 
 Let $B$ be a Brownian motion. Then, for any $k>0$, $t\mapsto \frac 1{\sqrt k}B(kt)$ is a Brownian motion with same mean and variance.
\end{proposition}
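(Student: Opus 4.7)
The plan is to verify directly that $B'(t) := \frac{1}{\sqrt{k}} B(kt)$ satisfies each of the four defining properties of a Brownian motion with mean $0$ and variance $\sigma^2$. Since $B$ already satisfies these properties by hypothesis, the verification amounts to tracking how the rescaling affects each property, and no genuine obstacle should arise: this is really a bookkeeping exercise on the scaling of the Gaussian law.

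First I would check the trivial items: $B'(0) = \frac{1}{\sqrt{k}} B(0) = 0$, and $t \mapsto B'(t)$ is almost surely continuous because it is the composition of the continuous map $t \mapsto kt$ with the almost surely continuous $B$, followed by multiplication by the constant $\frac{1}{\sqrt{k}}$. These two properties are immediate.

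The heart of the argument is the increment distribution. For $t_1 < t_2$, one writes
\[
B'(t_2) - B'(t_1) = \frac{1}{\sqrt{k}}\bigl(B(kt_2) - B(kt_1)\bigr),
\]
and observes that $B(kt_2) - B(kt_1)$ follows $\mathcal{N}\!\left(0, (kt_2 - kt_1)\sigma^2\right) = \mathcal{N}\!\left(0, k(t_2-t_1)\sigma^2\right)$ by the third property of $B$. Dividing a centred Gaussian by $\sqrt{k}$ divides its variance by $k$, so the increment follows $\mathcal{N}\!\left(0, (t_2-t_1)\sigma^2\right)$, which is exactly what is required.

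Finally, for disjoint intervals $t_1 < t_2 \leq t'_1 < t'_2$, the corresponding intervals $kt_1 < kt_2 \leq kt'_1 < kt'_2$ are also disjoint, so $B(kt_2) - B(kt_1)$ and $B(kt'_2) - B(kt'_1)$ are independent by the fourth property of $B$; scaling by the deterministic constant $\frac{1}{\sqrt{k}}$ preserves independence. This closes the verification. The only step requiring a moment of thought is the variance computation, which is where the factor $\frac{1}{\sqrt{k}}$ is precisely calibrated to cancel the factor $k$ coming from the time rescaling; everything else is formal.
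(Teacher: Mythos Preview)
Your verification is correct. The paper itself does not supply a proof for this proposition: it is stated as a standard fact about Brownian motion (with a reference to \cite{Morters-Peres} nearby for background) and is used without further justification. Your direct check of the four defining properties is exactly the right way to fill in this omission, and the key cancellation you identify---the $\frac{1}{\sqrt{k}}$ scaling eliminating the factor $k$ in the variance---is the only substantive point.
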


We now state some invariance principles, which consists in approximating rescaled random walks by Brownian motion.
We use a strong version, which guarantees an almost sure convergence by considering a copy of the process in a richer probability space.

\begin{theorem}[\cite{Shao-Lu2}, Corollary 9.3.1]\label{thm:Invariance}

Let $X = (X_i)_{i\in\N}$ be a family of random variables taking values in $\{-1,0,1\}$. We denote by $\alpha_X(n)$ its $\alpha$-mixing coefficients defined as:

\[\alpha_X(n) = \sup \{|P(A\cap B) - P(A)P(B)|: t\in\N, A\in \Ba_{[0,t]}, B\in \Ba_{[t+n,+\infty[}\},\]
where again $\Ba_{[a,b]}$ is the sigma-algebra generated by $(X_a,\dots, X_b)$.\sk

Assume that:
\begin{enumerate}
 \item $\forall i, \mathbb E(X_i)=0$;
 \item $\frac 1t\mathbb E\left(\sum_{i,j=1}^{\lfloor t\rfloor} X_i\cdot X_j\right)$ converges absolutely to some positive real $\s^2$;
 \item $\exists \varepsilon>0, \sum_{n=1}^\infty \alpha_X(n)^{\frac 14+\varepsilon}$.
\end{enumerate}

Then we can define two processes $X' = (X'_i)_{i\in\N}$ and $B$ on a richer probability space $(\Omega, \mathbb P)$ such that:
\begin{enumerate}
 \item $X$ and $X'$ have the same distribution;
 \item $B$ is a Brownian motion of mean 0, variance $\s^2$;
 \item for any $\varepsilon>0$,
 \[\left|\sum_{i=1}^{\lfloor t\rfloor} X_i - B(t)\right| = O\left(t^{\frac 14 + \varepsilon}\right)\quad\mathbb P\mbox{-almost surely}.\]
\end{enumerate}
\end{theorem}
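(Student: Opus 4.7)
The plan is to prove this strong invariance principle by the classical blocking-and-coupling scheme going back to Strassen, Berkes--Philipp and Philipp--Stout. The overall strategy is: (i) cut the sequence into alternating long ``big'' blocks and short ``gap'' blocks so that, by the $\alpha$-mixing hypothesis, the partial sums over the big blocks are nearly independent; (ii) on an enlarged probability space, couple each big-block sum with a Gaussian random variable having the matching variance; (iii) paste these Gaussians together into a Brownian motion and show the cumulative coupling error is $O(t^{1/4+\varepsilon})$ almost surely.

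First, I would fix sequences of block sizes $p_k$ (big) and $q_k$ (gap) tending to infinity with $q_k / p_k \to 0$, and define $Y_k = \sum_{i \in I_k} X_i$ where $I_k$ is the $k$-th big block. Using the hypothesis $\sum_n \alpha_X(n)^{1/4+\varepsilon} < \infty$ together with Davydov-type covariance inequalities, one controls $\mathrm{Var}(Y_k)$ and shows it is close to $p_k \sigma^2$; the asymptotic variance assumption (2) is exactly what guarantees this. Moments of order $2+\delta$ for $Y_k$ are also finite (in fact bounded, since $|X_i|\le 1$), which is crucial for the next step. The contribution of the gap blocks is bounded by Rosenthal's inequality for mixing sequences and is negligible provided $\sum_k q_k$ grows more slowly than the target error rate.

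Next, I would apply a Berkes--Philipp coupling (or, equivalently, a blockwise Skorokhod embedding combined with Strassen's theorem) to construct, on a richer probability space $(\Omega,\mathbb P)$, a copy $X'$ of $X$ and independent centered Gaussian variables $G_k$ with $\mathrm{Var}(G_k)= \mathrm{Var}(Y'_k)$, in such a way that $|Y'_k - G_k| = O(p_k^{1/2 - \rho})$ a.s.\ for some $\rho>0$ depending on the mixing rate and moment exponent. The independent $G_k$ can then be assembled into a single Brownian motion $B$ by Kolmogorov's extension, with $B\bigl(\sum_{j\le k} p_j\bigr) = \sum_{j\le k}G_j$ and Brownian bridges filling in between. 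Summing the coupling errors across $\approx t/p_k$ blocks and optimizing $p_k \asymp t^{1/2}$ (balanced against the mixing-rate-dependent loss) yields the claimed exponent $1/4+\varepsilon$, where the $\varepsilon$ absorbs the slack from the mixing tail $\sum \alpha_X(n)^{1/4+\varepsilon}$.

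The main obstacle is the coupling step in block (iii): one needs a quantitative approximation of a non-Gaussian sum $Y'_k$ by a Gaussian $G_k$ with an explicit rate in the Prokhorov or Wasserstein distance, and then an embedding of that coupling into the probability space in a way that is consistent across blocks. This is where the $\frac14$ exponent becomes sharp and where the hypothesis $\sum_n \alpha_X(n)^{1/4+\varepsilon}<\infty$ enters quantitatively, via Yokoyama-type Rosenthal inequalities and quantile-transform arguments; a more delicate accounting using the KMT dyadic scheme could in principle sharpen the exponent but is not needed here. The remaining pieces---tightness of the interpolated Brownian motion between block endpoints, and the final choice $p_k = \lfloor k^{1/(1-2\varepsilon)}\rfloor$---are then routine.
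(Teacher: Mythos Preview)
The paper does not prove this statement at all: Theorem~\ref{thm:Invariance} is quoted verbatim as Corollary~9.3.1 from \cite{Shao-Lu2} and used as a black box. There is therefore no ``paper's own proof'' to compare against; the authors simply invoke the result and immediately derive Corollary~\ref{thm:Brown} from it.

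That said, your sketch is broadly the right architecture for how such strong invariance principles are established in the literature (Bernstein blocking, Berkes--Philipp coupling, Strassen/Skorokhod-type embedding), and is in the spirit of the proof in \cite{Shao-Lu2}. A couple of points in your outline are imprecise enough that they would not compile into an actual proof without substantial work. First, writing ``optimizing $p_k\asymp t^{1/2}$'' conflates the fixed block-length sequence $(p_k)_k$ with the running time $t$; in the actual argument one chooses $p_k$ as a function of $k$ (typically polynomial in $k$) and then the error at time $t$ is controlled by summing over the $\approx K(t)$ blocks up to time $t$. Second, the claimed a.s.\ bound $|Y'_k-G_k|=O(p_k^{1/2-\rho})$ is not what one obtains directly from a Berkes--Philipp coupling; the coupling gives a bound in probability (or in a Ky Fan/Prokhorov metric), and turning this into an almost-sure statement requires a Borel--Cantelli argument together with summability of the error probabilities, which is where the mixing exponent $\tfrac14+\varepsilon$ is actually spent. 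Finally, the specific choice $p_k=\lfloor k^{1/(1-2\varepsilon)}\rfloor$ at the end does not match the earlier heuristic and would need to be rederived once the coupling error is stated correctly. None of this is fatal to the strategy, but the sketch as written leaves the quantitative bookkeeping (which is the entire content of the theorem) undone.
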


\begin{corollary}\label{thm:Brown}
Let $\mu \in \Mix$.
For any fixed constants $q<r\in\mathbb R$, we can define a process $X' = (X'_i)_{i\in\Z}$ 
and a family of processes $(t\mapsto B_n(t))_{n\in\N}$ on a richer probability space $(\Omega, \mathbb P)$ such that:
\begin{enumerate}
 \item $X'$ has distribution $\mu$;
 \item every $B_n$ is a Brownian motion of mean 0 and variance $\s_\mu^2>0$;
 \item for any $\varepsilon>0$, denoting by $S_{X'}$ the piecewise linear function defined by $S_{X'}(0) = 0$ and $S_{X'}(k+1)-S_{X'}(k) = X'_k$ for all $k\in\Z$,
 \[\forall n\in\N, \sup_{t\in[q,r]}\left|\frac{S_{X'}(nt)}{\sqrt{n}} - B_n(t)\right| = O\left(n^{-\frac 14 + \varepsilon}\right)\quad\mathbb P\mbox{-almost surely}.\]
\end{enumerate}
\end{corollary}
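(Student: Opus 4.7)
The plan is to apply the strong invariance principle (Theorem~\ref{thm:Invariance}) essentially as a black box, handling two technical issues: extending from a one-sided to a two-sided index set, and promoting the pointwise approximation at integer times to a uniform approximation on $[q,r]$.

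First, I would verify that the coordinate process $(x_i)_{i\in\N}$ of a measure $\mu\in\Mix$ satisfies the three hypotheses of Theorem~\ref{thm:Invariance}. Centering and $\alpha$-mixing summability are exactly the first and third conditions in the definition of $\Mix$, while the convergence of $\tfrac{1}{t}\mathbb{E}\bigl(\sum_{i,j=1}^{\lfloor t\rfloor} X_i X_j\bigr)$ to some $\s_\mu^2>0$ follows by a standard Cesàro argument from the absolute summability of the covariances $\int x_0 x_k\,d\mu$ (using $\s$-invariance to reduce a double sum to a single one). Applying the theorem yields an enlarged probability space on which a copy of $(x_i)_{i\geq 0}$ is coupled with a Brownian motion $B^+$ of variance $\s_\mu^2$, with an almost-sure error of order $O(k^{1/4+\varepsilon})$ at integer times $k$.

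Second, to cover negative indices when $q<0$, I would apply Theorem~\ref{thm:Invariance} to the reversed coordinate process $(x_{-i-1})_{i\in\N}$ --- again an element of $\Mix$ by symmetry of the three conditions --- obtaining a second Brownian motion $B^-$ coupled with the backward half of $\mu$. These two one-sided couplings are then glued on a common enlargement of $\az$ carrying the full two-sided copy $X'$ distributed as $\mu$: a measurable disintegration lets one realise the Shao--Lu couplings conditionally on $X'$, and one may further choose the two auxiliary randomness sources independent so that $B^+$ and $B^-$ are themselves independent. Setting $B(t) = B^+(t)$ for $t\geq 0$ and $B(t) = -B^-(-t)$ for $t<0$ produces a two-sided Brownian motion of variance $\s_\mu^2$ satisfying $|S_{X'}(k) - B(k)| = O(|k|^{1/4+\varepsilon})$ almost surely. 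This bilateral construction is the step I expect to demand the most care, since the forward and backward halves of $X'$ are coupled through $\mu$ and the gluing must not perturb its marginal law.

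Finally, I would set $B_n(t) := B(nt)/\sqrt n$, which by the rescaling property recalled just before Theorem~\ref{thm:Invariance} is again a Brownian motion of variance $\s_\mu^2$; observe that
\[ \frac{S_{X'}(nt)}{\sqrt n} - B_n(t) \;=\; \frac{S_{X'}(nt) - B(nt)}{\sqrt n}. \]
For integer $s$ with $|s|\leq n\max(|q|,|r|)$, the previous step gives $|S_{X'}(s) - B(s)| = O(n^{1/4+\varepsilon})$ almost surely. Between consecutive integers, $S_{X'}$ is piecewise linear of slope in $\{-1,0,1\}$, contributing an interpolation error of at most $1$, while by Lévy's uniform modulus of continuity the Brownian motion oscillates on unit subintervals of $[nq,nr]$ by at most $O(\sqrt{\log n})$ almost surely. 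Combining these bounds, taking the supremum over $t\in[q,r]$ and dividing by $\sqrt n$ yields the required $O(n^{-1/4+\varepsilon})$ estimate.
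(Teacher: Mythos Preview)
Your proposal is correct and follows essentially the same route as the paper: apply Theorem~\ref{thm:Invariance} to the forward coordinates, apply it again to the reversed coordinates $(x_{-i-1})_{i\in\N}$, glue the two halves, and rescale to define $B_n(t)=B(nt)/\sqrt n$.

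Two minor remarks. First, your discussion of the gluing is in fact more careful than the paper's, which simply asserts that the concatenated process $X'=\dots X^2_{-1},X^1_0,X^1_1,\dots$ has law $\mu$ and that the joined $B_n$ is a Brownian motion; your observation that one should realise the two couplings conditionally on a common copy of $X'$ and choose the auxiliary randomness independently is the right way to make this rigorous. Second, your interpolation step via L\'evy's modulus is a slight detour: Theorem~\ref{thm:Invariance} already bounds $\bigl|\sum_{i=1}^{\lfloor t\rfloor}X_i - B(t)\bigr|$ for \emph{all} real $t$, so it suffices to note that the piecewise-linear $S_{X'}(t)$ differs from the step function $\sum_{i=1}^{\lfloor t\rfloor}X'_i$ by at most $1$, and the uniform bound on $[q,r]$ follows directly after rescaling. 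Your argument is correct, just slightly heavier than needed.
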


\begin{proof}
We apply Theorem~\ref{thm:Invariance} on $(X_i)_{i\in\N}$, where $(X_i)_{i\in\Z}$ is distributed according to $\mu$. Because $\mu$ is $\s$-invariant,
this is a stationary process.
The first and third conditions are satisfied by definition of $\Mix$. For the second condition,
\begin{align*}\frac 1n\sum_{0\leq i,j\leq n}|\mathbb E(X_i\cdot X_j)| &= \frac 1n\sum_{i=0}^n\sum_{j=-i}^{n-i}\mathbb E(|X_0\cdot X_{j-i}|)\qquad\textrm{by stationarity}\\
&=\sum_{i=-n}^n\frac{n+1-|i|}n\mathbb E(|X_0\cdot X_{i}|)\qquad\textrm{by reordering the sum}\\
&\sim_{n\to\infty} 2\sum_{i=0}^n\mathbb E(|X_0\cdot X_{i}|) \underset{n\to\infty}\longrightarrow 2\sigma_\mu^2,
\end{align*}
by stationarity and the equivalence criterion for positive series. We obtain two processes $X^1 = (X^1_i)_{i\in\N}$ and $B^1$ on a richer probability space $(\Omega, \mathbb P)$ such that $X^1$ has the same distribution as $x$, $B^1$ is a Brownian motion of mean 0, variance $\s_\mu^2$, and:
 \[\forall \varepsilon>0, \left|\sum_{i=1}^{\lfloor t\rfloor} X^1_i - B^1(t)\right| = \underset{t\to+\infty}O\left(t^{\frac 14 + \varepsilon}\right)\quad\mathbb P\mbox{-almost surely}.\]
Since the variables $X^1_i$ take value in $\{-1,0,1\}$, we have for any $t$ $\left|\sum_{i=1}^{\lfloor t\rfloor} X^1_i - S_{X^1}(t)\right|<1$ 
(a staircase and piecewise linear function having the same values on $\N$). Therefore:
\[\forall \varepsilon>0, \left|S_{X^1}(t) - B^1(t)\right| = \underset{t\to+\infty}O\left(t^{\frac 14 + \varepsilon}\right)\quad\mathbb P\mbox{-almost surely}.\]
\[\forall \varepsilon>0, \forall n\in\N,\ \frac 1{\sqrt n}\left|S_{X^1}(tn)-B^1(tn)\right|=
\underset{n\to\infty}{O}\left(n^{-\frac 14+\epsilon}\right)\cdot\underset{t\to\infty}{O}\left(|t|^{\frac 14+\epsilon}\right) \quad\mathbb P\mbox{-almost surely}. \]
For any $r\in\R_+^2$, taking the sup for $t\in[0,r]$, we obtain:
\[\forall \varepsilon>0, \forall n\in\N,\ \sup_{t\in[0,r]}\left|\frac {S_{X^1}(tn)}{\sqrt n}-\frac {B^1(tn)}{\sqrt n}\right|=\underset{n\to\infty}{O}\left(n^{-\frac14+\epsilon}
\right)\quad\mathbb P\mbox{-almost surely}.\]
By rescaling property $B^1_n : t \mapsto \frac {B^1(tn)}{\sqrt n}$ is a Brownian motion of same mean and variance as $B^1$.\sk

To extend the result to negative values, we apply the theorem again to $(x_{-i-1})_{i\in\N}$, obtaining a process $X^2$ and a Brownian motion $B^2$
satisfying the same asymptotic bound on $t\to -\infty$. Joining both parts, we can see that the process $X' = \dots X^2_{-2}, X^2_{-1}, X^1_{0}, X^1_{1}\dots$ have distribution $\mu$
and $B_n : t\mapsto B_n^1(t)$ if $t\geq 0$, $B_n^2(t)$ if $t<0$ is a Brownian motion. 
\end{proof}

For a survey of invariance principles under different assumptions, see \cite{Merlevede-Rio}.\sk

We now prove the main theorem.

\begin{proof}[Proof of Theorem~\ref{thm:Entry}]

For any $x\in \{-1,0,1\}^\Z$, Lemma \ref{lem:Min} applied on the column 0 gives:
\begin{align*}
T_n^-(x)& =\min\left\{k\in\N\ |\ \exists j\in [0,-v_-[~, S_x(-v_-(n+k)+j+1) <\min_{[-v_+(n+k)+j,\ -v_-(n+k)+j]}S_x\right\}\\
& =\min\left\{k\in\N\ |\ \exists j\in [0,-v_-[~, S_x(-v_-(n+k)+j+1) < \min_{[-v_+(n+k)+j,\ -v_-n]}S_x\right\}
\end{align*}
Indeed, if $(k,j)$ is the smallest pair (in lexicographic order) such that $S_x(-v_-(n+k)+j+1) < \min_{[-v_+(n+k)+j,\ -v_-n]}S_x$, then all pairs $(k',j') < (k,j)$ verify $S_x(-v_-(n+k')+j'+1) \geq \min_{[-v_+(n+k)+j,\ -v_-n]}S_x$, and therefore $S_x(-v_-(n+k)+j+1) < \min_{[-v_+(n+k)+j,\ -v_-(n+k)+j]}S_x$.

Note that if this last condition is reached on $k\in\N$, since $S_x$ is piecewise linear, it is attained for $t$ as soon as $t>k-1$ and reciprocally. Thus:
\[T_n^-(x)=\inf\left\{t\geq 0\ |\ \exists j\in [0,-v_-[~, S_x(-v_-(n+t)+j+2) < \min_{[-v_+(n+t)+j+1,-v_-n]}S_x\right\}\]

Replacing $j$ by $0$ in this expression adds to the infimum a value comprised between $0$ and $\frac {-v_--1}{-v_-}$ (remember $v_-<0$). 
Since the infimum is necessarily an integer, we compensate by taking the integer part:
\begin{align*}
T_n^-(x)&=\left\lfloor\inf\left\{t\geq 0\ |\ S_x(-v_-(n+t)+2) < \min_{[-v_+(n+t)+1,-v_-n]}S_x\right\}\right\rfloor\\
&=\left\lfloor\inf\left\{t\geq 0\ |\ S_x^n\left(-v_-\left(1+\frac tn\right)+\frac2n\right) < \min_{[-v_+(1+\frac
tn)+\frac1n,-v_-]}S_x^n\right\}\right\rfloor\\
&=\left\lfloor n\cdot\inf\left\{t\geq 0\ |\ S_x^n\left(-v_-(1+t)+\frac 2n\right) <
\min_{[-v_+(1+t)+\frac1n,-v_-]}S_x^n\right\}\right\rfloor
\end{align*}
where $S_x^n$ is the rescaled process $t\mapsto\frac{S_x(nt)}{\sqrt n}$. Since $S_x$ is $1$-Lipschitz, $S_x^n$ is $\sqrt n$-Lipschitz (i.e. $|S_k^n(t')-S_k^n(t)|\leq \sqrt n|t'-t|$). Dividing the previous expression by $n$, using the fact that $t-\frac 1n \leq \frac{\lfloor nt\rfloor}n \leq t$ for all $t,n\in\R\times\N$:
\begin{align*}
\mu\left(\min_{[-v_-,-v_-(1+\alpha)]}S_x^n +\frac4{\sqrt n}< \min_{[-v_+(1+\alpha),-v_-]}S_x^n\right) \leq
\mu\left(\frac{T_n^-(x)}n \leq \alpha\right) \tag{1a}\label{eq1a}\\ \mu\left(\frac{T_n^-(x)}n \leq \alpha\right) \leq
\mu\left(\min_{[-v_-,-v_-(1+\alpha)]}S_x^n -\frac3{\sqrt n}< \min_{[-v_+(1+\alpha),-v_-]}S_x^n\right)\tag{1b}\label{eq1b}
\end{align*}

We now bound the left-hand term of (\ref{eq1a}) from below and the right-hand term of (\ref{eq1b}) from above. Using Corollary~\ref{thm:Brown}, we build a process $X'$ and a family of processes $(B_n)_{n\in\N}$ on a richer probability space $(\Omega, \mathbb P)$ such that $X'$ is distributed according to $\mu$ and the $B_n$ are Brownian motions. 
 \[\forall n\in\N, \sup_{[-v_+(1+\alpha),-v_-(1+\alpha)]}\left|\frac{S_{X'}(nt)}{\sqrt{n}} - B_n(t)\right| = 
 O\left(n^{-\frac 14 + \varepsilon}\right)\quad\mathbb P\mbox{-almost surely}.\]
By symmetry, $B_n^l(t) = B_n(-v_--t)-B_n(-v_-)$ and $B_n^r(t) =
B_n(-v_-+t)-B_n(-v_-)$ are two independent Brownian motions on $[0,\ v_--v_+(1+\alpha)]$ and $[0,\ -v_-\alpha]$, respectively. Consequently, for any
$\varepsilon >0$ and $n$ large enough:
\begin{align*}\mu\left(\min_{[-v_-,-v_-(1+\alpha)]}S_x^n +\varepsilon< \min_{[-v_+(1+\alpha),-v_-]}S_x^n\right)&= 
\mathbb P\left(\min_{[-v_-,-v_-(1+\alpha)]}S_{X'}^n +\varepsilon< \min_{[-v_+(1+\alpha),-v_-]}S_{X'}^n\right)\\
&\geq \mathbb P\left(\min_{[-v_-,-v_-(1+\alpha)]}B_n+2\varepsilon<\min_{[-v_+(1+\alpha),-v_-]}B_n\right)\\
&\geq \mathbb P\left(\min_{[0,-v_-\alpha]}B_n^l+2\varepsilon<\min_{[0,-v_-+v_+(1+\alpha)]}B_n^r\right)\tag{2a}\label{eq2}
\end{align*}

and a symmetrical upper bound for (\ref{eq1b}):
\begin{align*}\mu\left(\min_{[-v_-,-v_-(1+\alpha)]}S_x^n -\varepsilon< \min_{[-v_+(1+\alpha),-v_-]}S_x^n\right)
&\leq \mathbb P\left(\min_{[0,-v_-\alpha]}B_n^l-2\varepsilon<\min_{[0,-v_-+v_+(1+\alpha)]}B_n^r\right)\tag{2b}\label{eq2b}
\end{align*}

We now evaluate the right-hand expression in (\ref{eq2}).\sk

For any Brownian motion $B$ and $b>0$, we have by rescaling $\displaystyle\mathbb P\left(\min_{[0,b]} B\geq m\right) = 
\mathbb P\left(\min_{[0,1]} B\geq \frac m{\sqrt b}\right)$. Furthermore,
since $B_n^l$ and $B_n^r$ are independent, so are $\displaystyle\min_{[0,1]} B^l_x$ and $\displaystyle\min_{[0,1]}
B^r_x$. Denote by $\mu_m$ the law of the minimum of a Brownian motion on $[0,1]$, which is defined by the density function:
\[\begin{array}{rcll}
  \R&\to&\R&\\
  t&\mapsto &\frac{e^{-t^2}}2&\mbox{ if }t\leq 0,\\
  &&0 &\mbox{ otherwise.}
  \end{array}\quad\mbox{(see \cite{Morters-Peres}).}
\]
This means that for any $y,z>0$:

\begin{align*}
\mathbb P\left(\min_{[0,y]}B^l_n< \min_{[0,z]} B^r_n\right) &=\int_{-\infty}^0\int_{-\infty}^01_{\{\sqrt y\cdot m_1\leq \sqrt
z\cdot m_2\}}d\mu_m(m_2)d\mu_m(m_1)\\
&\underset{(i)}=\frac 4{2\pi}\int_{-\infty}^0\int_{-\infty}^{\frac {\sqrt{z}\cdot m_2}{\sqrt y}}
e^{\frac{-m_1^2}2}e^{\frac {-m_2^2}2} dm_1dm_2\\
&\underset{(ii)}=\frac 2{\pi}\int_\pi^{\pi+\arctan\left(\sqrt {\frac yz}\right)}\int_0^{+\infty} re^{\frac{-r^2}2}
drd\theta\\
&=\frac 2{\pi}\arctan\left(\sqrt {\frac yz}\right)\label{eq3}\tag{3}
\end{align*}

(i) by using the law of the minimum of a Brownian motion, (ii) by passing in polar variables. For $\varepsilon>0$, a
similar calculation gives:

\begin{align*}\mathbb P\left(\min_{[0,y]}B^l_n<\min_{[0,z]}B^r_n\right) - \mathbb P\left(\min_{[0,y]}B^l_n+2\varepsilon< \min_{[0,z]}B^r_n\right) 
&\leq \frac 4{2\pi}\int_{-\infty}^0\int_{\frac {\sqrt z\cdot m_2-2\varepsilon}{\sqrt y}}^{\frac
{\sqrt z\cdot m_2}{\sqrt y}} e^{\frac{-m_1^2}2}e^{\frac {-m_2^2}2} dm_1dm_2\\
&\leq\frac {8\varepsilon}{2\pi\sqrt y}\int_{-\infty}^0 e^{\frac{-ym_2^2}{2z}}e^{\frac {-m_2^2}2} dm_2\\
&\underset{\varepsilon \to 0}{\longrightarrow}0\label{eq4}\tag{4}
\end{align*}

Using (\ref{eq3}) and (\ref{eq4}), we see that the right-hand term in (\ref{eq2}) converges to $\frac 2\pi\arctan\left(\sqrt{\frac{-v_-\alpha}{v_+-v_-+v_+\alpha}}\right)$ as $\varepsilon\to0$. The left-hand term in (\ref{eq2b}) can be bounded from above by the same method. We apply this result to (\ref{eq1a}) and (\ref{eq1b}) by taking $\varepsilon = \frac {3}{\sqrt n}$ (resp. $\frac {4}{\sqrt n}$), and the theorem follows.
\end{proof}

\subsection{Particle density}\label{sec:density}
\begin{definition}[Particle density in a configuration]

 The \define{$-1$ particle density} in $x\in \{-1,0,1\}^\Z$ is defined as $d_-(x) = \freq(-1,x)$.
$d_+(x)$ is defined in a symmetrical manner.
\end{definition}
In all the following, any result on $d_-$ also holds for $d_+$ by symmetry.

\begin{theorem}[Decrease rate of the particle density]\label{thm:density}

Let $G$ be a $(v_-,v_+)$-GA with initial measure $\mu\in\Mix$. Then:
 \[\textrm{For $\mu$-almost all } x\in\{-1,0,1\}^\Z,\ \forall \varepsilon > 0,\ d_-(G^t(x)) = O \left(t^{-\frac 14+\varepsilon}\right)\]

If furthermore $\mu\in\Ber_=$:
 \[\textrm{For $\mu$-almost all } x\in\{-1,0,1\}^\Z,\ d_-(G^t(x)) \sim t^{-\frac 12}\]
\end{theorem}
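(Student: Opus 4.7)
The plan is to reduce both claims to an estimate of $\mu(G^t(x)_0 = -1)$, then handle that probability by the invariance principle (general case) or by classical fluctuation theory (Bernoulli case). Since $G$ commutes with $\sigma$ and $\mu$ is $\sigma$-ergodic (a consequence of the $\alpha$-mixing summability condition in $\Mix$), the push-forward $G_\ast^t\mu$ is $\sigma$-ergodic for every $t\in\N$. Birkhoff's theorem applied to $G_\ast^t\mu$ and the cylinder $[-1]_0$ yields $d_-(G^t(x)) = G_\ast^t\mu([-1]_0) = \mu(G^t(x)_0 = -1)$ for $\mu$-almost every $x$, and intersecting these full-measure sets over the countable index $t\in\N$ gives a single full set on which the identity holds for every $t$. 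By Lemma~\ref{lem:Min}, $G^t(x)_0 = -1$ is exactly the event that $S_x$ attains its strict minimum on $[-v_+ t,-v_- t+1]$ at the right endpoint; equivalently, the rescaled walk $S_x^t$ attains its minimum on $[-v_+,-v_-+1/t]$ at $-v_-+1/t$.

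For the general $\Mix$ bound, apply Corollary~\ref{thm:Brown} on the compact interval $[-v_+,-v_-+1]$ to couple a copy $X'\sim\mu$ with Brownian motions $(B_t)_{t\in\N}$ of variance $\sigma_\mu^2$ such that $\sup |S_{X'}^t(u) - B_t(u)| = O(t^{-1/4+\varepsilon})$ almost surely. Combined with $|B_t(-v_-+1/t) - B_t(-v_-)| = O(t^{-1/4})$ a.s.\ from the Brownian modulus of continuity, the event of interest forces
\[ B_t(-v_-) - \min_{[-v_+,-v_-]} B_t = O\!\left(t^{-1/4+\varepsilon}\right). \]
By the reflection principle, for a Brownian motion $B$ on $[a,b]$ the quantity $B(b) - \min_{[a,b]} B$ has a half-normal law with bounded density near zero, so $\mathbb{P}(B(b)-\min_{[a,b]}B \leq \delta) = O(\delta)$. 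Taking $\delta = Ct^{-1/4+\varepsilon}$ yields $\mu(G^t(x)_0 = -1) = O(t^{-1/4+\varepsilon})$, which via Step~1 gives the first claim.

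For the sharp $\Ber_=$ asymptotic, the increments of $S_x$ are i.i.d., mean zero, and symmetric (a lazy simple random walk with holding probability $1-2p$). Setting $N=(v_+-v_-)t+1$, time reversal identifies the event with the probability that a walk of the same law stays strictly negative on $\{1,\dots,N\}$; by Sparre Andersen's theorem combined with the local limit theorem for lazy walks, this probability is $\sim c/\sqrt{N}$, hence $d_-(G^t(x))\sim c'/\sqrt{t}$. The principal obstacle is the Brownian estimate in Step~2: a naive arcsine-law estimate on the \emph{location} of the minimum only yields $O(\sqrt{\delta})$, which would degrade the rate to $t^{-1/8+\varepsilon}$. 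The tight $O(\delta)$ bound, obtained by controlling the \emph{value} $B(b)-\min B$ via the reflection principle, is exactly what matches the $O(t^{-1/4+\varepsilon})$ error of the invariance principle.
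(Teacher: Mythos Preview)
Your argument is correct and follows essentially the same approach as the paper: reduce to $\mu(G^t(x)_0=-1)$ via Birkhoff, express this event through Lemma~\ref{lem:Min} as a minimum location for $S_x$, couple with Brownian motion via Corollary~\ref{thm:Brown}, and bound the probability that the Brownian motion is within $O(t^{-1/4+\varepsilon})$ of its running minimum. The only cosmetic differences are that the paper first treats the $(-1,0)$-GA and then reduces the general case via the identity $G'=\sigma^{-v_+}\circ G^{\,v_+-v_-}$ (whereas you work with general $(v_-,v_+)$ directly), and the paper passes by symmetry to $\mathbb P(\min_{[0,1]}B\geq -C_t)$ before using the density of $\min B$, while you apply the reflection principle directly to the half-normal law of $B(b)-\min_{[a,b]}B$; for the Bernoulli case the paper simply cites the survival-probability estimate from \cite{Redner} rather than invoking Sparre Andersen.
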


\begin{proof}
 When $\mu\in \Mix$, it is in particular $\s$-ergodic, and so are its images $\G^t\mu$. By Birkhoff's ergodic theorem, one has $d_-(G^t(x)) = \G^t\mu([-1]) = \mu(G^t(x)_0=-1)$ for $\mu$-almost all $x\in\{-1,0,1\}^\Z$.\sk

We first prove the theorem when $G$ is the $(-1,0)$-gliders automaton. By Lemma \ref{lem:Min},
\[\mu(G^t(x)_0= -1)=\mu\left(S_x(t+1) < \min_{[0,t]} S_x\right).\]
\paragraph{Equivalence ($\mu\in\Ber_=$):}~
By symmetry, \[\mu\left(S_x(t+1) < \min_{[0,t]} S_x\right) = \mu\left(S_x(0) < \min_{[1,t+1]} S_x\right),\] which is the probability that the random walk starting from 0 remains strictly positive during $t$ steps, also known as its probability of survival. According to \cite{Redner}, when the random walk is symmetric and the steps are independent, we have the equivalence
$\mu(G^t(x)_0=-1) \sim \frac1{\sqrt t}$. 
\paragraph{Upper bound:}
\[\mu\left(S_x(t+1) < \min_{[0,t]} S_x\right) \leq \mu\left(S_x^{t+1}(1) = \min_{[0,1]} S^{t+1}_x\right).\]

Using Corollary~\ref{thm:Brown}, we have:
\begin{align*}
\mu\left(S_x^{t+1}(1)=\min_{[0,1]} S^{t+1}_x\right) &= \mathbb P\left(S_{X'}^{t+1}(1)=\min_{[0,1]} S^{t+1}_{X'}\right)\\
&\leq \mathbb P\left(B_{t+1}(1)\leq\min_{[0,1]}B_{t+1}+C_{t+1}\right)\\
&\leq\mathbb P\left(B_{t+1}(0)\leq\min_{[0,1]}B_{t+1}+C_{t+1}\right),
\end{align*}
where $\displaystyle C_{t+1} = \sup_{[0,1]}\left|S_{X'}^{t+1}-B_{t+1}\right| = O\left(t^{-\frac 14+\varepsilon}\right)$ $\mathbb P$-almost surely, 
and where the third line is obtained by symmetry of the Brownian motion.

Furthermore $\displaystyle\mathbb P\left(\min_{[0,1]}B_{t+1}>-C_{t+1}\right) = \int_{-C_{t+1}}^0 e^{-x^2/2} dx \leq C_{t+1} = O \left(t^{-\frac
14+\varepsilon}\right)$.

\paragraph{General case (any $v_- < v_+$):} Let $G'$ be the $(v_-,v_+)$-GA. Then
\[G' = \s^{v_+}\circ G^{v_+-v_-}.\]
To conclude, it is enough to see that the particle density is $\s$-invariant and decreasing under the action of $G$.
\end{proof}

\subsection{Rate of convergence}\label{sec:rate}

In this section, we estimate the rate of convergence to the limit measure. For that we fix a distance on the space $\Msaz$ of $\s$-invariant measures, which induces the weak$^\ast$ topology:
\[\dm(\mu,\nu)=\sum_{n\in\N}\frac{1}{2^n}\max_{u\in\A^n}\left|\mu([u])-\nu([u])\right|.\]

\begin{theorem}[Rate of convergence to the limit measure]\label{thm:rate}

Let $G$ be the $(v_-,v_+)$-GA with initial measure $\mu\in\Mix$. Then:
\[ \forall \varepsilon>0,\ \dm(G^t_\ast\mu,\meas{0})=O\left(t^{-1/4+\varepsilon}\right)\] 

If furthermore $\mu\in\Ber_=$: \[\dm(G^t_\ast\mu,\meas{0})=\Omega\left(t^{-1/2}\right)\]
\end{theorem}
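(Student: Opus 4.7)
The strategy is to reduce the distance bound to the particle density bound already established in Theorem~\ref{thm:density}, exploiting that $\meas{0}$ puts all its mass on cylinders of the form $[0^n]$. The key observation is that for any $u\in\A^n$ different from $0^n$, the cylinder $[u]$ is contained in the set of configurations having a non-zero symbol at some fixed coordinate $i\in[0,n-1]$, so by $\s$-invariance
\[G^t_\ast\mu([u]) \leq G^t_\ast\mu([-1]) + G^t_\ast\mu([+1]).\]
Similarly, a union bound gives
\[\bigl|G^t_\ast\mu([0^n]) - 1\bigr| \leq n\bigl(G^t_\ast\mu([-1]) + G^t_\ast\mu([+1])\bigr).\]
Applying Birkhoff's ergodic theorem, $G^t_\ast\mu([\pm 1])$ equals $d_\pm(G^t(x))$ for $\mu$-almost all $x$, so Theorem~\ref{thm:density} yields $G^t_\ast\mu([\pm 1]) = O(t^{-1/4+\varepsilon})$ for any $\varepsilon>0$.

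Plugging these bounds into the definition of $\dm$, I obtain
\[\dm(G^t_\ast\mu,\meas{0}) \leq \sum_{n\in\N}\frac{n}{2^n}\cdot O\bigl(t^{-1/4+\varepsilon}\bigr) = O\bigl(t^{-1/4+\varepsilon}\bigr),\]
since $\sum_n n/2^n$ is a finite constant. This gives the upper bound.

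For the lower bound when $\mu\in\Ber_=$, I simply restrict to $n=1$ in the series defining $\dm$: the coefficient is $1/2$ and the word $u=-1$ achieves
\[\bigl|G^t_\ast\mu([-1]) - \meas{0}([-1])\bigr| = G^t_\ast\mu([-1]).\]
By the second part of Theorem~\ref{thm:density}, this is asymptotically equivalent to $t^{-1/2}$, so $\dm(G^t_\ast\mu,\meas{0}) \geq \tfrac{1}{2}G^t_\ast\mu([-1]) = \Omega(t^{-1/2})$. The argument is essentially a corollary of the density bounds, so no serious obstacle is expected; the only mild subtlety is keeping track of the linear factor $n$ introduced by the union bound for the cylinder $[0^n]$, which is harmless because $\sum_n n\,2^{-n}$ converges.
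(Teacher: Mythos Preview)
Your proof is correct. Both you and the paper first observe that $\max_{u\in\A^n}|G^t_\ast\mu([u])-\meas{0}([u])| = G^t_\ast\mu(\az\setminus[0^n])$, and both obtain the lower bound by isolating the $n=1$ term and invoking the density equivalent from Theorem~\ref{thm:density}. The upper bounds differ slightly in route: the paper re-enters the random walk and Brownian motion machinery, bounding $G^t_\ast\mu(\az\setminus[0^\ell])$ directly by $\mathbb P\bigl(\min_{[0,1]}B_t\geq -\ell/\sqrt t - C_t\bigr)$ and obtaining a per-term estimate of order $\ell/\sqrt t + t^{-1/4+\varepsilon}$ before summing against $2^{-\ell}$. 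Your union bound $G^t_\ast\mu(\az\setminus[0^n])\leq n\cdot G^t_\ast\mu(\az\setminus[0])$ followed by a direct citation of Theorem~\ref{thm:density} is a cleaner reduction: the result becomes a genuine corollary of the density estimate, and the linear factor $n$ is harmlessly absorbed by $\sum_n n\,2^{-n}<\infty$, exactly as you note. The paper's route gives a marginally sharper intermediate bound on each term, but this gains nothing after summation; your approach is shorter and loses nothing.
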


\begin{proof}
We first prove the theorem when $G$ is the $(-1,0)$-gliders automaton. By defining $0^\ell\in\A^\ell$ the word
containing only zeroes, the distance can be rewritten:
\[\forall t\in\N, \dm(G_\ast^t\mu,\meas{0}) = \sum_{\ell=1}^\infty\frac
1{2^\ell}G^t_\ast\mu\left(\az\backslash[0^\ell]\right).\]

\textbf{Lower bound when $\mu\in\Ber_=$:} $\dm(G_\ast^t\mu,\meas{0}) >
G^t_\ast\mu\left(\az\backslash[0]\right)$. We conclude with Theorem \ref{thm:density}.\

\textbf{Upper bound:} We give an upper bound for $G^t_\ast\mu(\az\backslash[0^\ell]) = \mu(\exists 0\leq d\leq \ell, G^t(x)_d=\pm1)$ for $\ell\in\N$
and $t\in\N$. By Lemma \ref{lem:Min},
\[\forall d\in\Z,\ G^t(x)_d = +1 \Leftrightarrow  S_x(d)<\min_{[d+1,d+t]}S_x.\]
Therefore:
\begin{align*}
  G^t_\ast\mu\left(\bigcap_{d=0}^\ell [+1]_d\right)&\leq \mu\left(\min_{[0,\ell]} S_x < \min_{[\ell+1,t]}S_x\right)\\
  &\leq \mu\left(\min_{[0,t]}S_x \geq -\ell\right)\\
  &\leq \mu\left(\min_{[0,1]}S^t_x \geq -\frac \ell{\sqrt t}\right)
\end{align*}  
By Corollary~\ref{thm:Brown}, using the same notations as in the previous proofs:  
\begin{align*}
  G^t_\ast\mu\left(\exists 0\leq d\leq \ell, x_d = +1\right) &\leq \mathbb P\left(\min_{[0,1]}S^t_{X'} \geq -\frac \ell{\sqrt t}\right)\\
  &\leq \mathbb P\left(\min_{[0,1]}B_t \geq -\frac \ell{\sqrt t} - C_t\right) \quad\quad\textrm{ where }C_t = O\left(t^{-\frac
14+\varepsilon}\right)\\
 &= O\left(t^{-\frac 14+\varepsilon}\right)
\end{align*}
for any $\varepsilon>0$, following the same calculations as in Section~\ref{sec:density}. The case of $-1$ particles is symmetrical, and we conclude.

\paragraph{General case:} Apply the same method as in the previous section, considering that $\dm$ and all
considered measures are $\s$-invariant and that any CA is Lipschitz w.r.t $\dm$.
\end{proof}

\subsection{Extension to other cellular automata}\label{sec:extensions}

\begin{definition}Let $F_1, F_2$ be two CAs on $\az$ and $\B^{\Z}$, respectively. We say that $F_1$
\define{factorises onto} $F_2$ if there exists a \define{factor} $\pi: \az\to \mathcal \B^{\Z}$ such that
$\pi\circ F_1 = F_2 \circ\pi$.\end{definition}

In particular, if $F_2$ admits a particle system $(\P, \pi_2, \phi)$, then $F_1$ admits a particle system with $(\P, \pi\circ\pi_2, \phi)$.

In this section, we extend the Theorems~\ref{thm:Entry} and \ref{thm:density} to automata that factorise onto a gliders automaton, and discuss conditions for the extension of Theorem~\ref{thm:rate}. In Section \ref{sec:defects}, we exhibited a general method to find such a factor using experimental intuition when such a factor is not obvious. In other words, we extend our results to automata that admit a particle system $(\P,\pi,\phi)$, where $\P = \{-1,+1\}$ and $\phi$ updates the particle positions similarly to a gliders automaton.\sk

In order to extend the theorem to such CAs, starting from an initial measure $\mu$, we must first ensure that $\pi_\ast\mu\in\Mix$. We show that the third condition in the definition of $\Mix$ is invariant under morphism.

\begin{proposition}
 Let $\pi: \az \to \bz$ be a morphism, $\mu\in\Msaz$ and $k>0$ any real such that $\sum_{n\geq 0}\alpha_\mu (n)^k <
\infty$. Then, $\sum_{n\geq 0}\alpha_{\pi_\ast\mu} (n)^k < \infty$.
\end{proposition}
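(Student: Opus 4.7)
The plan is to exploit the local (finite-radius) structure of $\pi$. Since $\pi$ is a morphism, by the definition in Section~\ref{sec:particlesdef} there is some radius $r\geq 0$ such that $\pi(x)_i$ depends only on $x_{[i-r,i+r]}$. This bounded-dependence property is exactly what is needed to show that applying $\pi$ can only shift the mixing coefficients by a bounded number of positions.

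First I would translate the definition of $\alpha_{\pi_\ast\mu}(n)$ back to $\mu$ via $\pi^{-1}$. For any $A'\in\Ba_{]-\infty,0]}$ in $\B^\Z$, the preimage $\pi^{-1}(A')$ lies in $\Ba_{]-\infty,r]}$ in $\az$, since $\pi(x)_i$ for $i\leq 0$ only involves coordinates $x_j$ for $j\leq r$. Symmetrically, for any $B'\in\Ba_{[n,+\infty[}$, we have $\pi^{-1}(B')\in\Ba_{[n-r,+\infty[}$. Using $\pi_\ast\mu(E)=\mu(\pi^{-1}E)$ and the fact that $\pi^{-1}$ preserves intersections, it follows that
\[ |\pi_\ast\mu(A'\cap B')-\pi_\ast\mu(A')\pi_\ast\mu(B')| = |\mu(\pi^{-1}A'\cap\pi^{-1}B')-\mu(\pi^{-1}A')\mu(\pi^{-1}B')|. \]
When $n>2r$, the algebras $\Ba_{]-\infty,r]}$ and $\Ba_{[n-r,+\infty[}$ are separated by a gap of $n-2r$, so taking suprema yields
\[ \alpha_{\pi_\ast\mu}(n)\leq \alpha_\mu(n-2r)\qquad\text{for all } n\geq 2r+1. \]

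Second, I would split the sum and use this bound:
\[ \sum_{n\geq 0}\alpha_{\pi_\ast\mu}(n)^k \leq (2r+1) + \sum_{n\geq 2r+1}\alpha_\mu(n-2r)^k = (2r+1)+\sum_{m\geq 1}\alpha_\mu(m)^k, \]
where the first finitely many terms are bounded trivially by $1$ (since mixing coefficients are at most $1$). The right-hand side is finite by hypothesis, which concludes the argument.

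There is no real obstacle here; the argument is essentially a bookkeeping one. The only point requiring care is verifying that $\pi^{-1}$ indeed maps $\Ba_{]-\infty,0]}$ into $\Ba_{]-\infty,r]}$ (and dually on the right), which follows from the local description of the morphism on generating cylinders and extends to the full sigma-algebras by a standard monotone-class argument.
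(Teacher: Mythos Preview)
Your proof is correct and follows essentially the same route as the paper: both use the radius $r$ of $\pi$ to show $\pi^{-1}\Ba_{]-\infty,0]}\subset\Ba_{]-\infty,r]}$ and $\pi^{-1}\Ba_{[n,+\infty[}\subset\Ba_{[n-r,+\infty[}$, then deduce $\alpha_{\pi_\ast\mu}(n)\leq\alpha_\mu(n-2r)$ (the paper invokes $\sigma$-invariance explicitly for this shift, which you use implicitly when passing from the gap to the coefficient). Your treatment of the finitely many initial terms is a detail the paper leaves to the reader.
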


\begin{proof}
We keep the notations from the definition of $\alpha_\mu(n)$. $\pi$ is defined by a local rule with neighbourhood $\Nb \subset [-r,r]$ for some $r>0$. Then, $\pi^{-1}\Ba_{]-\infty,0]} \subset \Ba_{]-\infty,r]}$ and $\pi^{-1}\Ba_{[n, +\infty[} \subset \Ba_{[n-r, +\infty[}$. By $\s$-invariance, we have for all $n$ $\alpha_{\pi_\ast\mu}(n) < \alpha_\mu(n-2r)$, and the result follows.
\end{proof}

Hence, if $\mu\in\Mix$, we only have to prove that $\pi_\ast\mu$ weighs evenly the sets of particles $-1$ and $+1$, and that the corresponding asymptotic variance is not zero. Under these assumptions, we can extend some of the previous results with the forbidden patterns playing the role of the particles.

\begin{corollary}
 Let $F:\az\to\az$ be a CA and $\mu\in\Msaz$. Suppose that $F$ factorises onto a $(v_-,v_+)$-GA via a factor $\pi$ such that $\pi_\ast\mu\in\Mix$.

Then Theorem~\ref{thm:Entry} and the first point of Theorem~\ref{thm:density} hold if we replace ``$x_k = \pm1$'' by ``$\pi(x)_k=\pm1$''.
\end{corollary}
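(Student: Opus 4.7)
The plan is to transfer the two results directly via the factor map. Since $\pi \circ F = G \circ \pi$ where $G$ is the $(v_-,v_+)$-gliders automaton, a straightforward induction gives $\pi \circ F^t = G^t \circ \pi$ for all $t \in \N$. Therefore every event of the form ``$\pi(F^{t}(x))_i = \pm 1$'' is equal to the event ``$G^{t}(\pi(x))_i = \pm 1$''. The modified entry times and particle densities in the statement are then, pointwise in $x$, the classical entry times and densities of $\pi(x)$ under the dynamics of $G$.

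Concretely, for the entry time, I would define $\widetilde{T}_n^-(x) = \min\{k\in\N : \exists i\in[0,|v_-|-1],\ \pi(F^{k+n}(x))_i = -1\}$ and observe that $\widetilde{T}_n^-(x) = T_n^-(\pi(x))$, where the right-hand side is the original entry time of Section~\ref{sec:entry} applied to $\pi(x)\in\{-1,0,+1\}^\Z$. For any $\alpha > 0$ we then have
\[
\mu\!\left(\frac{\widetilde{T}_n^-(x)}{n}\leq \alpha\right) = \pi_\ast\mu\!\left(\frac{T_n^-(y)}{n}\leq \alpha\right),
\]
and applying Theorem~\ref{thm:Entry} to $G$ with initial measure $\pi_\ast\mu\in\Mix$ gives the desired arctangent limit. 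For the density, I would set $\widetilde{d}_-(F^t(x)) = \freq(-1,\pi(F^t(x))) = d_-(G^t(\pi(x)))$; applying the first point of Theorem~\ref{thm:density} to $G$ with measure $\pi_\ast\mu$ yields $d_-(G^t(y)) = O(t^{-1/4+\varepsilon})$ for $\pi_\ast\mu$-a.e.~$y$, and the null set $N$ where this fails pulls back to $\pi^{-1}(N)$ which has $\mu$-measure zero by the definition of push-forward.

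The only subtle point is to make sure the hypotheses of the target theorems are legitimately satisfied under $\pi_\ast\mu$, which is granted by assumption. The proposition preceding the corollary ensures that the $\alpha$-mixing decay condition in the definition of $\Mix$ is automatically preserved by any morphism (and in particular by $\pi$), so the requirement $\pi_\ast\mu\in\Mix$ reduces in practice to verifying the mean-zero and nonzero-asymptotic-variance conditions on the image measure; these have to be checked case by case in applications but are explicit.

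Because the argument is essentially a pull-back through the semi-conjugacy, there is no genuine hard step: the only care needed is bookkeeping to ensure that almost-sure and in-probability statements transfer correctly under $\pi$, which is immediate from the identity $\mu(\pi^{-1}(A)) = \pi_\ast\mu(A)$. This also explains why the second point of Theorem~\ref{thm:density} (the $\Ber_=$ lower bound $\sim t^{-1/2}$) is not included: $\pi_\ast\mu$ is essentially never a Bernoulli measure when $\pi$ is a nontrivial block map, so that refined statement cannot be transferred by this method. A similar restriction applies to any extension of Theorem~\ref{thm:rate}, which is why it is only discussed informally here.
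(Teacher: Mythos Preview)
Your proposal is correct and is precisely the intended argument: the paper states this corollary without proof because it follows immediately from the semiconjugacy $\pi\circ F^t = G^t\circ\pi$ together with the push-forward identity $\mu(\pi^{-1}(A))=\pi_\ast\mu(A)$, exactly as you spell out. Your remarks on why the $\Ber_=$ lower bound and Theorem~\ref{thm:rate} do not transfer also match the paper's surrounding discussion.
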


Even if $\mu$ is a simple, e.g. Bernoulli measure, $\pi_\ast\mu$ can fail to satisfy the first and second condition of $\Mix$. We provide a counterexample at the end of this section.

\paragraph{Examples:}

\begin{figure}[!ht]
 \begin{center}
\includegraphics[width = 0.47\textwidth, trim = 0 0 0 50, clip]{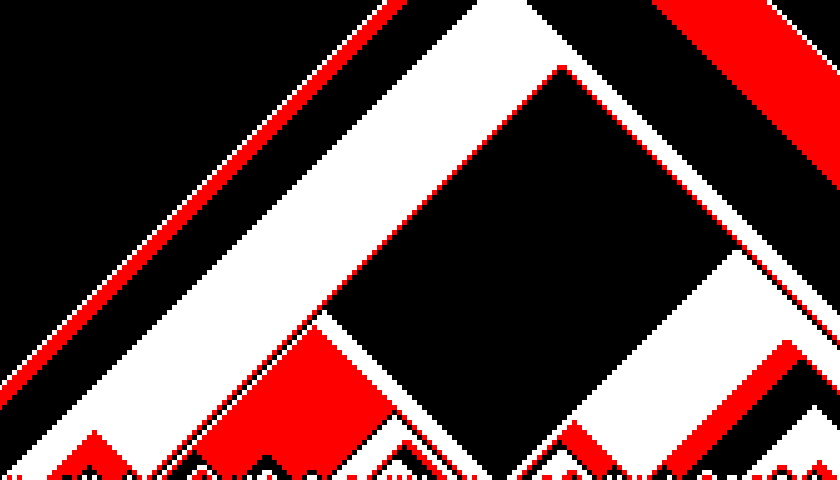}
\hspace{0.5cm}
\includegraphics[width = 0.47\textwidth, trim = 0 0 0 50, clip]{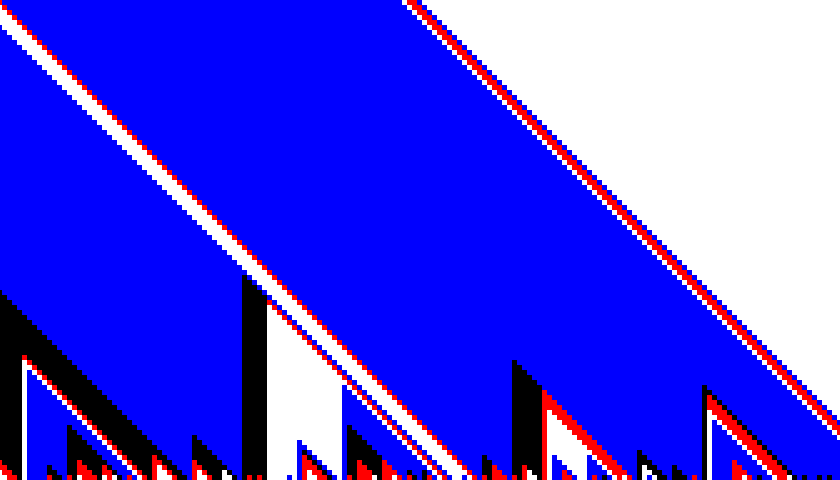}
\hspace{0.5cm}
\includegraphics[width = 0.47\textwidth, trim = 0 0 0 50, clip]{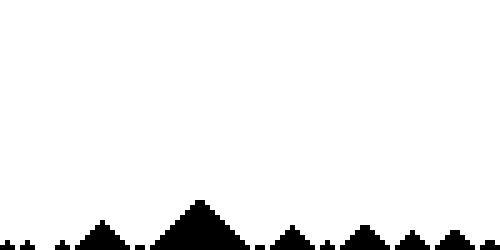}
 \end{center}
\caption{The 3-state cyclic CA, a one-sided captive CA and the product CA.}
\end{figure}

\begin{description}
\item[Traffic automaton] Let $\A = \{0,1\}$ and $F_{184}$ be the elementary CA corresponding to rule $\#184$ as defined in Section~\ref{section.184}.
$F_{184}$ factorises on the $(-1,+1)$-gliders automaton, using the factor introduced in that section:
\[\begin{array}{ccc}
   00 &\mapsto& +1\\
   11 &\mapsto& -1\\
   01,10 &\mapsto& 0
  \end{array}
\]

This factor is represented in Figure~\ref{fig:Factors}. If $\mu$ is a measure such
that $\pi_\ast\mu\in\Mix$, then Theorem~\ref{thm:Entry} and the first point of Theorem~\ref{thm:density} hold.

For example, this is true for the 2-step Markov measure defined by the
matrix $\left(\begin{matrix}p&1-p\\1-p&p\end{matrix}\right)$ and the eigenvector $\left(\begin{matrix}1/2\\
1/2\end{matrix}\right)$ with $p>0$. A particular case is the Bernoulli measure of parameters $(\frac 12,\frac 12)$.

The upper bound in Theorem~\ref{thm:rate} can also be extended by considering the fact that 

\[\dm(F_{184\ast}^t\mu, \meas{01}) \leq 2\sum_{n\in\N}\frac{1}{2^n}\pi_\ast F_{184\ast}^t\mu\left(\A^\Z \backslash [0^n]\right).\]
From there the upper bound can be obtained as in the original proof.

\item[3-state cyclic automaton] Let $\A = \Z/3\Z$ and $C_3$ be the 3-state cyclic automaton. We
consider the factor $\pi$ defined in Section~\ref{sec:particlesexamples}:
\[\begin{array}{cccl}
   ab &\mapsto& +1 &\mbox{ if } a=b+1\mod 3\\
   ab &\mapsto& -1 &\mbox{ if } a=b-1\mod 3\\
   ab &\mapsto& 0 &\mbox{ if } a=b
  \end{array}
\]
If $\mu$ is such that $\pi_\ast\mu\in\Mix$, then Theorem~\ref{thm:density} applies. This is true in particular when $\mu$ is any 2-step Markov measure defined by a matrix $(p_{ij})_{1\leq i,j\leq 3}$ satisfying $p_{01}+p_{12}+p_{20} = p_{10}+p_{21}+p_{02}$, all of these values being nonzero, with $(\mu_i)_{1\leq i\leq 3}$ its only eigenvector. This includes any nondegenerate Bernoulli measure. However, even when the limit measure is known (e.g. starting from the uniform measure),
Theorem~\ref{thm:rate} does not apply directly.

\item[One-sided captive automata] Let $F$ be any one-sided captive cellular automaton defined by a local rule $f$. 
As explained in Section \ref{sec:particlesexamples}, $F$ factorises onto the $(-1,0)$-gliders automaton with a factor defined by:
\[\begin{array}{cccl}
   ab &\mapsto& +1 &\mbox{ if } a\neq b, f(a,b)=a\\
   ab &\mapsto& -1 &\mbox{ if } a\neq b, f(a,b)=b\\
   ab &\mapsto& 0 &\mbox{ if } a=b
  \end{array}
\]
For an initial measure $\mu$, if
$\pi_\ast\mu\in\Mix$, then Theorem~\ref{thm:Entry} and the first point of Theorem~\ref{thm:density} apply.\sk

Notice that this class of automata contains the identity ($\forall a,b\in\A, f(a,b) = b$) and the shift $\s$ ($\forall
a,b\in\A, f(a,b) = a$). However, since we have in each case $\pi^{-1}(+1) = \emptyset$ or $\pi^{-1}(-1) = \emptyset$, it
is impossible to find an initial measure that weighs evenly each kind of particle, and so $\pi_\ast\mu$ cannot belong in
$\Mix$.
The limit measure, however, depends on the exact rule, and Theorem~\ref{thm:rate} does not apply directly.
\end{description}

\paragraph{Counter-example:}
\begin{description}
\item [Product automaton] Let $\A = \Z/2\Z$ and $F_{128}$ be the CA of neighbourhood $\{-1,0,1\}$ defined by the local
rule
$f(x_{-1},x_0,x_{1}) = x_{-1}\cdot x_0\cdot x_{1}$. Using the formalism from Section \ref{sec:defects}, we can see that
$F_{128}$ factorises onto the $(-1,1)$-GA by the factor
\[\pi:\left\{\begin{array}{ccc} 01&\to&+1\\
				10&\to&-1\\
				\textrm{otherwise}&\to&0
             \end{array}\right.
\]
If $\mu$ is any Bernoulli measure, then $\pi_\ast\mu$ satisfies all conditions of $\Mix$ except that $\s_\mu = 0$;
indeed, we can check that for $\pi_\ast\mu$-almost all configurations, the particles $+1$ and $-1$ alternate. Hence, only one particle can cross any given column after time 0, and therefore $\forall \alpha>0,\ \mu\left(\frac{T_n^-(x)}n\leq \alpha\right)\underset{n\to\infty}{\longrightarrow} 0$. Furthermore, any particle survives up to time $t$ only if it is the border of a initial cluster of black cells larger than $2t$ cells, which happens with a probability $\mu([1])^{2t}$ decreasing exponentially in $t$.
\end{description}

Even though we showed that the asymptotic distributions of entry times are known for some class of cellular automata and a large class of measures, this covers only very specific dynamics. It is not known how these results extend for more than 2 particles and/or other kinds of particle interaction. In particular, there is no obvious stochastic process characterising the behaviour of such automata that would play the role of $S_x$ in our proofs.


\subsection*{Acknowledgements}
This work was partially supported by the ANR project QuasiCool (ANR-12-JS02-011-01) and the ANR project Valet (ANR-13-JS01-0010). The first author acknowledges the financial support of Basal project No. PFB-03 CMM, Universidad de Chile. We also thank two anonymous referees for their careful reading and many remarks.

\newpage
\bibliographystyle{alpha}
\bibliography{Biblio}

\end{document}